\newtheorem{thm}{Theorem}[section]
\newtheorem{prop}[thm]{Proposition}
\newtheorem{lem}[thm]{Lemma}
\newtheorem{cor}[thm]{Corollary}
\newtheorem{definition}[thm]{Definition}
\newenvironment{dfn}{\medskip\refstepcounter{thm}
\noindent{\bf Definition \thesection.\arabic{thm}\ }}{\medskip}
\def\eq#1{{\rm(\ref{#1})}}
\newenvironment{proof}[1][,]{\medskip\ifcat,#1
\noindent{{\it Proof}:\ }\else\noindent{\it Proof of #1.\ }\fi}
{\hfill$\square$\medskip}
\newenvironment{remark}[1][Remark]{\begin{trivlist}
\item[\hskip \labelsep {\bfseries #1}]}{\end{trivlist}}
\DeclareMathOperator\vol{vol}
\DeclareMathOperator\GG{G}
\DeclareMathOperator\GL{GL}
\DeclareMathOperator\SO{SO}
\DeclareMathOperator\U{U}
\DeclareMathOperator\SU{SU}
\DeclareMathOperator\Sp{Sp}
\DeclareMathOperator\Diff{Diff}
\DeclareMathOperator\Ric{Ric}
\DeclareMathOperator\Ree{Re}
\DeclareMathOperator\Imm{Im}
\DeclareMathOperator\Vol{Vol}
\DeclareMathOperator\TRL{\mathcal{T}}
\DeclareMathOperator\tnabla{\widetilde{\nabla}}
\def\d{{\rm d}}
\def\w{\wedge}
\DeclareMathOperator\tr{tr}
\def\C{\mathbb{C}}
\def\R{\mathbb{R}}
\begin{document}

\title{From Lagrangian to totally real geometry: coupled flows
and calibrations}

\author{Jason D. Lotay \\ j.lotay@ucl.ac.uk \and Tommaso Pacini \\tommaso.pacini@unito.it}


\maketitle

\begin{abstract}
 We show that the properties of Lagrangian mean curvature flow are a special case of a more general phenomenon, concerning couplings between geometric flows of the ambient space 
and of totally real submanifolds. Both flows are driven by ambient Ricci curvature or, in the non-K\"ahler case, by its analogues. To this end we explore the geometry of totally 
real submanifolds, defining (i) a new geometric flow in terms of the ambient canonical bundle, (ii) a modified volume functional, further studied in \cite{LotayPacini2}, which takes into account the totally real condition. We discuss short-time existence for our flow and show it couples well with the Streets--Tian symplectic curvature flow for almost K\"ahler manifolds.
We also discuss possible applications to  
Lagrangian submanifolds and calibrated geometry.
\end{abstract}

\section{Introduction}

Lagrangian mean curvature flow has received much attention in the past fifteen years. Various directions have been pursued: intrinsic issues (maximal time existence and singularity formation \textit{e.g.}~\cite{Neves}, solitons \textit{e.g.}~\cite{IJO, JLT, LotayNeves}), applications to symplectic geometry (\textit{e.g.}~symplectic diffeomorphisms \cite{MedosWang}) and connections to mirror symmetry (analogies with Hermitian Yang--Mills \cite{ThomasYau}, relations to Bridgeland stability \cite{Joy}). 

The key observation from which this line of research originated is the following: under appropriate assumptions, a submanifold which is initially Lagrangian will remain Lagrangian when evolving under mean curvature flow (MCF), cf.~Theorem \ref{LMCF.KE.thm}.

Although this fact has been known for some time, the starting point for this paper is that this result is, in some sense, rather surprising. Indeed, MCF is a purely Riemannian concept and thus can be applied to any submanifold in an attempt to deform it to a minimal one. As a result, there is a priori no reason to hope that it preserves special properties of the submanifold, 
especially if those properties originate within a different branch of geometry.

The Lagrangian condition is an excellent example, as it has no metric content: it belongs to the world of symplectic geometry. Given a symplectic manifold 
$(M^{2n},\overline{\omega})$, a submanifold $\iota:L^n\rightarrow M$ is \textit{Lagrangian} if $\omega:=\iota^*\overline{\omega}\equiv 0$. This notion plays a key 
role in mechanics, symplectic topology and mirror symmetry.

The secret to the above relationship between Lagrangians and MCF lies  in the ``appropriate assumptions''. The obvious assumption is the ambient manifold should be K\"ahler: this is the standard setting for interactions between Riemannian and symplectic geometry. It turns out however that this assumption, by itself, is not sufficient. A curvature assumption is also necessary: the above result is true only in K\"ahler--Einstein (KE) manifolds, cf.~Theorem \ref{LMCF.KRF.thm}.

The KE assumption is very strong. Nevertheless, it contains Ricci-flat K\"ahler and thus Calabi--Yau (CY) manifolds as a subclass, 
and this is one reason for interest in Lagrangian MCF. Indeed, CY manifolds and their Lagrangian submanifolds have attracted much attention in connection with string theory and for their role in mirror symmetry. There are also direct links to a natural class of  volume-minimizing submanifolds within the context of calibrated geometry.

\paragraph{Coupled flows.} It is an interesting question whether one can weaken the KE assumption. It is a much less well known fact due to Smoczyk \cite{Smo} that, if the ambient manifold is only K\"ahler and not necessarily KE, one should couple the MCF equation for the submanifolds with the K\"ahler--Ricci flow (KRF) on the ambient space; this system of equations yields a flow on submanifolds which preserves the Lagrangian condition. 

It is curious that Smoczyk's result has received little attention.  Consider the following. 
\begin{itemize}
\item In recent years there has been increasing evidence \cite{McCTopp,Muller} that coupling two geometric flows can lead each to exhibit better properties than it had by itself, 
both from the geometric and the analytic perspective. 
Analytic properties of the coupling of MCF with Ricci flow  have been studied in \cite{Lott,MMT}. 
\item Perelman's work has made it obvious that Ricci flow is currently the most interesting ambient geometric flow available. Recent 
work on the existence of K\"ahler--Einstein metrics and on the Minimal Model Program provides good motivation to study KRF, which additionally has been shown to exhibit some good long-time existence properties. 
\item For submanifolds, the obvious flow is MCF. However, it was perhaps most clearly pointed out by Oh \cite{Oh2} that, in a K\"ahler manifold, MCF of a Lagrangian $L$ can be viewed as being driven by the ambient Ricci curvature $\overline{\mbox{Ric}}$: the mean curvature vector $H$ on $L$ is equivalent to the 1-form $\xi:=\overline{\omega}(H,\cdot)$ on $L$, and $\d\xi=\overline{\mbox{Ric}}(J\cdot,\cdot)$ (where $J$ is the complex structure). 
\end{itemize}

The above should already convince us of the intrinsic value of Smoczyk's result. It also indicates why the result might be true. The variation of $\omega$ depends on
 $\overline{\mbox{Ric}}$, explaining the role of the geometric coupling: the two contributions of the Ricci curvature ultimately cancel each other out, 
 leaving $\omega$ unchanged.

We now add a further consideration. In the past few years several generalizations of KRF have been suggested, reducing the integrability assumptions on the initial geometric structures. It is important to question which of these provide the most promising avenues for further investigation. There are standard reasons to prefer one flow over another: geometric motivations for the definition, or a proof of short-time existence. We wish to suggest an addition to this list. 

\ 

\noindent\textit{Test}: Does the flow couple profitably with another? 

\ 

If so, it seems definitely worthwhile to investigate it further. Notice that this test may also help identify geometrically interesting lower-order perturbations of a given flow. If the flow is (weakly) parabolic, such terms do not affect the short-time existence theory: in this sense they are not analytically detectable. In this paper, our geometrically motivated lower-order terms turn out to play an important role even in the existence theory.

\paragraph{Totally real submanifolds.} Lagrangians are a special case of a much wider family of submanifolds called \textit{totally real submanifolds}: 
those which are ``maximally non-complex'', where ``maximal'' also refers to the dimension of the
submanifold. Totally real submanifolds are a key part in the proof that MCF preserves Lagrangians. However, the standard proof down-plays them, simply viewing them as possible degenerations of Lagrangians to be ruled out. 

More generally, totally real submanifolds seem to have received only sporadic interest, \textit{e.g.}~\cite{Bor,ChenOg}. Once again this is curious, considering that by definition they are the ``exact opposite'' of the most classical class of submanifolds: complex subvarieties. It may be that this lack of interest is due to several factors. 
\begin{itemize}
 \item The suspicion that the defining condition is simply too weak: it is an open condition in the space of immersions, so the class of totally real submanifolds is huge.
 \item Pseudo-holomorphic curves whose boundary is contained in a totally real submanifold constitute a well-defined elliptic problem, but to obtain good compactness properties for such curves one requires that the boundary lies in a Lagrangian. This property helps make Lagrangians and pseudo-holomorphic curves a key tool in symplectic topology.
 \item The most straightforward analogue of Smoczyk's result fails for general totally real submanifolds: specifically, if one couples MCF of a totally real, but not Lagrangian, submanifold with KRF, the initial values of $\omega:=\iota^*\overline{\omega}$ are usually not preserved.
 \item In some contexts, \textit{e.g.}~when working with homotopy classes of immersions, the difference between Lagrangian and totally real is irrelevant: 
this is a consequence of the validity of the ``h-principle'', cf.~\cite{HP} for an application. 
\end{itemize}

\paragraph{Results on totally real submanifolds.} 
In Sections \ref{s:totally_real}-\ref{s:Jvol} we try to counter the above impression by showing that totally real submanifolds carry interesting geometry which is hard to notice when one restricts to Lagrangians. Specifically, we demonstrate the following.
\begin{itemize}
 \item Totally real submanifolds $L$ in $(M,J)$ can be characterized in terms of the canonical bundle $K_M$. This leads to an intrinsic notion of volume of $L$, called the \textit{$J$-volume}, and to a natural $1$-form $\xi_J$ on $L$, which we call the \textit{Maslov form}.
 \item The gradient of the $J$-volume and the Maslov 1-form are linked by a key formula which, in the simplified K\"ahler case, takes the form
 $$\bar\omega(H_J,\cdot)_{|TL}=\xi_J,\ \ \ \mbox{d}\xi_{J|TL}=\overline{\mbox{Ric}}(J\cdot,\cdot)_{|TL}$$
 for any totally real $L$, where $H_J$ is an explicit vector field along $L$. 
 \item The above data defines two natural flows on totally real submanifolds: the gradient flow of the $J$-volume functional and the \textit{Maslov flow} (MF). These flows are in general distinct and, in the presence of a Riemannian structure, different also from MCF. However, we show in Section \ref{s:Jvol} that all three coincide for Lagrangians in 
 K\"ahler manifolds, recovering the standard theory.
\end{itemize}
We refer to these flows as ``canonical'', both because they are generated by the canonical bundle of $M$ and because of links to other aspects of the geometry of totally real submanifolds, studied in \cite{LotayPacini2}: in particular, our $J$-volume functional turns out to have interesting convexity properties with respect to an intrinsic notion of geodesics on the space of totally real submanifolds.

\paragraph{Results on coupled flows.} Having introduced these flows, we need to meet our own standards of ``what makes a new flow interesting''. Our main results concern the Maslov flow, which thus becomes the focus of the paper. 

Analytically, the main issue we address is short time existence. 
This turns out to be  a rather subtle question due to the degeneracies of the operators involved. 
We give several results in this direction, but the general picture remains open. Our proofs rely on Hamilton's version of the Nash--Moser inverse function theorem concerning operators satisfying an ``integrability condition''. In our case this condition is provided by the key formula introduced above: this is a new twist on the use of this formula, even in the classical case of Lagrangians in K\"ahler manifolds. This method thus emphasizes the link between the existence theory and preserved quantities.

Geometrically, the main point is the existence of interesting couplings. We explain how the Maslov flow interacts with another ``new entry'': \textit{symplectic curvature flow} (SCF), introduced by Streets--Tian in \cite{StrTian}. Our Theorem \ref{MSCF.thm} shows that the coupled system SCF with MF preserves not just the Lagrangian condition, but any given initial values of $\omega$ on any initial totally real submanifold. Once again, the main tool is the above mentioned key formula.

To explain this more carefully, however, we need another digression. The context for SCF is that of \textit{almost K\"ahler} manifolds. This class, which vastly generalizes that of K\"ahler manifolds, has been studied at least since the 1950s, but mainly by researchers interested in ``geometric structures with torsion'' \cite{ApDr,GrayHervella}.  The point of view we wish to press here is that the above results indicate that almost K\"ahler manifolds should also have a role to play in geometric analysis. In particular, we observe the following.
\begin{itemize}
 \item The K\"ahler condition is very strong: so strong that, by forcing several quantities to vanish, it ends up obfuscating some interesting geometry.
 \item Almost K\"ahler manifolds essentially coincide with the symplectic category: in the past decades symplectic topology has flourished, and now has a large amount of specific techniques at its disposal.
 \item Our Theorem \ref{MSCF.thm} expresses a new reason for interest in both SCF and in MF, thus in almost K\"ahler manifolds.
 \end{itemize}
 
\paragraph{Conclusions on Lagrangian MCF.}We can now return to our original question: how does MCF manage to preserve Lagrangian submanifolds?

Our answer, simply put, is that this is purely accidental. More explicitly, at the
 end of Section \ref{ss:existence_lagr_mcf} we emphasize that it is the 
 combination of the Lagrangian and K\"ahler assumptions which force the result 
 to hold simply by deleting all terms that would prevent it from being true. 
 
 The main goal of this paper is to show that these terms encode interesting aspects of the submanifolds' geometry. 
It follows that Lagrangian MCF is not really about the Riemannian volume. It is about a different geometry 
encoded by the Maslov form, which coincides with MCF only in restricted situations. By replacing Lagrangians with totally reals 
we bring to light, in this context, the role of the complex structure.

\paragraph{Results within Calabi--Yau manifolds.} Sections \ref{s:maslov_CY}-\ref{s:calibration} of this paper are dedicated to the case of Calabi--Yau manifolds. 
Here, there was already a classical notion of Maslov form for Lagrangians. We show that our own notion is a generalization of that one  to the much 
wider context of totally real submanifolds in almost complex manifolds. We also provide a link with calibrated geometry by showing that the critical points of our flows 
may be interpreted as a generalized version of calibrated submanifolds, called \emph{special totally real} (STR). This implies that they are automatically absolute 
$J$-volume minimizers.

\paragraph{Open problems.} Several issues seem worthy of further investigation.

\smallskip
\textit{Stronger existence results.} Our current result, cf. Corollary \ref{Lag.MF.exist.cor}, concerns the short-time existence of solutions to the Maslov flow under two assumptions: (i) the initial submanifold is Lagrangian, (ii) the  almost K\"ahler manifold satisfies a certain Einstein-like constraint (formulated in terms of the Chern connection).  More generally, cf.~Theorem \ref{thm:existence_projectedeq}, we can prove existence for any initial totally real submanifold and any almost K\"ahler manifold if we perturb the flow slightly (still preserving the desired ``integrability condition'', but sacrificing the geometric motivation of the flow). The main open question is whether the Maslov flow exists for any totally real submanifold, under coupling with SCF (so as to remove the Einstein constraint).

\smallskip
\textit{Applications to Lagrangian geometry.} One may speculate on the extent to which our results on totally real submanifolds may be relevant to the study of Lagrangians. 
For example, in Section \ref{ss:CYdevelopments} we consider potential applications to the study of minimal Lagrangians in Calabi--Yau manifolds and relations to ideas introduced in \cite{Don,ThomasYau}. The negative KE direction is pursued further in \cite{LotayPacini2, LotayPacini3}. In this context, our Proposition \ref{min.Lag.lim.prop} shows that replacing the standard volume functional with the $J$-volume serves to ``weed out'' any non-Lagrangian critical points, thus helping to focus only on minimal Lagrangians.

\smallskip
\textit{$\GG_2$ geometry and gauge theory.} Another direction concerns possible analogues within $\GG_2$ geometry, where \textit{coassociative} submanifolds play the role of Lagrangian submanifolds. This analogy actually provided the initial motivation for this paper.  One can also propose coupled flows in the context of $\GG_2$ gauge theory.

\smallskip
Clearly there are possible analogues also in the context of Legendrian submanifolds within Sasaki manifolds, cf.~\cite{Kawai}, \cite{SmoLeg} for results in this direction.

Some of the above is work in progress by the authors.

\paragraph{Relation to previous literature.} 
Our point of view on Lagrangian MCF is perhaps most closely related to that originally put forth by Oh \cite{Oh2}, but goes far beyond that. There is also some intersection with \cite{SmoWang}, which generalizes previous results concerning Lagrangian MCF to the almost K\"ahler setting. However, it focuses only on Lagrangians and pre-dates the work of Streets--Tian, so it does not make use of any coupling with symplectic curvature flow. 

Our main debt is towards Borrelli \cite{Bor}, who first extended the notion of Maslov class from Lagrangians to totally real submanifolds. He also introduces the notion of $J$-volume and of special totally real submanifolds; accordingly, we adopt his terminology. The setting he works in however is mostly $\C^n$, sometimes K\"ahler manifolds, and he does not pursue geometric flows.

\paragraph{Summary.} We conclude this introduction by briefly 
summarising the contents of the article.
\begin{itemize}
\item In $\S$\ref{review} we review the standard viewpoint on Lagrangian mean curvature flow.  We use this as an opportunity to highlight key points and  introduce notation and basic geometric notions which we use throughout the article.  
\item In $\S$\ref{s:totally_real} we study totally real submanifolds in the general setting  
 and define the $J$-volume and Maslov form.  
\item In $\S$\ref{s:hermitian} we specialise to almost Hermitian manifolds
 where we relate our $J$-volume and Maslov form to the ambient Riemannian 
 geometry. We identify the Maslov form with a vector field which provides the natural substitute for the mean curvature vector field, and make further observations in the almost K\"ahler and K\"ahler  settings.  
\item In $\S$\ref{s:Jvol} we investigate the properties of the critical points of the $J$-volume, using the first variation formula computed in \cite{LotayPacini2}. This formula also serves to introduce the $J$-mean curvature flow, \textit{i.e.}~the negative gradient flow of the $J$-volume.
\item In $\S$\ref{s:MF} we study the Maslov flow in almost K\"ahler manifolds and show that it couples well with symplectic curvature flow, cf.~Theorem \ref{MSCF.thm}. In K\"ahler manifolds, the Maslov flow coincides with the $J$-mean curvature flow. 
We examine the long-time
behaviour of the Maslov flow in the K\"ahler--Einstein setting and relate it to other known flows for Lagrangians.
\item In $\S$\ref{s:shorttime} we investigate the short-time existence and uniqueness of the Maslov flow by computing the symbol of the flow operator and applying theory due to Hamilton \cite{Hamilton} that invokes the Nash--Moser inverse function theorem.  
\item In $\S$\ref{s:maslov_CY} we review the classical notion of Maslov form in Calabi--Yau manifolds and relate it to our study, generalizing the known 
concept and giving a further characterisation for critical points of the $J$-volume functional.
\item In $\S$\ref{s:calibration} we generalize the calibrated geometry of special Lagrangians in Calabi--Yau manifolds to 
totally real submanifolds, thus providing further results for critical points of our flows.  We also relate our results here to stability and moduli 
space questions for special Lagrangians, and to graphs of maps between 
almost complex manifolds.
\end{itemize}

\ 

\textit{Thanks to} Dominic Joyce in particular for invaluable discussions.  We would also like to thank Alberto Abbondandolo, Antonio Ache, Leonardo Biliotti, 
Jonny Evans and Luigi Vezzoni for useful conversations and Vincent Borrelli for providing a copy of his PhD thesis. We further thank Denis 
Auroux, Johannes Nordstr\"om, Jake Solomon and Richard Thomas for comments on a preliminary version of this paper. 

JDL was partially supported by an EPSRC Career Acceleration Fellowship. TP is grateful to Oxford University for its hospitality during some stages of this project and to a Marie Curie reintegration grant for funding.

\section{Review of Lagrangian MCF}\label{review}

Given any Riemannian manifold $M$ and any immersion $\iota:L\rightarrow M$ of an oriented manifold $L$, we say that a one-parameter family of immersions $\iota_t:L\rightarrow M$ such that $\iota_0=\iota$ satisfies
mean curvature flow (MCF) if 
\begin{equation}\label{MCF.eq}
\frac{\partial\iota_t}{\partial t}=H_t,
\end{equation}
where $H_t$ is the mean curvature vector of the immersion $\iota_t$.  
When $L$ is compact, this flow is the negative gradient flow of the volume functional; stationary points for MCF are minimal submanifolds.  

Suppose $M$ also has a symplectic structure (so $M$ is $2n$-dimensional), which is given by a closed non-degenerate 2-form $\overline{\omega}$, and that $\iota:L\rightarrow M$ is
\emph{Lagrangian}; i.e.~that $L$ is $n$-dimensional  (half the dimension of $M$) and $\iota^*\overline{\omega}=0$. If $L$ moves by MCF, then in general 
$\iota_t:L\rightarrow M$ is no longer Lagrangian.  However, there are 
situations where MCF does indeed preserve the 
Lagrangian condition, leading to the notion of \textit{Lagrangian mean curvature flow}.  

The aim of this section is to review the fact that Lagrangians are preserved by MCF in K\"ahler-Einstein manifolds, and 
also in K\"ahler manifolds when coupled with K\"ahler--Ricci flow. This result is originally due to Smoczyk \cite{Smo}.  However, the key calculation goes at least 
as far back as \cite{Daz} and the ideas behind deforming Lagrangians in KE manifolds are certainly discussed in \cite{Oh, Oh2}.

For this section we suppose that $M$ is a K\"ahler manifold, \textit{i.e.} a complex $n$-manifold with 
 a Riemannian metric $\overline{g}$ with Levi-Civita connection $\overline{\nabla}$, 
an orthogonal complex structure $J$ and
a 2-form $\overline{\omega}$ given by $\overline{\omega}(X,Y)=\overline{g}(JX,Y)$ which is nondegenerate and satisfies $\d\overline{\omega}=0$.
We let $\overline{R}$ and $\overline{\Ric}$ denote the Riemann curvature tensor and Ricci tensor of $\overline{g}$ and define
$$
\overline{\rho}(X,Y)=\overline{\Ric}(JX,Y).
$$
The 2-form $\overline{\rho}$ is the natural way to view the Ricci tensor as a $(1,1)$-form on $M$ and 
we have that the first Chern class of $M$ is given by $2\pi c_1(M)=[\overline{\rho}]$.    We may easily express the K\"ahler--Einstein (KE) condition as $\overline{\rho}=\lambda\overline{\omega}$ for a constant $\lambda$, and we can write
 K\"ahler--Ricci flow (KRF) as:
\begin{equation}\label{KRF.eq}
\frac{\partial\overline{\omega}_t}{\partial t}=-\overline{\rho}_t.
\end{equation}
An equivalent definition of KRF
is to have the metric evolve by Ricci flow and define the K\"ahler form to be compatible with the evolving metric and fixed complex structure.

\begin{remark}
KE manifolds are solitons for KRF; that is, they are either stationary, shrinking or expanding solutions depending on 
whether $\lambda$ is $0$, negative or positive.  One can also defined \emph{normalized} K\"ahler--Ricci flow by 
\begin{equation}\label{NKRF.eq}
\frac{\partial\overline{\omega}_t}{\partial t}=-\overline{\rho}_t+\lambda\overline{\omega}_t,
\end{equation}
so that its stationary points are precisely the KE manifolds with given constant $\lambda$, which is usually taken to be in $\{-1,0,1\}$.  
We can determine which constant $\lambda$ to take a priori by noting whether $c_1(M)$ is negative, zero or positive.  It is known by 
work of Cao \cite{Cao} that if $M$ is compact with $c_1(M)\leq 0$ then the normalized K\"ahler--Ricci flow exists for all time and converges to the 
appropriate KE metric.  The case $c_1(M)>0$ (the case of Fano manifolds) is still open in general but should be related to so-called \emph{K-stability}, given the fact
 that K-stability of compact Fano manifolds is equivalent to the existence of a KE metric, as shown in work by Chen--Donaldson--Sun, Tian et al.
\end{remark}

The result we want to review is the following, cf.~\cite{Smo}.

\begin{thm}\label{main.LMCF.thm}
Let $\iota:L\rightarrow M$ be a compact Lagrangian submanifold of a 
K\"ahler manifold $(M,J,\overline{\omega})$.
\begin{itemize}
\item  If $M$ is K\"ahler--Einstein and $\iota_t:L\rightarrow M$ 
satisfies MCF 
with $\iota_0=\iota$ 
then 
$\iota_t:L\rightarrow M$ is Lagrangian for all $t>0$ for which the flow exists.
\item If $\overline{\omega}_t$ satisfies K\"ahler--Ricci flow 
with $\overline{\omega}_0=\overline{\omega}$ and 
$\iota_t:L\rightarrow (M,J,\overline{\omega}_t)$ satisfies MCF  
with $\iota_0=\iota$ 
then  
$\iota_t:L\rightarrow (M,J,\overline{\omega}_t)$ is Lagrangian for all $t>0$ for which the flow exists.
\end{itemize}
\end{thm}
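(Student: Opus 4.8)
The plan is to track the pulled-back form $\omega_t := \iota_t^*\overline{\omega}_t$ on $L$ and to show that the condition $\omega_t\equiv 0$ is preserved under the flow. Since each $\overline{\omega}_t$ is closed, the formula for the time derivative of a pullback under a family of immersions with velocity $H_t$, combined with Cartan's formula $\mathcal{L}_{H_t}\overline{\omega}_t=\d\big(\overline{\omega}_t(H_t,\,\cdot\,)\big)$ (using $\d\overline{\omega}_t=0$), yields
\[ \frac{\partial}{\partial t}\,\omega_t \;=\; \iota_t^*\!\left(\frac{\partial\overline{\omega}_t}{\partial t}\right)+\d\xi_t,\qquad \xi_t:=\iota_t^*\big(\overline{\omega}_t(H_t,\,\cdot\,)\big). \]
In the first (KE) case $\overline{\omega}_t$ is fixed, so $\tfrac{\partial}{\partial t}\omega_t=\d\xi_t$; in the second (coupled) case \eq{KRF.eq} gives $\tfrac{\partial}{\partial t}\omega_t=\d\xi_t-\iota_t^*\overline{\rho}_t$. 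Note also that $\d\omega_t=\iota_t^*\d\overline{\omega}_t=0$ throughout, so $\omega_t$ remains closed.

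The geometric mechanism behind preservation is the formula of Oh: for a \emph{Lagrangian} immersion one has $\d\xi_t=\iota_t^*\overline{\rho}_t$, the mean curvature $1$-form being the pullback of the Ricci form. Granting this, the coupled case gives $\tfrac{\partial}{\partial t}\omega_t=\iota_t^*\overline{\rho}_t-\iota_t^*\overline{\rho}_t=0$: the two appearances of the ambient Ricci curvature, one from the motion of the submanifold and one from the motion of the metric, cancel exactly. In the KE case, substituting $\overline{\rho}_t=\lambda\overline{\omega}_t$ turns Oh's formula into $\d\xi_t=\lambda\omega_t$, so that $\tfrac{\partial}{\partial t}\omega_t=\lambda\omega_t$, a pointwise linear equation with $\omega_0=0$. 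Either way one is led to expect $\omega_t\equiv 0$.

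The main obstacle is that this reasoning is circular: Oh's formula presupposes that $L$ is Lagrangian, which is precisely what we wish to show is maintained. To make the argument rigorous one must compute $\d\xi_t$ for a general totally real immersion and isolate its deviation from $\iota_t^*\overline{\rho}_t$. The expectation, and the crux of the proof, is that this deviation is a second-order linear expression in $\omega_t$, with coefficients built from the second fundamental form and the ambient curvature, so that the evolution becomes a genuinely parabolic linear system
\[ \frac{\partial}{\partial t}\,\omega_t=\Delta_t\,\omega_t+\ell_t(\omega_t), \]
where $\Delta_t$ is the (rough or Hodge) Laplacian of the induced metric $g_t=\iota_t^*\overline{g}_t$ and $\ell_t$ is zeroth order and linear. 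This is consistent in terms of orders of differentiation: $H_t$ is second order in $\iota_t$, hence $\d\xi_t$ is third order, matching $\Delta_t\omega_t$ since $\omega_t$ is first order in $\iota_t$. Granting parabolicity, the conclusion follows: applying the maximum principle to $|\omega_t|^2_{g_t}$ yields an inequality $\tfrac{\partial}{\partial t}|\omega_t|^2\le\Delta_t|\omega_t|^2+C|\omega_t|^2$, and since $|\omega_0|^2=0$ on the compact manifold $L$ we obtain $\omega_t\equiv 0$ for as long as the flow exists; uniqueness for linear parabolic systems would serve equally well. The real work therefore lies in the generalized Oh computation and in verifying the parabolic structure of the correction term $\d\xi_t-\iota_t^*\overline{\rho}_t$.
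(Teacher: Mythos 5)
Your proposal follows essentially the same route as the paper: the ``generalized Oh computation'' you defer is exactly what the paper's Propositions \ref{H.hook.omega}, \ref{xi.prop} and \ref{d.H.hook.omega.prop} carry out, yielding $\iota_t^*\d(H_t\lrcorner\overline{\omega}_t)=\rho_t-\nabla_t^*\nabla_t\omega_t+C_t\lrcorner\omega_t$ with $C_t$ a zeroth-order tensor built from the second fundamental form and the curvatures, after which the paper runs the same maximum-principle argument on $f=g_t(\omega_t,\omega_t)$. The only points to flag are that your sketch leaves that central computation unverified (it is the bulk of the work, using the Weitzenb\"ock formula on $-\d\d^*\omega$ together with Lemmas \ref{omega.perp.lem} and \ref{rho.lem} to convert the $\overline{R}$- and $A$-terms into contractions with $\omega$), and that differentiating $|\omega_t|^2_{g_t}$ produces an extra term $\left(\frac{\partial}{\partial t}g_t\right)(\omega_t,\omega_t)$ which your constant $C$ silently absorbs and which the paper bounds by $Bf$ using compactness of $L$ and of the time interval.
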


\noindent  The first part of Theorem \ref{main.LMCF.thm} shows that Lagrangian MCF is a well-defined concept.  We should emphasise in the second part that the mean curvature vector $H_t$ at time $t$ is calculated using the ambient metric $\overline{g}_t$ at time $t$ determined by the Ricci flow, so in this sense we have coupled MCF with KRF. In other words, we treat the two equations as a system. Notice however that the coupling is only partial: the ambient flow does not depend on the flow of the submanifold.

Hopefully, our coordinate-free presentation of this result will be a useful complement to the existing literature. It should also serve to emphasize how to group together 
the many terms which in a coordinate-based expression would appear in the formulae, so as to obtain well-defined tensors. Studying the role of these  tensors will be 
a key part of our subsequent improvement on this result.

\subsection{Totally real submanifolds in K\"ahler manifolds}

Let $L$ be an orientable (real) $n$-manifold and let $\iota:L\rightarrow M$ be an immersion. Let us identify $L$ with its image $\iota(L)\subset M$. The relationship $\overline{\omega}(\cdot,\cdot)=\overline{g}(J\cdot,\cdot)$ shows that $\iota$ (or, using the identification, $L$) is Lagrangian if and only if, for all $p\in L$, we have that $J(T_pL)=(T_pL)^\perp$, the normal space. This yields the orthogonal splitting
$$T_pM=T_pL 
\stackrel{\perp}{\oplus} J(T_pL).$$
More generally, we say that $\iota$ (or $L$) is \emph{totally real} if, for all $p\in L$,
$$J(T_pL)\cap T_pL=\{0\}.$$
In this case we still get a splitting of $T_pM$ as above, though it is not necessarily orthogonal.
We can thus write any vector field $Z$ on a totally real $L$ uniquely as $Z=X+JY$ where $X,Y$ are tangent vector fields.  Notice that if we define projections $\pi_L,\pi_J$ by
 $\pi_L(Z)=X$ and $\pi_J(Z)=JY$ then we have the following.
 
 \begin{lem}\label{proj.lem}
 $\pi_L\circ J=J\circ\pi_J$ and $J\circ\pi_L=\pi_J\circ J$.
\end{lem}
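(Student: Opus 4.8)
The plan is to reduce everything to the pointwise linear-algebra statement at a single $p\in L$ and then verify it by a one-line computation using $J^2=-\mathrm{id}$. First I would recall why the projections are even well defined: the totally real condition $J(T_pL)\cap T_pL=\{0\}$, together with $\dim_{\R}T_pL=n$ and $\dim_{\R}T_pM=2n$, forces the sum $T_pL+J(T_pL)$ to be direct and to exhaust $T_pM$. This is exactly what guarantees that every $Z$ decomposes uniquely as $Z=X+JY$ with $X,Y\in T_pL$, so that $\pi_L(Z)=X$ and $\pi_J(Z)=JY$ make sense as bundle maps.

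The key observation I would exploit is that $J$ interchanges the two summands of the splitting: by definition it sends $T_pL$ into $J(T_pL)$, and it sends $J(T_pL)$ back into $T_pL$, since $J\big(J(T_pL)\big)=J^2(T_pL)=-T_pL=T_pL$. Concretely, writing $Z=X+JY$ and applying $J$ gives
$$JZ=JX+J(JY)=JX-Y=(-Y)+J(X),$$
where now $-Y\in T_pL$ and $J(X)\in J(T_pL)$. Reading off the two components of $JZ$ in the splitting then yields $\pi_L(JZ)=-Y$ and $\pi_J(JZ)=JX$.

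It only remains to compare these with the claimed right-hand sides. For the first identity, $J\big(\pi_J(Z)\big)=J(JY)=-Y=\pi_L(JZ)=(\pi_L\circ J)(Z)$. For the second, $J\big(\pi_L(Z)\big)=JX=\pi_J(JZ)=(\pi_J\circ J)(Z)$. Since $p$ and $Z$ were arbitrary, both equalities hold as identities of maps.

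There is no serious obstacle here; the only point requiring a moment's care is the well-definedness of the projections, that is, checking that the totally real hypothesis genuinely upgrades $T_pL+J(T_pL)$ to a direct-sum decomposition of the entire tangent space, which is where the dimension count enters. Once that is in place the two formulas are immediate from $J^2=-\mathrm{id}$, and they simply encode the fact that $J$ swaps the roles of $\pi_L$ and $\pi_J$.
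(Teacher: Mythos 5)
Your proof is correct and follows essentially the same route as the paper: write $Z=X+JY$, compute $JZ=JX-Y=(-Y)+J(X)$, and read off the two components of the splitting. The extra remarks on why the totally real condition makes the projections well defined are fine but not needed beyond what the paper already establishes before stating the lemma.
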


\begin{proof} We calculate
$$\pi_L(JZ)=\pi_L(JX-Y)=-Y=J\pi_J(Z)$$
and
$$J\pi_L(Z)=J\pi_L(X+JY)=JX=\pi_J(JX-Y)=\pi_J(JZ).$$
The result follows.
\end{proof}

For Lagrangian MCF the key object to study is 
$\omega=\iota^*\overline{\omega}$, which we want to show stays 
zero along the flow if it is initially zero.  
To achieve this we need some basic objects.

We let
 $g=\iota^*\overline{g}$ and $\nabla$ be the Levi-Civita connection of $g$. 
 We let the second fundamental form of $L$ be given by
$$A(X,Y)=\overline{\nabla}_XY-\nabla_XY$$
for vector fields $X,Y$ on $L$, which takes values in the normal bundle $NL=(TL)^{\perp}$ of $L$ because $\nabla$ is the tangential part of $\overline{\nabla}$ on $L$.  We 
let $H$ be the mean curvature vector of $L$, which is given at $p\in L$ by:
$$H(p)=\overline{\nabla}_{e_i}e_i-\nabla_{e_i}e_i$$
where $\{e_1,\ldots,e_n\}$ is an orthonormal basis of $T_pL$ and we will always sum over repeated indices.  

We now have a general lemma, which shows that we can evaluate $\overline{\omega}$ on $NL$ in terms of $\omega$.  
We let $\pi_{\rm T}$ and $\pi_{\perp}$ denote the tangential and normal projections of a vector field along $L$.

\begin{lem}\label{omega.perp.lem}
Let $Z,W$ be normal vector fields on $L$.  Since $L$ is totally real there exist unique tangent vector fields $X,Y$ on $L$ such that $Z=\pi_{\perp}(JX)$ and $W=\pi_{\perp}(JY)$.   Then 
$$\overline{\omega}(Z,W)=\omega(\pi_{\rm T}(JX),\pi_{\rm T}(JY))-\omega(X,Y).$$
\end{lem}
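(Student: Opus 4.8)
The plan is to reduce everything to the algebraic fact that $\overline{\omega}(JU,JV)=\overline{\omega}(U,V)$ for all vectors $U,V$, which holds because $J$ is $\overline{g}$-orthogonal and $\overline{\omega}(\cdot,\cdot)=\overline{g}(J\cdot,\cdot)$, together with the orthogonality between $T_pL$ and $N_pL$. First I would justify the existence and uniqueness of $X$ and $Y$: the linear map $T_pL\to N_pL$, $X\mapsto\pi_{\perp}(JX)$, has trivial kernel, since $\pi_{\perp}(JX)=0$ forces $JX\in T_pL\cap J(T_pL)=\{0\}$ by total reality and hence $X=0$; as $\dim T_pL=\dim N_pL=n$, this map is an isomorphism, so given normal $Z,W$ the required $X,Y$ are uniquely determined.

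Next I would record the tangent--normal decompositions $JX=\pi_{\rm T}(JX)+Z$ and $JY=\pi_{\rm T}(JY)+W$, valid because $Z=\pi_{\perp}(JX)$ and $W=\pi_{\perp}(JY)$ by hypothesis. Applying the $J$-invariance identity gives $\overline{\omega}(X,Y)=\overline{\omega}(JX,JY)$, and I would expand the right-hand side bilinearly against these decompositions into a purely tangential term $\overline{\omega}(\pi_{\rm T}(JX),\pi_{\rm T}(JY))$, a purely normal term $\overline{\omega}(Z,W)$, and two mixed terms $\overline{\omega}(\pi_{\rm T}(JX),W)$ and $\overline{\omega}(Z,\pi_{\rm T}(JY))$.

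The heart of the argument is to control these two mixed terms. I would rewrite $\pi_{\rm T}(JX)=JX-Z$ and $\pi_{\rm T}(JY)=JY-W$, which turns each mixed term into a difference involving $\overline{\omega}(JX,W)$, $\overline{\omega}(Z,JY)$, and $\overline{\omega}(Z,W)$. Here $\overline{\omega}(JX,W)=\overline{g}(-X,W)=0$ and $\overline{\omega}(Z,JY)=\overline{g}(Z,Y)=0$, each vanishing because it pairs a tangent vector against a normal one after using $J^2=-1$ or the orthogonality of $J$. The two mixed terms therefore each contribute $-\overline{\omega}(Z,W)$, and collecting all contributions yields $\overline{\omega}(X,Y)=\overline{\omega}(\pi_{\rm T}(JX),\pi_{\rm T}(JY))-\overline{\omega}(Z,W)$. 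Since $X,Y,\pi_{\rm T}(JX),\pi_{\rm T}(JY)$ are all tangent, every such pairing of $\overline{\omega}$ equals the corresponding value of $\omega=\iota^*\overline{\omega}$, and rearranging gives the claimed formula.

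I do not expect a genuine obstacle here; the only point demanding care is the bookkeeping of signs, namely checking that the two mixed terms each contribute $-\overline{\omega}(Z,W)$ so that together with the direct normal term they produce a net coefficient of $-1$ rather than cancelling. Everything rests on the single compatibility identity $\overline{\omega}(JU,JV)=\overline{\omega}(U,V)$ and the orthogonal decomposition $T_pM=T_pL\oplus N_pL$.
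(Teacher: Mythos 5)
Your proof is correct and is essentially the same computation as in the paper: both arguments rest on the $J$-invariance $\overline{\omega}(JU,JV)=\overline{\omega}(U,V)$ and the vanishing of $\overline{\omega}$-pairings that reduce to $\overline{g}$-pairings of a tangent against a normal vector, the only difference being that you expand $\overline{\omega}(JX,JY)$ over the decompositions $JX=\pi_{\rm T}(JX)+Z$, $JY=\pi_{\rm T}(JY)+W$, whereas the paper expands $\overline{\omega}(Z,W)$ over $Z=JX-\pi_{\rm T}(JX)$, $W=JY-\pi_{\rm T}(JY)$. Your added justification of the existence and uniqueness of $X,Y$ via the injectivity of $X\mapsto\pi_{\perp}(JX)$ is correct and a reasonable supplement to what the paper leaves implicit.
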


\begin{proof} This is an elementary calculation:
\begin{align*}
 \overline{\omega}&(\pi_{\perp}(JX),\pi_{\perp}(JY))\\
&=\overline{\omega}(JX-\pi_{\rm T}(JX),JY-\pi_{\rm T}(JY))\\
&=\overline{\omega}(JX,JY)-\overline{g}(J\circ\pi_{\rm T}(JX),JY)+\overline{g}(X,\pi_{\rm T}(JY))+\omega(\pi_{\rm T}(JX),\pi_{\rm T}(JY))\\
&=\omega(X,Y)-\overline{g}(JX,Y)+\overline{g}(X,JY)+\omega(\pi_{\rm T}(JX),\pi_{\rm T}(JY))\\
&=\omega(\pi_{\rm T}(JX),\pi_{\rm T}(JY))-\omega(X,Y),
\end{align*}
 from which the result follows.
\end{proof}

As we have seen in \eq{KRF.eq}, the 2-form $\overline{\rho}$ defines the motion of $\overline{\omega}$ by KRF, so 
it is clearly important to understand how $\overline{\rho}$ restricts to the submanifold $L$.  This is the content of the
following elementary lemma.  Recall that we let $\{e_1,\ldots,e_n\}$ denote an orthonormal basis for $T_pL$ for a 
point $p\in L$.

\begin{lem}\label{rho.lem}
Let $\rho=\iota^*\overline{\rho}$. Then
\begin{align*}
 \rho(X,Y)=\overline{\omega}(\pi_J\overline{R}(X,Y)e_i,e_i).
\end{align*}
\end{lem}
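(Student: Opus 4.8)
The plan is to reduce the statement to the standard fact that, on a Kähler manifold, the Ricci form $\overline{\rho}(X,Y)=\overline{\Ric}(JX,Y)$ is, up to a factor, the trace of the curvature operator $\overline{R}(X,Y)$ viewed as a complex-linear endomorphism of $(T_pM,J)$. First I would recall why $\overline{R}(X,Y)$ is $\C$-linear: since $M$ is Kähler, $J$ is $\overline{\nabla}$-parallel, so $\overline{R}(X,Y)$ commutes with $J$. The input I need is the identity
\[
\overline{\rho}(X,Y)=\tfrac12\sum_{a=1}^{2n}\overline{g}(\overline{R}(X,Y)Jf_a,f_a)=\tfrac12\sum_{a=1}^{2n}\overline{\omega}(\overline{R}(X,Y)f_a,f_a),
\]
valid for any $\overline{g}$-orthonormal basis $\{f_a\}$ of $T_pM$; this is a standard consequence of the first Bianchi identity together with the Kähler symmetries of $\overline{R}$ and the $J$-invariance of $\overline{\Ric}$, and I would either cite it or insert the short Bianchi computation. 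In invariant terms it says $\overline{\rho}(X,Y)=-\operatorname{Im}\operatorname{tr}_{\C}\overline{R}(X,Y)$, minus the imaginary part of the complex trace.

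The one genuinely new step is the choice of frame. Because $L$ is totally real, an $\overline{g}$-orthonormal basis $\{e_1,\dots,e_n\}$ of $T_pL$ is simultaneously a complex basis of $(T_pM,J)$ over $\C$: this is exactly the condition $J(T_pL)\cap T_pL=\{0\}$. Since the trace of a complex-linear operator may be computed in any complex frame together with its dual coframe, I would compute $\operatorname{tr}_{\C}\overline{R}(X,Y)$ using $\{e_i\}$ rather than an orthonormal frame of $T_pM$. Decomposing $\overline{R}(X,Y)e_i=\pi_L(\overline{R}(X,Y)e_i)+\pi_J(\overline{R}(X,Y)e_i)$ in the splitting $T_pM=T_pL\oplus J(T_pL)$ and writing $\pi_J\overline{R}(X,Y)e_i=\sum_j b_{ij}Je_j$, the imaginary part of the $i$-th diagonal complex matrix entry is precisely the coefficient $b_{ii}$.

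It then remains to extract this coefficient using $\overline{\omega}$. From $\overline{\omega}(Je_j,e_i)=\overline{g}(J^2e_j,e_i)=-\delta_{ij}$, which holds because $\{e_i\}$ is orthonormal in $T_pL$, one obtains $\overline{\omega}(\pi_J\overline{R}(X,Y)e_i,e_i)=-\sum_i b_{ii}=-\operatorname{Im}\operatorname{tr}_{\C}\overline{R}(X,Y)$ after summing the repeated index $i$. Combining with $\overline{\rho}=-\operatorname{Im}\operatorname{tr}_{\C}\overline{R}$ gives $\rho(X,Y)=\iota^*\overline{\rho}(X,Y)=\overline{\omega}(\pi_J\overline{R}(X,Y)e_i,e_i)$, which is the claim. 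Lemma \ref{proj.lem} is convenient if one prefers to carry out the extraction invariantly, commuting $J$ past $\pi_J$ via $J\circ\pi_J=\pi_L\circ J$ rather than in components.

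The main obstacle is conceptual rather than computational: the splitting $T_pM=T_pL\oplus J(T_pL)$ fails to be $\overline{g}$-orthogonal when $L$ is merely totally real (orthogonality holds only in the Lagrangian case), so one cannot simply restrict the orthonormal trace $\tfrac12\sum_a\overline{g}(\overline{R}(X,Y)Jf_a,f_a)$ to a frame adapted to $T_pL$. The resolution is to recognize the Ricci form as a complex trace, which is independent of the choice of complex frame and its dual; pairing against $\overline{\omega}$ through the projection $\pi_J$ is exactly the device that isolates the diagonal imaginary parts and is insensitive to the off-diagonal failure of orthogonality.
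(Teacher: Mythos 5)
Your proof is correct and follows essentially the same route as the paper: both first establish the Bianchi-identity consequence $\overline{\rho}(X,Y)=\tfrac12\overline{\omega}(\overline{R}(X,Y)\overline{e}_j,\overline{e}_j)$ and then evaluate this trace in the frame adapted to the totally real splitting. The only difference is one of packaging: where you invoke frame-independence of the complex trace of the $J$-commuting operator $\overline{R}(X,Y)$ and read off the diagonal imaginary parts via $\overline{\omega}(Je_j,e_i)=-\delta_{ij}$, the paper computes the same real trace in the non-orthonormal basis $\{e_1,\ldots,e_n,Je_1,\ldots,Je_n\}$ using dual-basis projections and Lemma \ref{proj.lem}.
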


\begin{proof}
Given $p\in L$ and an orthonormal basis $\{\overline{e_1},\ldots,\overline{e}_{2n}\}$ for $T_pM$, we see that $\{J\overline{e_1},\ldots,J\overline{e_{2n}}\}$ is an orthonormal basis for $T_pM$ so
\begin{align*}
 \overline{\rho}(X,Y)&=\overline{\Ric}(JX,Y)\\
&=\overline{\Ric}(Y,JX)\\
&=\overline{g}(\overline{R}(Y,J\overline{e_j})J\overline{e_j},JX)\\
&=\overline{g}(\overline{R}(Y,J\overline{e_j})\overline{e_j},X),
\end{align*}
using the fact that $\overline{\nabla}J=0$ (which is part of the K\"ahler condition).  Thus, using the Bianchi identity we have that:
\begin{align*}
 \overline{\rho}(X,Y)&=-\overline{g}(\overline{R}(Y,X)J\overline{e_j},\overline{e_j})-\overline{g}(\overline{R}(Y,\overline{e_j})X,J\overline{e_j})\\
&=\overline{\omega}(\overline{R}(X,Y)\overline{e_j},\overline{e_j})+\overline{g}(\overline{R}(Y,\overline{e_j})J\overline{e_j},X)\\
&=\overline{\omega}(\overline{R}(X,Y)\overline{e_j},\overline{e_j})-\overline{\Ric}(Y,JX).
\end{align*}
Hence,
\begin{align}\label{overline.rho.eq}
 \overline{\rho}(X,Y)=\frac{1}{2}\overline{\omega}(\overline{R}(X,Y)\overline{e_j},\overline{e_j}).
\end{align}
From this we can deduce that 
\begin{align*}
 \rho(X,Y)&=\frac{1}{2}\overline{\omega}(\overline{R}(X,Y)\overline{e_j},\overline{e_j})\\
&=\frac{1}{2}\overline{g}(\pi_L\circ J\overline{R}(X,Y)e_i,e_i)+\frac{1}{2}\overline{g}(\pi_J\circ J\overline{R}(X,Y)Je_i,Je_i)
\end{align*}
since we can take a trace with respect to a basis of unit vectors for $T_pM$ which is not orthonormal (namely, $\{e_1,\ldots,e_n,Je_1,\ldots, Je_n\}$) by taking appropriate projections.
Hence, by Lemma \ref{proj.lem},
\begin{align*}
\rho(X,Y)&=\frac{1}{2}\overline{g}(J\circ\pi_J\overline{R}(X,Y)e_i,e_i)-\frac{1}{2}\overline{g}(\pi_J\overline{R}(X,Y)e_i,Je_i)\\
&=\overline{\omega}(\pi_J\overline{R}(X,Y)e_i,e_i),
\end{align*}
again using the fact that $\overline{\nabla}J=0$.
\end{proof}

\subsection{The mean curvature vector and the K\"ahler form}\label{kahler_mean_curvature}

We want to compute how $\omega$ varies along MCF, so we need to calculate $\mathcal{L}_{H}\overline{\omega}$
on $L$.  By Cartan's formula and the fact that $\overline{\omega}$ is closed, we see that $\mathcal{L}_{H}\overline{\omega}=\d(H\lrcorner\overline{\omega})$.  Hence,
the first thing we need to calculate is the interior product of $H$ and $\overline{\omega}$.    Again we use an orthonormal basis $\{e_1,\ldots,e_n\}$ for $T_pL$.

\begin{prop}\label{H.hook.omega} For tangent vectors to $L$,
\begin{equation}\label{H.hook.omega.eq}
\overline{\omega}(H,X)=-\d^*\omega(X)-\overline{\omega}\big(e_i,A(e_i,X)\big).
\end{equation}
\end{prop}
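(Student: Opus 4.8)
The plan is to expand $\overline{\omega}(H,X)$ directly from the definition $H=\overline{\nabla}_{e_i}e_i-\nabla_{e_i}e_i$, exploiting the K\"ahler identity $\overline{\nabla}\,\overline{\omega}=0$ to trade covariant derivatives of the frame for ordinary derivatives of the functions $\overline{\omega}(e_i,X)$. Splitting off the tangential piece at once, and using that $\nabla_{e_i}e_i$ is tangent so that $\overline{\omega}(\nabla_{e_i}e_i,X)=\omega(\nabla_{e_i}e_i,X)$, I would write
\[
\overline{\omega}(H,X)=\overline{\omega}(\overline{\nabla}_{e_i}e_i,X)-\omega(\nabla_{e_i}e_i,X).
\]

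Next I would rewrite the leading term. Since $\overline{\omega}$ is parallel,
\[
e_i\big(\overline{\omega}(e_i,X)\big)=\overline{\omega}(\overline{\nabla}_{e_i}e_i,X)+\overline{\omega}(e_i,\overline{\nabla}_{e_i}X),
\]
so that $\overline{\omega}(\overline{\nabla}_{e_i}e_i,X)=e_i(\omega(e_i,X))-\overline{\omega}(e_i,\overline{\nabla}_{e_i}X)$, using $\overline{\omega}(e_i,X)=\omega(e_i,X)$. I would then decompose $\overline{\nabla}_{e_i}X=\nabla_{e_i}X+A(e_i,X)$ into its tangential and normal parts: the tangential part contributes $\omega(e_i,\nabla_{e_i}X)$, while the normal part contributes precisely the term $\overline{\omega}(e_i,A(e_i,X))$ appearing in the statement.

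Collecting terms, the three purely tangential contributions are
\[
e_i\big(\omega(e_i,X)\big)-\omega(\nabla_{e_i}e_i,X)-\omega(e_i,\nabla_{e_i}X),
\]
which is exactly the covariant expansion of $(\nabla_{e_i}\omega)(e_i,X)$; by the standard formula for the codifferential of a $2$-form this equals $-\d^*\omega(X)$, and the claimed identity follows. I expect the computation itself to be entirely routine; the only point requiring genuine care is this last identification, where one must match the sign convention $\d^*\omega(X)=-(\nabla_{e_i}\omega)(e_i,X)$ with the conventions in force (equivalently, one may verify it in a geodesic frame at the point). Note that the argument is purely pointwise and uses only $\overline{\nabla}\,\overline{\omega}=0$ together with the totally real splitting of $TM$, so neither the Lagrangian condition nor any Einstein-type hypothesis enters at this stage; in particular the formula holds verbatim for any totally real $L$.
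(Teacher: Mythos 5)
Your proof is correct and follows essentially the same route as the paper's: expand $\overline{\omega}(\overline{\nabla}_{e_i}e_i,X)$ using $\overline{\nabla}\,\overline{\omega}=0$, pass to intrinsic derivatives of $\omega$ on $L$, and recognise the three tangential terms as $(\nabla_{e_i}\omega)(e_i,X)=-\d^*\omega(X)$ while the normal part of $\overline{\nabla}_{e_i}X$ yields $A(e_i,X)$. The only difference is bookkeeping (you peel off the tangential piece of $H$ at the start rather than at the end), and your sign convention for $\d^*$ matches the one used in the paper.
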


\begin{remark}
This result effectively expresses a formula for commuting derivatives with contractions, as can be seen from the proof below.
\end{remark}

\begin{proof}
Let $p\in L$, let $X\in T_pL$ and let $\{e_1,\ldots,e_n\}$ be an 
orthonormal basis for $T_pL$. We have:
\begin{align}\label{cont.eq.1}
\overline{\omega}(\overline{\nabla}_{e_i}e_i,X)
&=\overline{\nabla}_{e_i}\big(\overline{\omega}(e_i,X)\big)-(\overline{\nabla}_{e_i}\overline{\omega})(e_i,X)-\overline{\omega}(e_i,\overline{\nabla}_{e_i}X).
\end{align}
Since $M$ is K\"ahler we know that $\overline{\nabla}\overline{\omega}=0$ so the second term vanishes.  The first term is a derivative of a function on $L$, so we can replace $\overline{\omega}$
by $\omega$ and $\overline{\nabla}$ by $\nabla$ in \eq{cont.eq.1} leading to:
\begin{align}\label{cont.eq.2}
\overline{\omega}(\overline{\nabla}_{e_i}e_i,X)
&=\nabla_{e_i}\big(\omega(e_i,X)\big)-\overline{\omega}(e_i,\overline{\nabla}_{e_i}X).
\end{align}
Now we can use the formula for the codifferential to show that:
\begin{align}
 \d^*\omega(X)&=-\big((e_i\lrcorner\nabla_{e_i})\omega\big)(X)\nonumber\\
&=-(\nabla_{e_i}\omega)(e_i,X)\nonumber\\
&=-\nabla_{e_i}\big(\omega(e_i,X)\big)+\omega(\nabla_{e_i}e_i,X)+\omega(e_i,\nabla_{e_i}X).\label{cont.eq.3}
\end{align}
Hence, we can go back to \eq{cont.eq.2} and substitute in \eq{cont.eq.3}:
\begin{align*}
 \overline{\omega}(\overline{\nabla}_{e_i}e_i,X)&=-\d^*\omega(X)+\omega(\nabla_{e_i}e_i,X)+\omega(e_i,\nabla_{e_i}X)-\overline{\omega}(e_i,\overline{\nabla}_{e_i}X).
\end{align*}
Rearranging we have that
\begin{align*}
 \overline{\omega}(\overline{\nabla}_{e_i}e_i-\nabla_{e_i}e_i,X)&=-\d^*\omega(X)-\overline{\omega}(e_i,\overline{\nabla}_{e_i}X-\nabla_{e_i}X),
\end{align*}
so the result follows.
\end{proof}

\begin{remark}
 Notice that this proposition does not use the totally real condition and that the K\"ahler condition is crucially used to remove a term involving the derivative of $\overline{\omega}$.  
 \end{remark}
 
Let us now define a 1-form $\xi$ on $L$ by:
\begin{equation}\label{old.xi.eq}
\xi(X)=-\overline{\omega}\big(e_i,A(e_i,X)\big),
\end{equation}
which is the second and significant term on the right-hand side of \eq{H.hook.omega.eq}.

\begin{remark}
 Notice that we can rewrite 
\begin{equation}\label{xi.trace.eq}
\xi(X)=-\overline{\omega}(e_i,A(X,e_i))=-\overline{g}(Je_i,\pi_{\perp}\overline{\nabla}_Xe_i)=\tr_L(J\pi_{\perp}\overline{\nabla}_X).
\end{equation}
 This will be significant in later sections.
\end{remark} 

Our next step is to differentiate \eq{H.hook.omega.eq} in Proposition \ref{H.hook.omega}.  
We can deal with the first term on the right-hand side
using the well-known Weitzenb\"ock formula and the fact that $\d\omega=0$ so that $\Delta\omega=(\d\d^*+\d^*\d)\omega=\d\d^*\omega$.

\begin{lem} We have that
\begin{equation}\label{Weitz.eq}
\d\d^*\omega=\nabla^*\nabla\omega+\frac{1}{2}R(e_i,e_j)\omega\cdot e_i^*\cdot e_j^*,
\end{equation}
where $R$ is the Riemann curvature tensor of $g$ extended to act on forms, $e_i^*$ is the dual covector to $e_i$ and $\cdot$ denotes Clifford multiplication; i.e.~
$$\xi\cdot e_i^*=(-1)^k(e_i^*\w\xi+e_i\lrcorner\xi)$$
for a $k$-form $\xi$. 
\end{lem}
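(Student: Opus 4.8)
The identity is a form of the classical Weitzenb\"ock (Bochner--Lichnerowicz) formula for the Hodge--de Rham Laplacian, specialised to the closed form $\omega$. The plan is to express $\d$ and $\d^*$ through the Levi-Civita connection, compute $\Delta=\d\d^*+\d^*\d$, isolate the curvature contribution, and finally invoke $\d\omega=0$. Throughout I work at a fixed point $p\in L$ in a local orthonormal frame $\{e_i\}$ geodesic at $p$, i.e.\ with $\nabla_{e_i}e_j|_p=0$; this kills all terms involving derivatives of the frame and leaves only the essential contributions. In such a frame one has the pointwise expressions $\d=e_i^*\w\nabla_{e_i}$ and $\d^*=-e_i\lrcorner\nabla_{e_i}$ (summing over the frame), and $\nabla^*\nabla=-\nabla_{e_i}\nabla_{e_i}$ at $p$, consistent with the formula for $\d^*$ already used in the proof of Proposition \ref{H.hook.omega}.

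First I would compute the two halves of the Laplacian at $p$. Using $\nabla e_i|_p=0$ one gets $\d\d^*\alpha=-e_j^*\w e_i\lrcorner\nabla_{e_j}\nabla_{e_i}\alpha$ and $\d^*\d\alpha=-e_i\lrcorner(e_j^*\w\nabla_{e_i}\nabla_{e_j}\alpha)$. Adding these and applying the anticommutation identity $e_i\lrcorner(e_j^*\w\beta)+e_j^*\w(e_i\lrcorner\beta)=\delta_{ij}\beta$ splits the sum into a Bochner term $-\nabla_{e_i}\nabla_{e_i}\alpha=\nabla^*\nabla\alpha$ and a remainder $-e_j^*\w e_i\lrcorner(\nabla_{e_j}\nabla_{e_i}-\nabla_{e_i}\nabla_{e_j})\alpha$. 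Since $[e_i,e_j]|_p=0$, the commutator of covariant derivatives is exactly the curvature operator $R(e_j,e_i)$ acting on forms, so the remainder is the Weitzenb\"ock curvature term written in exterior-algebra form.

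The final step is to rewrite this curvature term using Clifford multiplication. The point is that $\d+\d^*$ is the Dirac operator for the Clifford action $c(e_i^*)=e_i^*\w\,\cdot\,+\,e_i\lrcorner\,\cdot$ on $\Lambda^*T^*L$, so that $(\d+\d^*)^2=\d\d^*+\d^*\d=\Delta$, and the general Lichnerowicz presentation of the curvature term is $\frac{1}{2}R(e_i,e_j)\omega\cdot e_i^*\cdot e_j^*$ with $\cdot$ denoting Clifford multiplication as fixed in the statement. Matching the wedge/interior-product remainder above to this Clifford expression is a purely algebraic check using $\xi\cdot e_i^*=(-1)^k(e_i^*\w\xi+e_i\lrcorner\xi)$ together with the antisymmetry $R(e_i,e_j)=-R(e_j,e_i)$. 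Finally, because $\d\omega=0$ we have $\d^*\d\omega=0$ and hence $\Delta\omega=\d\d^*\omega$, which yields the stated identity. I expect the only genuine obstacle to be the bookkeeping in this last step: tracking signs and index orderings in passing from the exterior-algebra form of the curvature term to the Clifford form, and confirming agreement with the particular $(-1)^k$ sign convention fixed in the statement.
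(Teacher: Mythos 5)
The paper offers no proof of this lemma, invoking it as the classical Weitzenb\"ock formula, and your proposal is precisely the standard derivation of that formula (geodesic frame, pointwise expressions $\d=e_i^*\w\nabla_{e_i}$ and $\d^*=-e_i\lrcorner\nabla_{e_i}$, the anticommutation identity, and the curvature commutator), followed by $\d\omega=0$; this is correct and consistent with what the paper intends. One cosmetic slip: the de Rham Dirac operator is $\d+\d^*=\sum_i c(e_i^*)\nabla_{e_i}$ with $c(e_i^*)=e_i^*\w\,\cdot\,-\,e_i\lrcorner\,\cdot$ (not $+$), but your actual computation never uses that identity, and the remaining task of matching the remainder $e_j^*\w e_i\lrcorner R(e_i,e_j)\omega$ to the paper's right-action convention $\xi\cdot e_i^*=(-1)^k(e_i^*\w\xi+e_i\lrcorner\xi)$ is exactly the finite algebraic check you already flag.
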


  We next 
prove the following important formula for $\d\xi$.

\begin{prop}\label{xi.prop}  For tangent vectors $X,Y$ to $L$,
$$\d\xi(X,Y)=\overline{\omega}\big(\overline{R}(X,Y)e_i,e_i\big)-\omega(R(X,Y)e_i,e_i)-2\overline{\omega}\big(A(X,e_i),A(Y,e_i)\big).$$
\end{prop}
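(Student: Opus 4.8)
The plan is to compute $\d\xi$ directly from the definition $\xi(X)=-\overline{\omega}(e_i,A(e_i,X))$ using the standard formula $\d\xi(X,Y)=X(\xi(Y))-Y(\xi(X))-\xi([X,Y])$, after making a judicious choice of frame and extensions that kills all unwanted lower-order terms at a fixed point $p\in L$. Concretely, I would extend $X,Y$ to coordinate vector fields in geodesic normal coordinates for $g$ centred at $p$ (so $[X,Y]=0$ identically and $\nabla X|_p=\nabla Y|_p=0$), and take $\{e_1,\dots,e_n\}$ to be an orthonormal frame parallel at $p$ (so $\nabla_\cdot e_i|_p=0$). With these choices the term $\xi([X,Y])$ drops and, crucially, $[X,e_i]|_p=\nabla_X e_i-\nabla_{e_i}X|_p=0$. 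It then suffices to compute $X(\xi(Y))$, differentiating with $\overline{\nabla}$ since $\overline{\nabla}\,\overline{\omega}=0$ (this is the one place the K\"ahler hypothesis enters), and to antisymmetrise in $X,Y$.

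Carrying this out, $X\big(\overline{\omega}(e_i,A(e_i,Y))\big)=\overline{\omega}(\overline{\nabla}_X e_i,A(e_i,Y))+\overline{\omega}(e_i,\overline{\nabla}_X(A(e_i,Y)))$. In the first term, at $p$ we have $\overline{\nabla}_X e_i=\nabla_X e_i+A(X,e_i)=A(X,e_i)$, which is normal; using the symmetry of $A$ and the antisymmetry of $\overline{\omega}$, the antisymmetrisation of $-\overline{\omega}(A(X,e_i),A(Y,e_i))$ in $X,Y$ produces exactly the term $-2\overline{\omega}(A(X,e_i),A(Y,e_i))$. For the second term I would write $A(e_i,Y)=\overline{\nabla}_{e_i}Y-\nabla_{e_i}Y$ and split into an ambient piece $-\overline{\omega}(e_i,\overline{\nabla}_X\overline{\nabla}_{e_i}Y)$ and an intrinsic piece $+\overline{\omega}(e_i,\overline{\nabla}_X\nabla_{e_i}Y)$. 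In the latter, $\overline{\nabla}_X(\nabla_{e_i}Y)=\nabla_X\nabla_{e_i}Y+A(X,\nabla_{e_i}Y)$, and since $\nabla_{e_i}Y|_p=0$ the normal correction vanishes at $p$, leaving a purely tangential expression $\omega(e_i,\nabla_X\nabla_{e_i}Y)$.

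The heart of the argument is then the Ricci identity applied to both second-derivative pieces. For the ambient piece, $\overline{\nabla}_X\overline{\nabla}_{e_i}Y=\overline{\nabla}_{e_i}\overline{\nabla}_XY+\overline{R}(X,e_i)Y$ at $p$ (the $\overline{\nabla}_{[X,e_i]}Y$ term vanishes there); antisymmetrising, the double-$\overline{\nabla}$ remainder collapses to $\overline{\nabla}_{e_i}[X,Y]=0$, and the first Bianchi identity gives $\overline{R}(X,e_i)Y-\overline{R}(Y,e_i)X=\overline{R}(X,Y)e_i$, yielding $\overline{\omega}(\overline{R}(X,Y)e_i,e_i)$. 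The same manipulation applied intrinsically turns $\omega(e_i,\nabla_X\nabla_{e_i}Y)$, after antisymmetrisation and Bianchi, into $\omega(e_i,R(X,Y)e_i)=-\omega(R(X,Y)e_i,e_i)$. Summing the three contributions gives the claimed formula.

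I expect the main obstacle to be purely bookkeeping rather than conceptual: one must track tangential versus normal components at every step and verify that each spurious lower-order term (the normal correction $A(X,\nabla_{e_i}Y)$, the brackets $[X,e_i]$, the remainder $\overline{\nabla}_{e_i}[X,Y]$) genuinely vanishes at $p$ under the chosen frame and extensions, so that the antisymmetrised second derivatives reduce cleanly to curvature via the Ricci and first Bianchi identities. Getting the signs right in the two Bianchi collapses and in the $A\ast A$ antisymmetrisation is where care is most needed.
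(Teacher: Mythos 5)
Your proof is correct, and it follows the same overall strategy as the paper (direct computation of $\d\xi(X,Y)=X(\xi(Y))-Y(\xi(X))-\xi([X,Y])$, using $\overline{\nabla}\,\overline{\omega}=0$), but the execution differs in one substantive way. The paper first invokes the symmetry of $A$ to write $A(e_i,Y)=A(Y,e_i)=\overline{\nabla}_Ye_i-\nabla_Ye_i$, so that the derivatives land on the frame vectors $e_i$; the antisymmetrisation in $X,Y$ then produces $\overline{\nabla}_X\overline{\nabla}_Ye_i-\overline{\nabla}_Y\overline{\nabla}_Xe_i-\overline{\nabla}_{[X,Y]}e_i=\overline{R}(X,Y)e_i$ directly from the definition of curvature, with no Bianchi identity, and the leftover first-order terms are disposed of at the end by observing that both sides are tensorial and choosing normal coordinates so that $\nabla_Xe_i=\nabla_Ye_i=0$ at $p$. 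You instead expand $A(e_i,Y)=\overline{\nabla}_{e_i}Y-\nabla_{e_i}Y$ and differentiate $Y$, which yields $\overline{R}(X,e_i)Y-\overline{R}(Y,e_i)X$ and requires the first Bianchi identity (valid here since both $\overline{\nabla}$ and $\nabla$ are torsion-free) to collapse this to $\overline{R}(X,Y)e_i$, and likewise for the intrinsic curvature; in exchange, your up-front choice of coordinate fields and a frame parallel at $p$ makes each spurious term ($\xi([X,Y])$, $\overline{\nabla}_{[X,e_i]}Y$, $A(X,\nabla_{e_i}Y)$, $\overline{\nabla}_{e_i}[X,Y]$) vanish on the nose rather than by a tensoriality argument. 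The signs in your $A\ast A$ antisymmetrisation and in both Bianchi collapses check out, so either route is a complete proof; the paper's is marginally more economical, yours is more explicit about why every lower-order term dies.
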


\begin{proof}
By definition,
\begin{align*}
 \d\xi(X,Y)&=X(\xi(Y))-Y(\xi(X))-\xi([X,Y])\\
&=-X\big(\overline{\omega}\big(e_i,A(e_i,Y)\big)\big)+Y\big(\overline{\omega}\big(e_i,A(e_i,X)\big)\big)+\overline{\omega}\big(e_i,A(e_i,[X,Y])\big).
\end{align*}
Now we observe the well-known fact that $A$ is symmetric: 
$$A(Y,X)=\overline{\nabla}_YX-\nabla_YX=\overline{\nabla}_XY+[Y,X]-\nabla_XY-[Y,X]=A(X,Y)$$
since $\overline{\nabla}$ and $\nabla$ are torsion-free.  Thus we have that
\begin{align*}
 \d\xi(X,Y)&=-X\big(\overline{\omega}\big(e_i,A(Y,e_i)\big)\big)+Y\big(\overline{\omega}\big(e_i,A(X,e_i)\big)\big)+\overline{\omega}\big(e_i,A([X,Y],e_i)\big)\\
&=-\overline{\nabla}_X\big(\overline{\omega}\big(e_i,\overline{\nabla}_Ye_i-\nabla_Ye_i\big)\big)+\overline{\nabla}_Y\big(\overline{\omega}\big(e_i,\overline{\nabla}_Xe_i-\nabla_Xe_i\big)\big)\\
&\quad+\overline{\omega}\big(e_i,\overline{\nabla}_{[X,Y]}e_i-\nabla_{[X,Y]}e_i\big)\\
&=-\overline{\omega}\big(\overline{\nabla}_Xe_i,A(Y,e_i)\big)-\overline{\omega}(e_i,\overline{\nabla}_X\overline{\nabla}_Ye_i-\overline{\nabla}_X\nabla_Ye_i)\\
&\quad+\overline{\omega}\big(\overline{\nabla}_Ye_i,A(X,e_i)\big)+\overline{\omega}(e_i,\overline{\nabla}_Y\overline{\nabla}_Xe_i-\overline{\nabla}_Y\nabla_Xe_i)\\
&\quad+\overline{\omega}\big(e_i,\overline{\nabla}_{[X,Y]}e_i-\nabla_{[X,Y]}e_i\big),
\end{align*}
where we have used the fact that $\overline{\nabla}\overline{\omega}=0$ (as $M$ is K\"ahler).
Substituting $\overline{R}(X,Y)=\overline{\nabla}_X\overline{\nabla}_Y-\overline{\nabla}_Y\overline{\nabla}_X-\overline{\nabla}_{[X,Y]}$ and using the formula for the second fundamental form,
we see that
\begin{align*}
 \d\xi(X,Y)&=-\overline{\omega}\big(\nabla_Xe_i+A(X,e_i),A(Y,e_i)\big)-\overline{\omega}\big(e_i,\overline{R}(X,Y)e_i\big)\\
&\quad+\overline{\omega}\big(e_i,\nabla_X\nabla_Ye_i+A(X,\nabla_Ye_i)\big)+\overline{\omega}\big(\nabla_Ye_i+A(Y,e_i),A(X,e_i)\big)\\
&\quad-\overline{\omega}\big(e_i,\nabla_Y\nabla_Xe_i+A(Y,\nabla_Xe_i)\big)-\overline{\omega}\big(e_i,\nabla_{[X,Y]}e_i\big)\\
&=\overline{\omega}\big(\overline{R}(X,Y)e_i,e_i\big)-\overline{\omega}(R(X,Y)e_i,e_i)-2\overline{\omega}\big(A(X,e_i),A(Y,e_i)\big)\\
&\quad-\overline{\omega}\big(\nabla_Xe_i,A(Y,e_i)\big)-\overline{\omega}\big(e_i,A(Y,\nabla_Xe_i)\big)\\
&\quad+\overline{\omega}\big(\nabla_Ye_i,A(X,e_i)\big)+\overline{\omega}\big(e_i,A(X,\nabla_Ye_i)\big).
\end{align*}

We can re-arrange this as
\begin{align*}
\d\xi(X,Y)&-\overline{\omega}\big(\overline{R}(X,Y)e_i,e_i\big)+\omega(R(X,Y)e_i,e_i)+2\overline{\omega}\big(A(X,e_i),A(Y,e_i)\big)\\
&=-\overline{\omega}\big(\nabla_Xe_i,A(Y,e_i)\big)-\overline{\omega}\big(e_i,A(Y,\nabla_Xe_i)\big)\\
&\quad+\overline{\omega}\big(\nabla_Ye_i,A(X,e_i)\big)+\overline{\omega}\big(e_i,A(X,\nabla_Ye_i)\big).
\end{align*}
Notice that the terms on the left-hand side of this equation are tensorial and so are independent of the choice of coordinates we use, and hence the same must be true of the right-hand side.  Therefore, at $p$, we may choose 
geodesic normal 
coordinates, which then means that $$\nabla_Xe_i=\nabla_Ye_i=0\quad\text{at $p$.}$$  This forces all the terms on the right-hand side to vanish (as $A$ is a tensor) and 
we thus deduce the result.
\end{proof}

We can now combine these observations to compute $\mathcal{L}_{H}\overline{\omega}$ on $L$.  

\begin{prop}\label{d.H.hook.omega.prop}
There exists a smooth tensor $C$ on $L$, depending only on $\pi_L\overline{R}$, $R$ and $A$, such that 
\begin{align}\label{eq:d.H.hook.omega}
\iota^*\d(H\lrcorner\overline{\omega})=\rho-\nabla^*\nabla\omega+C\lrcorner\omega.
\end{align}
\end{prop}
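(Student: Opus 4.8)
The plan is to assemble the three preceding computations into the stated identity and then to recognise every surviving term as an algebraic linear expression in $\omega$. First I would note that Proposition \ref{H.hook.omega} together with the definition of $\xi$ gives, as $1$-forms on $L$,
\[
\iota^*(H\lrcorner\overline{\omega}) = -\d^*\omega + \xi.
\]
Since $\overline{\omega}$ is closed and pullback commutes with $\d$, we get $\iota^*\d(H\lrcorner\overline{\omega}) = \d\,\iota^*(H\lrcorner\overline{\omega}) = -\d\d^*\omega + \d\xi$. This reduces the computation to two quantities that have already been analysed.

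Next I would substitute the Weitzenb\"ock formula \eq{Weitz.eq} and Proposition \ref{xi.prop}. The Weitzenb\"ock formula isolates exactly the term $-\nabla^*\nabla\omega$, together with a curvature term $-\frac{1}{2}R(e_i,e_j)\omega\cdot e_i^*\cdot e_j^*$ which is algebraic and linear in $\omega$ with coefficients built from the intrinsic curvature $R$. In $\d\xi$ I would decompose $\overline{R}(X,Y)e_i = \pi_L\overline{R}(X,Y)e_i + \pi_J\overline{R}(X,Y)e_i$ using the totally real splitting; the $\pi_J$-part contributes precisely $\rho$ by Lemma \ref{rho.lem}, so that the two named terms $\rho-\nabla^*\nabla\omega$ are now visible. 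It then remains to show that the leftover contributions — the Weitzenb\"ock curvature term, the quantity $\overline{\omega}(\pi_L\overline{R}(X,Y)e_i,e_i)$, the intrinsic term $-\omega(R(X,Y)e_i,e_i)$, and $-2\overline{\omega}(A(X,e_i),A(Y,e_i))$ — can all be absorbed into a single expression $C\lrcorner\omega$. The first three are immediate: $\pi_L\overline{R}(X,Y)e_i$ and $R(X,Y)e_i$ are tangent to $L$, so $\overline{\omega}$ there agrees with $\omega$, and each term is manifestly linear in $\omega$ with coefficients built from $\pi_L\overline{R}$ and $R$.

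The one genuinely delicate term is $-2\overline{\omega}(A(X,e_i),A(Y,e_i))$, in which $\overline{\omega}$ is evaluated on the normal vectors $A(\cdot,e_i)$; a priori there is no reason for this to be controlled by $\omega$ at all, and this is where the totally real hypothesis is essential. The hard part of the proof is precisely this step. I would invoke Lemma \ref{omega.perp.lem}: writing each $A(\cdot,e_i)$ as $\pi_\perp(J\,\cdot\,)$ of a unique tangent vector field — possible exactly because $L$ is totally real — that lemma rewrites $\overline{\omega}$ on the normal bundle purely in terms of $\omega$, exhibiting the $A$-term as a contraction of $\omega$ against a tensor built from $A$ (and the fixed background data $J$, $\overline{g}$). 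Collecting the four leftover contributions then defines the tensor $C$, depending only on $\pi_L\overline{R}$, $R$ and $A$, and yields the identity \eq{eq:d.H.hook.omega}. I expect this last step to be the main obstacle, both because it is where the totally real condition is used and because it is what ultimately forces the entire correction $C\lrcorner\omega$ to vanish when $\omega\equiv 0$, which is what makes the preservation of the Lagrangian condition possible.
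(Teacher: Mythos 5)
Your proposal is correct and follows essentially the same route as the paper: differentiate the identity of Proposition \ref{H.hook.omega}, substitute the Weitzenb\"ock formula and Proposition \ref{xi.prop}, split $\overline{R}(X,Y)e_i$ via $\pi_L+\pi_J$ so that Lemma \ref{rho.lem} produces $\rho$, and use Lemma \ref{omega.perp.lem} to convert the second fundamental form term $\overline{\omega}(A(X,e_i),A(Y,e_i))$ into a contraction against $\omega$. You have also correctly identified the $A$-term as the one place where the totally real hypothesis is genuinely needed, which is exactly how the paper's proof handles it.
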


\begin{proof}
We first notice that, by Proposition \ref{H.hook.omega},
\begin{align*}
\d(H\lrcorner\overline{\omega})(X,Y)&=-\d\d^*\omega(X,Y)+\d\xi(X,Y).
\end{align*}
Using the Weizenb\"ock formula \eq{Weitz.eq} and Proposition \ref{xi.prop} gives us that
\begin{align*}
-\d\d^*\omega(X,Y)&+\d\xi(X,Y)=-\nabla^*\nabla\omega-\frac{1}{2}R(e_i,e_j)\omega\cdot e_i^*\cdot e_j^*\\
&+\overline{\omega}\big(\overline{R}(X,Y)e_i,e_i\big)-\omega(R(X,Y)e_i,e_i)-2\overline{\omega}\big(A(X,e_i),A(Y,e_i)\big).
\end{align*}
Applying Lemma \ref{omega.perp.lem} we can write
\begin{align*}
\overline{\omega}\big(A(X,e_i),A(Y,e_i)\big)&=\omega\big(\pi_{\rm T}\circ\pi_JA(X,e_i),\pi_{\rm T}\circ\pi_JA(Y,e_i)\big)\\
&\quad-\omega\big(\pi_L\circ JA(X,e_i),\pi_L\circ JA(Y,e_i)\big).
\end{align*}
Applying Lemma \ref{rho.lem} we see that
\begin{align*}
\overline{\omega}\big(\overline{R}(X,Y)e_i,e_i\big)&=\overline{\omega}\big(\pi_J\overline{R}(X,Y)e_i,e_i\big)+\overline{\omega}\big(
\pi_L\overline{R}(X,Y)e_i,e_i\big)\\
&=\rho(X,Y)+\omega(\pi_L\overline{R}(X,Y)e_i,e_i).
\end{align*}
Overall we have that
\begin{align*}
\d(H\lrcorner\overline{\omega})(X,Y)&=\rho(X,Y)-\nabla^*\nabla\omega-\frac{1}{2}R(e_i,e_j)\omega\cdot e_i^*\cdot e_j^*+\omega\big(\pi_L\overline{R}(X,Y)e_i,e_i\big)\\
&\quad-\omega(R(X,Y)e_i,e_i)-2\omega\big(\pi_{\rm T}\circ\pi_JA(X,e_i),\pi_{\rm T}\circ\pi_JA(Y,e_i)\big)\\
&\quad+2\omega\big(\pi_L\circ JA(X,e_i),\pi_L\circ JA(Y,e_i)\big),
\end{align*}
which gives the result.
\end{proof}

\subsection{Lagrangian deformations}\label{ss:lagr_defs}

If $\iota:L\rightarrow M$ is Lagrangian, then the normal bundle $NL=J(TL)$ is isometric to $TL$ and thus $T^*L$, and the map is given by
$$JX\mapsto \overline{\omega}(JX,.)=-g(X,.).$$
We thus can view (normal) deformations of $L$ as 1-forms $\alpha$ and it follows from Weinstein's Lagrangian Neighbourhood Theorem that $\alpha$ defines 
a Lagrangian deformation of $L$ if and only if $\alpha$ is closed.  In other words, if $\mathcal{L}$ is the space of Lagrangian immersions of $L$ in 
$M$ homotopic to $\iota$, up to reparametrisation, then $$T_L\mathcal{L}=\{\alpha\in\Lambda^1(L)\,:\,\d\alpha=0\}.$$ 

A particular normal deformation is given by the mean curvature vector $H$.  Therefore, if we want $H$ to define a Lagrangian deformation of $L$ then we 
need the 1-form $\iota^*(H\lrcorner\overline{\omega})$ to be closed.  We see from Proposition \ref{H.hook.omega} and \eqref{old.xi.eq} that
$$\iota^*(H\lrcorner\overline{\omega})=-\d^*\omega+\xi=\xi$$
as $\omega=0$ because $L$ is Lagrangian.  Moreover, Proposition \ref{d.H.hook.omega.prop} shows that 
$$\d\xi=\rho-\nabla^*\nabla\omega+C\lrcorner\omega=\rho,$$
since $\omega=0$.  Thus $\xi$ is closed if and only if $\rho=0$.  

Now, if $M$ is KE then $\overline{\rho}=\lambda\overline{\omega}$, so $\rho=\iota^*\overline{\rho}=\lambda\omega=0$.  Therefore, for Lagrangians $L$ in 
KE manifolds, we have that $\xi\in T_L\mathcal{L}$, which we write as a lemma, previously given for example in \cite{Oh2}.

\begin{lem} \label{l:Hhookomega}
If $L$ is a Lagrangian submanifold of a K\"ahler--Einstein manifold, then  $H\lrcorner\overline{\omega}=\xi$ is a closed 1-form on $L$ and so 
$H$ defines a Lagrangian deformation of $L$.
\end{lem}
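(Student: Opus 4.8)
The plan is to assemble the lemma directly from the two main computations of this subsection, Propositions \ref{H.hook.omega} and \ref{d.H.hook.omega.prop}, invoking the Lagrangian and K\"ahler--Einstein hypotheses only at the end to kill the terms that obstruct closedness. First I would identify the $1$-form $\iota^*(H\lrcorner\overline{\omega})$. By Proposition \ref{H.hook.omega}, $\overline{\omega}(H,X)=-\d^*\omega(X)+\xi(X)$ for all $X\in TL$. Since $L$ is Lagrangian, $\omega=\iota^*\overline{\omega}=0$, so its codifferential $\d^*\omega$ vanishes and we are left with $\iota^*(H\lrcorner\overline{\omega})=\xi$, establishing the first claimed equality.

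Next I would show that $\xi$ is closed. As pullback commutes with $\d$, we have $\d\xi=\d\iota^*(H\lrcorner\overline{\omega})=\iota^*\d(H\lrcorner\overline{\omega})$, so Proposition \ref{d.H.hook.omega.prop} gives $\d\xi=\rho-\nabla^*\nabla\omega+C\lrcorner\omega$. The Lagrangian condition $\omega=0$ forces the last two terms to vanish, leaving $\d\xi=\rho$. At this point I invoke the K\"ahler--Einstein hypothesis: from $\overline{\rho}=\lambda\overline{\omega}$ we obtain $\rho=\iota^*\overline{\rho}=\lambda\omega=0$, whence $\d\xi=0$.

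Finally, to conclude that $H$ defines a Lagrangian deformation, I would appeal to the description of the tangent space to the space of Lagrangian immersions recalled at the start of this subsection, namely $T_L\mathcal{L}=\{\alpha\in\Lambda^1(L):\d\alpha=0\}$, which follows from Weinstein's Lagrangian Neighbourhood Theorem. Since $\iota^*(H\lrcorner\overline{\omega})=\xi$ is a closed $1$-form, the normal deformation corresponding to $H$ lies in $T_L\mathcal{L}$, as required.

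There is no genuine analytic obstacle here, since the heavy lifting has already been carried out in Propositions \ref{H.hook.omega} and \ref{d.H.hook.omega.prop}; the lemma is essentially a corollary. The only point requiring care is bookkeeping: one must check that every obstructing term really vanishes, and in particular note that two distinct hypotheses are doing the work. The Lagrangian condition removes $\d^*\omega$, $\nabla^*\nabla\omega$ and $C\lrcorner\omega$, while the Einstein condition is what makes $\rho$ vanish. Making this separation of roles explicit is worthwhile, since it is precisely the cancellation phenomenon the paper wishes to isolate and generalise in its later treatment of totally real submanifolds.
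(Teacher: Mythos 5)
Your proposal is correct and follows essentially the same route as the paper: apply Proposition \ref{H.hook.omega} with $\omega=0$ to get $\iota^*(H\lrcorner\overline{\omega})=\xi$, apply Proposition \ref{d.H.hook.omega.prop} with $\omega=0$ to get $\d\xi=\rho$, and then use the Einstein condition to conclude $\rho=\lambda\omega=0$, so $\xi\in T_L\mathcal{L}$. Your explicit separation of which hypothesis kills which term matches the point the paper itself emphasises.
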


\noindent This key calculation is a compelling reason (which turns out to 
be justified) to believe that MCF preserves Lagrangians in KE manifolds.  It is not however sufficient since it is only 
an infinitesimal deformation calculation.  It also shows that $\xi$ is generally not closed if $M$ is not KE, so 
one should not expect Lagrangians to be preserved by MCF in general K\"ahler manifolds. This justifies the need for something more sophisticated, \textit{i.e.}~the use of coupled flows.

\subsection{Lagrangian MCF in K\"ahler manifolds}\label{ss:existence_lagr_mcf}

Let us suppose that a totally real submanifold $\iota:L\rightarrow M$ evolves via mean curvature flow (MCF) as in \eq{MCF.eq}.
This flow is known to have short-time existence, so we have a one-parameter family of solutions $\iota_t:L\rightarrow M$ with $\iota_0=\iota$ 
and we let $L_t=\iota_t(L)$. 

Let us also suppose that $M$ is K\"ahler--Einstein, so $\overline{\rho}=\lambda\overline{\omega}$ for some constant 
$\lambda$.  We want to show that if $L$ is initially Lagrangian then it remains Lagrangian for all time.  Precisely, we 
show the following, which coincides with the first part of Theorem \ref{main.LMCF.thm}.

\begin{thm}\label{LMCF.KE.thm}
Let $L$ be a compact Lagrangian in a K\"ahler--Einstein manifold.  If $L$ evolves via MCF then 
$L_t$ is Lagrangian for all $t$. 
\end{thm}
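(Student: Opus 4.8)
The plan is to track the pulled-back form $\omega_t:=\iota_t^*\overline{\omega}$ along the flow and to show that it satisfies a linear, homogeneous parabolic equation; since $\omega_0=0$ and $L$ is compact, it must then vanish for all $t$, which is precisely the Lagrangian condition.

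First I would derive the evolution equation for $\omega_t$. Because $M$ (hence $\overline{\omega}$) is fixed while the immersions move with velocity $\partial_t\iota_t=H_t$, the standard variation formula for the pullback of a fixed form gives
\[
\frac{\partial\omega_t}{\partial t}=\iota_t^*\big(\mathcal{L}_{H_t}\overline{\omega}\big)=\iota_t^*\,\d(H_t\lrcorner\overline{\omega}),
\]
the second equality being Cartan's formula together with $\d\overline{\omega}=0$. This is exactly the quantity computed in Proposition \ref{d.H.hook.omega.prop}, so I may substitute its value. Invoking the K\"ahler--Einstein hypothesis $\overline{\rho}=\lambda\overline{\omega}$, which pulls back to $\rho_t=\lambda\,\omega_t$, this becomes
\[
\frac{\partial\omega_t}{\partial t}=-\nabla_t^*\nabla_t\,\omega_t+\lambda\,\omega_t+C_t\lrcorner\omega_t,
\]
a \emph{linear, homogeneous} parabolic equation for $\omega_t$: every term on the right is linear in $\omega_t$, with coefficients (the induced connection, the constant $\lambda$ and the tensor $C_t$) smooth in $t$ on the MCF existence interval.

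With this equation in hand, the conclusion follows from uniqueness for linear parabolic equations on the compact manifold $L$. Concretely, I would run an energy estimate: set $F(t):=\int_L|\omega_t|^2_{g_t}\,\d V_{g_t}$ and differentiate. Integration by parts turns the leading contribution $-\int_L\langle\nabla_t^*\nabla_t\omega_t,\omega_t\rangle$ into $-\int_L|\nabla_t\omega_t|^2\le 0$, while the remaining terms (including those arising from the $t$-dependence of the induced metric $g_t$ in the norm and volume form) are each bounded by a constant multiple of $|\omega_t|^2$. This yields a differential inequality $F'(t)\le c\,F(t)$; since $F(0)=0$, Gr\"onwall's inequality forces $F\equiv 0$, i.e.\ $\omega_t\equiv 0$, so $L_t$ is Lagrangian.

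Two points require care, and the second is the real obstacle. The first is that Proposition \ref{d.H.hook.omega.prop} is stated for \emph{totally real} submanifolds, so I must ensure $L_t$ stays totally real while the evolution equation is being used. Since being totally real is an open condition and $L_0$ is Lagrangian (hence totally real), $L_t$ remains totally real on a maximal subinterval $[0,T')$ of the MCF existence interval; there the argument above applies and gives $\omega_t\equiv 0$, so $L_t$ is Lagrangian and in particular totally real up to and including $t=T'$ by continuity, and a standard continuation argument shows $T'$ is the full existence time. The genuine subtlety is the energy step in the presence of the evolving metric $g_t$: one must check that differentiating $F$ produces only lower-order contributions controllable by $F$ itself. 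Here the homogeneity of the equation in $\omega_t$ is essential — it is precisely the cancellation, enforced by the K\"ahler and Einstein conditions, of the ambient Ricci term $\rho$ against the intrinsic curvature terms that removes any inhomogeneous source and makes $\omega_t\equiv 0$ the unique solution.
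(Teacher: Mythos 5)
Your proposal is correct, and its geometric core is identical to the paper's: you derive the same evolution equation $\partial_t\omega_t=\iota_t^*\d(H_t\lrcorner\overline{\omega})=\rho_t-\nabla_t^*\nabla_t\omega_t+C_t\lrcorner\omega_t$ via Cartan's formula and Proposition \ref{d.H.hook.omega.prop}, and you use the K\"ahler--Einstein condition in exactly the same way, namely to replace $\rho_t$ by $\lambda\omega_t$ and thereby make the equation homogeneous in $\omega_t$. Where you diverge is only in the final uniqueness step. The paper works pointwise: it sets $f=g_t(\omega_t,\omega_t)$, derives the scalar parabolic inequality $\partial_t f\le-\nabla_t^*\nabla_t f+Bf$, and closes with the maximum principle (via the auxiliary function $f_\epsilon=f-\epsilon e^{2Bt}$). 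You instead integrate: $F(t)=\int_L|\omega_t|^2\,\d V_{g_t}$, integration by parts kills the leading term, and Gr\"onwall finishes. Both routes are standard and both need compactness of $L$ and uniform bounds on $C_t$ and $\partial_t g_t$ over a fixed time interval; your energy method trades the $\epsilon$-perturbation bookkeeping of the maximum principle for the (equally routine) control of the $t$-derivatives of the induced metric and volume form inside the integral, which you correctly flag. Your continuation argument for keeping $L_t$ totally real is slightly more explicit than the paper's, and is valid since Lagrangian implies totally real. Neither approach gains generality over the other here.
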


\begin{proof}
If we let $g_t=\iota_t^*\overline{g}$ and $\omega_t=\iota_t^*\overline{\omega}$,  
we wish to calculate $\frac{\partial}{\partial t}g_t(\omega_t,\omega_t),$ where we 
consider the metric extended to forms in the natural manner.

We first see that
\begin{align}\label{evolve.eq.1}
\frac{\partial}{\partial t}g_t(\omega_t,\omega_t)=\left(\frac{\partial}{\partial t}g_t\right)(\omega_t,\omega_t)+2g_t\left(\frac{\partial}{\partial t}\omega_t,\omega_t\right).
\end{align}

Now,
\begin{align}
\frac{\partial}{\partial t}\omega_t&=\frac{\partial}{\partial t}\iota_t^*\overline{\omega}=\iota_t^*\mathcal{L}_{H_t}\overline{\omega}=\iota_t^*\d(H_t\lrcorner\overline{\omega})+\iota_t^*(H_t\lrcorner\d\overline{\omega}),\label{evolve.eq.2}
\end{align}
using Cartan's formula.  Since $\d\overline{\omega}=0$ the second term vanishes in \eq{evolve.eq.2}.  
 For small $t$, $\iota_t$ is a totally real immersion since the totally real condition is an open one, 
so we can apply Proposition \ref{d.H.hook.omega.prop} and deduce
that
\begin{align}\label{evolve.eq.3}
\iota_t^*\d(H_t\lrcorner\overline{\omega})=\rho_t-\nabla^*_t\nabla_t\omega_t+C_t\lrcorner\omega_t,
\end{align}
where $\rho_t=\iota_t^*\overline{\rho}$, $\nabla_t$ is the Levi-Civita connection of $g_t$ and
$C_t$ is a smooth tensor only depending on the second fundamental form and the Riemann curvature tensor on $L_t$.

Now $\overline{\rho}=\lambda\overline{\omega}$ by the K\"ahler--Einstein condition, hence $\rho_t=\iota_t^*\overline{\rho}=\lambda \iota_t^*\overline{\omega}=\lambda\omega_t$.
Plugging \eq{evolve.eq.3} in \eq{evolve.eq.2}  gives us that
\begin{align}\label{evolve.eq.5a}
 \frac{\partial}{\partial t}\omega_t=\rho_t-\nabla^*_t\nabla_t\omega_t+C_t\lrcorner\omega_t=-\nabla^*_t\nabla_t\omega_t+C'_t\lrcorner\omega_t.
\end{align}
for some smooth tensor $C'_t$.
Therefore, we see that
\begin{align}
g_t\left(\frac{\partial}{\partial t}\omega_t,\omega_t\right)&=g_t(-\nabla^*_t\nabla_t\omega_t+C'_t\lrcorner\omega_t,\omega_t)\nonumber\\
&=-\frac{1}{2}\nabla_t^*\nabla_t\big(g_t(\omega_t,\omega_t)\big)-g_t(\nabla_t\omega_t,\nabla_t\omega_t)+g_t(C'_t\lrcorner\omega_t,\omega_t).\label{evolve.eq.5}
\end{align}

Inserting \eq{evolve.eq.5} in \eq{evolve.eq.1} allows us to deduce that
\begin{align}
\frac{\partial}{\partial t}g_t(\omega_t,\omega_t)&=-\nabla_t^*\nabla_t\big(g_t(\omega_t,\omega_t)\big)-2g_t(\nabla_t\omega_t,\nabla_t\omega_t)+2g_t(C'_t\lrcorner\omega_t,\omega_t)\nonumber\\
&\qquad+\left(\frac{\partial}{\partial t}g_t\right)(\omega_t,\omega_t).\label{evolve.eq.6}
\end{align}
We see that $g_t(\nabla_t\omega_t,\nabla_t\omega_t)\geq 0$.  

Choose a finite time $T>0$ such that $L_t$ is defined for all $t\in [0,T]$ and let $f=g_t(\omega_t,\omega_t)$, which is non-negative and vanishes at $t=0$.  As $L$ is compact,  \eqref{evolve.eq.6} implies that $f$ satisfies the parabolic inequality
\begin{equation}\label{eq:parabolic_inequality}
\frac{\partial}{\partial t}f\leq -\nabla_t^*\nabla_t f+Bf
\end{equation}
for some constant $B$.  Applying the maximum principle to \eqref{eq:parabolic_inequality} gives us that $f\equiv 0$, and thus $L_t$ is Lagrangian for all $t$.
\end{proof}

\noindent We see from the proof that without the KE assumption there is no reason to suppose that Lagrangians are preserved by MCF in a K\"ahler manifold, as we 
knew from the infinitesimal deformation calculation from the previous subsection.  

However, we now show that if $M$ is any K\"ahler manifold evolving under K\"ahler--Ricci flow then Lagrangians are still preserved under MCF.  This of course contains our previous result as a special case because 
KE manifolds are solitons for KRF, where the K\"ahler form simply evolves by dilations, and hence the space of Lagrangian submanifolds stays the same for all time.
  The idea is that KRF exactly ``cancels out'' $\d(H\lrcorner\overline{\omega})$, because $\xi=H\lrcorner\overline{\omega}$ is not closed, thus ensuring that
  $H$ becomes tangent to the space of Lagrangian immersions (which now varies with $t$ as the K\"ahler structure is varying).

Let us continue to suppose that $L$ evolves by MCF but we also
suppose that simultaneously the K\"ahler structure $(\overline{g},J,\overline{\omega})$ on $M$ is evolving via KRF as in \eq{KRF.eq}.  Note that 
we fix the complex structure $J$ and we let $\overline{g}_t$ be the metric such that $\overline{g}_t(JX,Y)=\overline{\omega}_t(X,Y)$ for all $X,Y$.

Since the complex structure $J$ is fixed, the notion of totally real in $M$ is independent of $t$.  Moreover, the condition for an immersion to be totally real is an open one and both Ricci flow and MCF have short-time existence, so for short time we know that $\iota_t:L\rightarrow M$ exists and remains totally real.   The following result coincides with the second part of Theorem \ref{main.LMCF.thm}.

\begin{thm}\label{LMCF.KRF.thm}
Let $M$ be a K\"ahler manifold evolving under K\"ahler--Ricci flow and let $L$ be a compact submanifold of $M$ evolving 
simultaneously under mean curvature flow, which is Lagrangian for the initial K\"ahler structure.  Then $L_t$ is Lagrangian for the 
K\"ahler structure $\overline{\omega}_t$ for all $t$. 
\end{thm}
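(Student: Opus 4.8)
The plan is to reduce this to the parabolic maximum principle argument already carried out in the proof of Theorem \ref{LMCF.KE.thm}, the only new ingredient being an extra term coming from the evolution of the ambient K\"ahler form. As before I would set $g_t=\iota_t^*\overline{g}_t$ and $\omega_t=\iota_t^*\overline{\omega}_t$, and aim to show that $f:=g_t(\omega_t,\omega_t)$ satisfies a parabolic inequality with vanishing initial data. Since $J$ is fixed and the totally real condition is open, $\iota_t$ remains totally real for short time, so Proposition \ref{d.H.hook.omega.prop} is applicable at each time.

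The key computation is that of $\frac{\partial}{\partial t}\omega_t$. Now \emph{both} the immersion $\iota_t$ and the ambient form $\overline{\omega}_t$ depend on $t$, so I would write $F(s,t)=\iota_s^*\overline{\omega}_t$ and differentiate $F(t,t)$ by the chain rule, producing two contributions. The $s$-derivative gives $\iota_t^*\mathcal{L}_{H_t}\overline{\omega}_t$, where $H_t$ is the mean curvature vector computed with the \emph{evolving} metric $\overline{g}_t$ --- this is precisely where the coupling enters; since $\d\overline{\omega}_t=0$, Cartan's formula reduces this to $\iota_t^*\d(H_t\lrcorner\overline{\omega}_t)$. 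The $t$-derivative gives $\iota_t^*\frac{\partial}{\partial t}\overline{\omega}_t=-\rho_t$ by the KRF equation \eq{KRF.eq}. Hence
$$\frac{\partial}{\partial t}\omega_t=\iota_t^*\d(H_t\lrcorner\overline{\omega}_t)-\rho_t.$$

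Applying Proposition \ref{d.H.hook.omega.prop} to the first term yields $\iota_t^*\d(H_t\lrcorner\overline{\omega}_t)=\rho_t-\nabla_t^*\nabla_t\omega_t+C_t\lrcorner\omega_t$, and the two occurrences of $\rho_t$ now cancel --- this is the geometric heart of the coupling anticipated in the introduction, and note that here the cancellation is exact, requiring no Einstein condition to absorb $\rho_t$. One is left with
$$\frac{\partial}{\partial t}\omega_t=-\nabla_t^*\nabla_t\omega_t+C_t\lrcorner\omega_t,$$
which is identical in form to the evolution equation reached in the KE case. From here the remainder of the proof is verbatim the maximum principle argument of Theorem \ref{LMCF.KE.thm}: on a compact time interval $[0,T]$ the terms involving $C_t$ and $\partial_t g_t$ are bounded by $Bf$, giving $\frac{\partial}{\partial t}f\leq-\nabla_t^*\nabla_t f+Bf$ with $f=0$ at $t=0$, and comparison with $\epsilon e^{2Bt}$ forces $f\equiv 0$, so $\omega_t\equiv 0$ for all $t$.

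I expect the only genuine subtlety to be the bookkeeping in the chain rule for $\frac{d}{dt}\iota_t^*\overline{\omega}_t$, and in particular keeping track that the mean curvature vector must be taken with respect to the time-dependent metric $\overline{g}_t$. Once the $\rho_t$ cancellation is secured, no new analytic difficulty arises beyond what was already handled in the KE case, since the maximum principle input is unchanged.
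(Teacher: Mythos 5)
Your proposal is correct and follows the paper's own proof essentially verbatim: the same chain-rule decomposition of $\frac{\partial}{\partial t}\iota_t^*\overline{\omega}_t$ into the Lie derivative term and the KRF term, the same cancellation of $\rho_t$ via Proposition \ref{d.H.hook.omega.prop}, and the same reduction to the maximum principle argument of Theorem \ref{LMCF.KE.thm}. No gaps.
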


\begin{proof}
Using the same notation as in the proof of Theorem \ref{LMCF.KE.thm} we see that
\begin{align}
\frac{\partial}{\partial t}\omega_t&=\frac{\partial}{\partial t}\iota_t^*\overline{\omega}_t
=\iota_t^*\mathcal{L}_{H_t}\overline{\omega}_t+\iota_t^*\frac{\partial}{\partial t}\overline{\omega}_t
=\iota_t^*\d(H_t\lrcorner\overline{\omega}_t)-\iota_t^*\overline{\rho}_t,\label{evolve.eq.2b}
\end{align}
using Cartan's formula, $\d\overline{\omega}_t=0$ (as they are K\"ahler forms) and the fact that $\overline{\omega}_t$ evolves by K\"ahler--Ricci flow \eq{KRF.eq}.
As in the proof of Theorem \ref{LMCF.KE.thm} we have that \eqref{evolve.eq.3} holds, but now with $\rho_t=\iota_t^*\overline{\rho}_t$.  Substituting \eqref{evolve.eq.3} in \eqref{evolve.eq.2b} gives
\begin{align*}
 \frac{\partial}{\partial t}\omega_t=\rho_t-\nabla^*_t\nabla_t\omega_t+C_t\lrcorner\omega_t-\rho_t=-\nabla^*_t\nabla_t\omega_t+C_t\lrcorner\omega_t,
\end{align*}
which is the same form as \eqref{evolve.eq.5a}.  The proof now proceeds just as for Theorem \ref{LMCF.KE.thm}.
\end{proof}

We observe a minor modification of the previous result. The proof is similar.

\begin{cor}\label{cor:normalized}
Let $M$ be a K\"ahler manifold evolving under the normalized K\"ahler--Ricci flow as in \eq{NKRF.eq} and let $L$ be a compact Lagrangian in $M$ 
with respect to the initial K\"ahler structure.  If $L$ evolves simultaneously by mean curvature flow then $L_t$ is Lagrangian with respect to $\overline{\omega}_t$ 
for all $t$.
\end{cor}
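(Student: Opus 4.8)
The plan is to adapt the proof of Theorem \ref{LMCF.KRF.thm} almost verbatim, tracking the extra term coming from the $\lambda\overline{\omega}_t$ piece in the normalized flow. The only difference from the unnormalized case is that $\overline{\omega}_t$ now evolves by \eq{NKRF.eq} rather than \eq{KRF.eq}, so in the analogue of \eq{evolve.eq.2b} the term $\iota_t^*\frac{\partial}{\partial t}\overline{\omega}_t$ produces $-\iota_t^*\overline{\rho}_t+\lambda\,\iota_t^*\overline{\omega}_t = -\rho_t+\lambda\omega_t$ instead of just $-\rho_t$.

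**First I would** write down the evolution of $\omega_t=\iota_t^*\overline{\omega}_t$ using Cartan's formula together with \eq{NKRF.eq}:
\begin{align*}
\frac{\partial}{\partial t}\omega_t
=\iota_t^*\d(H_t\lrcorner\overline{\omega}_t)+\iota_t^*(H_t\lrcorner\d\overline{\omega}_t)
-\rho_t+\lambda\omega_t.
\end{align*}
As before $\d\overline{\omega}_t=0$ kills the second term, and for short time $\iota_t$ remains totally real (an open condition), so Proposition \ref{d.H.hook.omega.prop} applies and gives $\iota_t^*\d(H_t\lrcorner\overline{\omega}_t)=\rho_t-\nabla_t^*\nabla_t\omega_t+C_t\lrcorner\omega_t$. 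Substituting, the two copies of $\rho_t$ cancel exactly as in Theorem \ref{LMCF.KRF.thm}, and I am left with
\begin{align*}
\frac{\partial}{\partial t}\omega_t
=-\nabla_t^*\nabla_t\omega_t+C_t\lrcorner\omega_t+\lambda\omega_t
=-\nabla_t^*\nabla_t\omega_t+C_t'\lrcorner\omega_t,
\end{align*}
where $C_t'$ absorbs the extra zeroth-order term $\lambda\omega_t$ and is still a smooth tensor built from the second fundamental form, the Riemann curvature tensor and the constant $\lambda$.

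**Then I would** run the identical maximum-principle argument: setting $f=g_t(\omega_t,\omega_t)$, pairing the evolution equation against $\omega_t$ as in \eq{evolve.eq.5}--\eq{evolve.eq.6}, and using compactness of $L$ and of the time interval $[0,T]$ to bound the zeroth-order and gradient-squared contributions by $Bf$, yields the parabolic inequality $\frac{\partial}{\partial t}f\leq -\nabla_t^*\nabla_t f+Bf$ with $f\equiv 0$ at $t=0$. The barrier comparison with $\epsilon e^{2Bt}$ then forces $f\equiv 0$, so $L_t$ stays Lagrangian.

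**The only point requiring a moment's care** is confirming that the new zeroth-order term $\lambda\omega_t$ is genuinely harmless: since it is a constant multiple of $\omega_t$ itself, it simply contributes to the constant $B$ in the parabolic inequality and does not disturb the sign structure exploited by the maximum principle. There is no real obstacle here; the normalization is just a tame lower-order perturbation, and the essential cancellation of Ricci curvature against the ambient flow is already supplied by Proposition \ref{d.H.hook.omega.prop}. For this reason the statement follows immediately from the proof of Theorem \ref{LMCF.KRF.thm}, which is why it is recorded as a corollary rather than proved in full.
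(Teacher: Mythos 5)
Your proposal is correct and follows the paper's own argument essentially verbatim: both track the extra $\lambda\overline{\omega}_t$ term from the normalized flow, observe that after the cancellation of $\rho_t$ it is a harmless zeroth-order term absorbable into the tensorial contraction $C_t\lrcorner\omega_t$, and then invoke the maximum-principle argument of Theorem \ref{LMCF.KRF.thm} unchanged.
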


\paragraph{Concluding remarks.}Before closing this section let us pause to reflect upon two issues.
First, notice the basic strategy underlying these proofs. The goal is to show that, under certain conditions, the Lagrangian condition is preserved. What could go wrong? Given that Lagrangians form a closed subset of the space of totally real submanifolds, clearly the one thing we need to rule out is that the initial submanifold ``degenerates'', becoming totally real. The strategy is thus to extend the study of the volume functional and MCF from Lagrangians to totally reals. Compare however the formulae for Lagrangians, appearing in Section \ref{ss:lagr_defs}, with those for general totally real submanifolds, appearing in (\ref{H.hook.omega.eq}) and (\ref{eq:d.H.hook.omega}). The latter are clearly more cumbersome, leading to additional complications in the final steps of the proofs. Specifically, 
\begin{itemize}
\item these extra terms lead to the parabolic inequality  (\ref{eq:parabolic_inequality}), forcing us to use the maximum principle;
 \item the final result concerns only Lagrangians, not totally reals.
\end{itemize}
The calculations also rely crucially on the K\"ahler condition, without which we could not say anything even about Lagrangians.

Secondly, Lemma \ref{l:Hhookomega} indicates that, in the appropriate context, the mean curvature vector $H$  has a double geometric description: both as the negative gradient of the volume functional and as a primitive of the Ricci form. Furthermore, it satisfies a certain differential equation: the 1-form $\iota^*(H\lrcorner\bar{\omega})$ is closed. 

In the following sections we will investigate both issues in depth. Specifically, we will initiate a study of totally real submanifolds which will indicate that the standard Riemannian volume is not a particularly natural quantity in this context. Replacing its role in the above proofs with its more natural analogues will lead to uniformly simple formulae governing the flow both of Lagrangians and of totally reals, yielding stronger results and simpler proofs. Ultimately, it will be the second geometric description of the flow  which will take the lead.

\section{Geometry of totally real submanifolds}\label{s:totally_real}

Let $(M,J)$ be a $2n$-manifold endowed with an almost complex structure. Given $p\in M$, recall that an $n$-plane $\pi$ in $T_pM$ is \textit{totally real} if $J(\pi)\cap\pi=\{0\}$, 
\textit{i.e.} if $T_pM$ is the complexification of $\pi$. We denote by $\text{TR}^+_p$ the Grassmannian of oriented totally real $n$-planes in $T_pM$. 
The union of these spaces defines a fibre bundle $\text{TR}^+$ over $M$, whose fibre is $\GL(n,\C)/\GL^+(n,\R)$. 

Now let $\iota:L\rightarrow M$ be an immersion of an $n$-dimensional oriented manifold $L$ in $M$. As before, we will often identify $L$ with $\iota$ and say that $\iota$ (or $L$) is \textit{totally real} if, for each $p\in L$, $T_pL$ is totally real in $T_pM$. This determines a decomposition
$$T_pM=T_pL\oplus J(T_pL).$$
We let $\pi_L,\pi_J$ denote the projections from $T_pM$ onto $T_pL$ and $J(T_pL)$ respectively as before.  These are clearly the more natural projections in the context of
totally real geometry than the usual tangential and normal projections, because they do not need the additional structure of a metric. 

We let $\TRL$ denote the space of totally real immersions of $L$ which are homotopic to a given $\iota$, modulo reparametrization. 
Notice that the totally 
real condition is open in the Grassmannian of all $n$-planes, so it is a ``soft'' condition. In particular, the space $\TRL$ is 
infinite-dimensional. 

Given $L\in \TRL$, we describe its tangent space $T_L\TRL$ as follows. Infinitesimal deformations of a given immersion $L$ are 
determined by the space of all sections of the bundle $TM$ over $L$.  At the infinitesimal level, quotienting immersions by reparametrization amounts  to taking 
the quotient of all sections by those which are tangent to $L$. Thus $T_L\TRL$ can be identified with sections of the bundle $TM/TL\simeq J(TL)\simeq TL$ over 
$L$, \textit{i.e.} $T_L\TRL\simeq \Lambda^0(TL)$. The key point here is that the totally real condition provides not only a canonical space in $TM$ which is 
transverse to $TL$, but also a canonical isomorphism with $TL$. In other words, the (extrinsic) ``normal'' bundle (defined via quotients) is canonically isomorphic to the (intrinsic) tangent bundle.

In the next section we show that the totally real condition is closely related to the geometry of the canonical bundle $K_M$ of $M$. This fact determines a ``natural'' (we call it \textit{canonical}) 
geometry of totally real submanifolds. This construction, which was motivated in part by work in \cite{Bor}, requires some additional structure: a metric $h$ and a unitary connection $\tnabla$ on $K_M$.

\subsection{Canonical data for totally real submanifolds}\label{ss:canonical_data}

We can characterize totally real planes in $T_pM$ as follows: an $n$-plane $\pi$ in $T_pM$ is totally real if and only if $\alpha|_{\pi}\neq 0$ 
for all (equivalently, for any) $\alpha\in K_M(p)\setminus\{0\}$.  
This characterization  clearly demonstrates the importance of the canonical bundle $K_M$ for totally real geometry.  
Notice that $n$-planes $\pi$ in $T_pM$ which are not totally real (\textit{i.e.}~those which 
satisfy $\alpha|_{\pi}=0$ for some $\alpha\in K_M(p)\setminus\{0\}$) contain a complex line: a pair $\{X,JX\}$ for some $X\in T_pM\setminus\{0\}$.  We 
call these $n$-planes \emph{partially complex}.  We also call $n$-dimensional submanifolds partially complex if each of their tangent spaces are partially complex.

Let $\pi$ be an oriented totally real $n$-plane in $T_pM$ and let $v_1,\ldots,v_n$ be a positively oriented basis. We may then define $v_j^*\in T_p^*M\otimes\C$ by
$$v_j^*(v_k)=\delta_{jk}\quad\text{and}\quad v_j^*(Jv_k)=i\delta_{jk}.$$
This allows us to define a non-zero form $v_1^*\w\ldots\w v_n^*\in K_M(p)$.

However, the form we have constructed depends on the choice of basis $v_1,\ldots,v_n$. 
We can fix this by assuming that we have a Hermitian metric $h$ on the canonical bundle $K_M$ of $M$. We then define
$$\Omega_J[\pi]=\frac{v_1^*\w\ldots\w v_n^*}{|v_1^*\w\ldots\w v_n^*|_h}\in K_M(p).$$
This form has unit norm and is 
independent of the choice of basis: if we choose another basis $w_1,\ldots,w_n$, which is equivalent to choosing $A\in\GL^+(n,\R)$ such that $w=Av$ (in matrix notation), then $w_1^*\w\ldots\w w_n^*=\det(A^{-1})v_1^*\w\ldots\w v_n^*$ so 
$$\frac{w_1^*\w\ldots\w w_n^*}{|w_1^*\w\ldots\w w_n^*|_h}=\frac{v_1^*\w\ldots\w v_n^*}{|v_1^*\w\ldots\w v_n^*|_h}.$$
We have thus defined a map between bundles $\Omega_J:\text{TR}^+\rightarrow K_M$ covering the identity map on $M$ (in fact, $\Omega_J$ maps into the unit circle bundle in 
$K_M$). 

\begin{definition}
The J-volume form on $\pi\in \text{TR}^+$ is the real-valued $n$-form $\vol_J:=\Omega_J[\pi]|_{\pi}$ obtained by restricting 
the form $\Omega_J[\pi]$ to $\pi$.
\end{definition}

Now let $\iota:L\rightarrow M$ be an $n$-dimensional totally real immersion. We can then obtain global versions of the above constructions as follows. 

\paragraph{Canonical bundle over {\boldmath $L$}.}Let $K_M[\iota]$ denote the pull-back of $K_M$ over $L$. This defines a complex line bundle over $L$ which depends on $\iota$. Specifically, the fibre over $p\in L$ is the fibre of $K_M$ over $\iota(p)\in M$.

Observe that any complex-valued $n$-form $\alpha$ on $T_pL$ defines a unique $n$-form $\widetilde{\alpha}$ 
on $T_{\iota(p)}M$ by identifying $T_pL$ with its image via $\iota_*$ and by setting, \textit{e.g.}, 
$$\widetilde{\alpha}[\iota(p)](J\iota_*(v_1),\dots,J\iota_*(v_n)):=i^n\alpha[p](v_1,\dots,v_n).$$ 
The totally real condition implies that this is an isomorphism: the bundle $K_M[\iota]$ is canonically isomorphic, via $\iota_*$, with the ($\iota$-independent) bundle $\Lambda^n(L,\C):=\Lambda^n(L,\R)\otimes\C$ of complex-valued $n$-forms on $L$. 
 
\paragraph{Canonical section.}Now assume that $L$ is oriented. Then $\Lambda^n(L,\R)$ is trivial, so $K_M[\iota]$ also is. We can build a global section of $K_M[\iota]$ using our previous linear-algebraic construction: $p\mapsto\Omega_J[\iota](p):=\Omega_J[\iota_*(T_pL)]$. We call $\Omega_J[\iota]$ the \textit{canonical section} of $K_M[\iota]$. If we restrict the form $\Omega_J[\iota]$ to $\iota_*(T_pL)$ we obtain a \emph{real-valued} positive $n$-form on $\iota(L)$, thus a volume form $\vol_J[\iota]:=\iota^*(\Omega_J[\iota])$ on $L$: we call it the \textit{$J$-volume form} of $L$, defined by $\iota$.  

When $L$ is compact we obtain a ``canonical volume'' $\int_L \vol_J[\iota]$, for $\iota\in\mathcal{P}$.  
If $\varphi$ is an orientation-preserving diffeomorphism of $L$ then $\vol_J[\iota\circ\varphi]=\varphi^*(\vol_J[\iota])$, just as for the standard volume form, thus 
$$\int_L\vol_J[\iota\circ\varphi]=\int_L\varphi^*\vol_J[\iota]=\int_L\vol_J[\iota].$$  Hence the canonical volume descends to define the \textit{$J$-volume functional}

\begin{equation*}
 \Vol_J:\TRL\rightarrow \R,\ \ L\mapsto\int_L\vol_J.
\end{equation*}

\paragraph{Maslov 1-form.}Now we assume we are given a unitary connection $\tnabla$ on $(K_M,h)$. Such connections always exist. The canonical section then induces a ``connection $1$-form'' $A[\iota]\in \Lambda^1(L,\C)$ defined by the identity
\begin{equation*}
 \tnabla\Omega_J[\iota]=A[\iota]\otimes\Omega_J[\iota],
\end{equation*}
where we are using the pull-back connection on $K_L[\iota]$. Notice that 
\begin{align*}
A[\iota](X)\cdot h(\Omega_J[\iota],\Omega_J[\iota])&=h(\tnabla_X\Omega_J[\iota],\Omega_J[\iota])\\
&=-h(\Omega_J[\iota],\tnabla_X\Omega_J[\iota])\\
&=-\overline{A[\iota]}(X)\cdot h(\Omega_J[\iota],\Omega_J[\iota]),
\end{align*}
where we are using the pull-back metric and the fact that $h(\Omega_J[\iota],\Omega_J[\iota])\equiv 1$. This calculation shows that $A$ actually takes values in $\Imm(\C)$: we shall write $A[\iota]=i\xi_J[\iota]$, calling $\xi_J[\iota]$ the \textit{Maslov $1$-form} on $L$, defined by $\iota$.  One sees from the definition that if $\varphi\in\Diff(L)$, then 
$\varphi^*\xi_J[\iota]=\xi_J[\iota\circ\varphi]$.

\begin{remark}
Of course, one must take into account the fact that this Maslov $1$-form depends on the choice of connection. It really becomes ``canonical'' only in situations 
where the connection itself is canonical. We will see below that this happens, for example, in the context of K\"ahler and almost K\"ahler manifolds.
\end{remark}

\textit{Notation.} We will often simplify notation by dropping the reference to the  immersion used. 
Since this is standard in other contexts, \textit{e.g.} when discussing the Riemannian volume, we expect it will not create any confusion.

\section{Canonical geometry in the Hermitian context}\label{s:hermitian}

Let us now assume that $(M,J)$ is almost Hermitian, $\textit{i.e.}$ we choose a Riemannian metric $\overline{g}$ on $M$ which is compatible with $J$, so 
$J$ is an isometry defining a Hermitian metric $h$ on $M$. We also choose a unitary connection $\tnabla$ on $M$. 
Let $L$ be an oriented totally real submanifold of $(M,J)$.

The structures on $M$ induce structures $h$, $\tnabla$ on $K_M$, which we can use to define the $J$-volume form 
and the Maslov 1-form on $L$. In this context we can use this data to define two natural flows of totally real submanifolds. The goal of this section is to introduce these flows and to compare them to MCF.

Notice that, in contrast to the previous section where we were given only a complex structure, in this section we will be able to also discuss Lagrangian submanifolds, defined via the induced positive $(1,1)$-form $\overline{\omega}$. 

Let $\widetilde{R}$ denote the curvature of $\tnabla$ and for $X,Y\in T_pM$ let 
$$\widetilde{P}(X,Y)=\overline{\omega}(\widetilde{R}(X,Y)\overline{e}_j,\overline{e}_j),$$
where $\overline{e}_1,\ldots,\overline{e}_{2n}$ is an orthonormal basis for $T_pM$.  This defines a closed 2-form $\widetilde{P}$ on $M$. 

According to Chern--Weil theory, $\frac{i}{2\pi}\tr_M\widetilde{R}$ represents the first Chern class of $(M,J)$, where we take the complex trace (\textit{i.e.}~with respect to the Hermitian metric) of the endomorphism part of $\widetilde{R}$. Recall that for a skew-Hermitian endomorphism $E$ we can compare real and complex traces by $\tr^{\R}(E\circ J)=2i\tr^{\C}E$. We deduce the following.

\begin{lem} The 2-form $\widetilde{P}$ satisfies
$\left[\frac{1}{2}\widetilde{P}\right]=2\pi c_1(M)$. 
\end{lem}

Notice that, because the Bianchi identity does not necessarily hold for $\widetilde{R}$, in general $\widetilde{P}$ is not twice the ``Ricci form''
$$\widetilde{\rho}(X,Y)=\overline{g}(\widetilde{R}(JX,\overline{e}_j)\overline{e}_j,Y)$$
and that, as $\tnabla$ is not necessarily the Levi-Civita connection $\overline{\nabla}$, the standard Ricci form $\overline{\rho}$ 
usually does not represent the first Chern class.

\subsection{{\boldmath $J$}-volume versus the Riemannian volume}

In the almost Hermitian context, given an immersion $\iota$, we can define the usual Riemannian volume form $\vol_g$ using the induced metric $g$. It is useful to compare this with the $J$-volume form.

Let us identify $T_pL$ with its image plane in $T_pM$ using $\iota_*$. Given a positively oriented basis $v_1,\dots,v_n$ of $T_pL$, recall that 
$$\vol_{g|p}:=\frac{v_1^*\wedge\dots\wedge v_n^*}{|v_1^*\wedge\dots\wedge v_n^*|_g},$$
where here $v_i^*$ denotes the standard dual basis of $T_p^*L$. Comparing this to $\vol_{J|p}:=\Omega_J[T_pL]_{|T_pL}$, we see that, up to the canonical identifications with $K_M$ discussed above, the two forms differ only by the choice of metric used in the normalization. These definitions imply that 
$$\vol_g(v_1,\dots,v_n)=\sqrt{\det g(v_i,v_j)}\quad\text{and}\quad \vol_J(v_1,\dots,v_n)=\sqrt{{\det}_{\C} h(v_i,v_j)}.$$
We can thus write $\vol_J=\rho_J\vol_g$, 
for $\rho_J$ determined by 
\begin{equation*}\rho_J:\text{TR}^+\rightarrow \R,\ \ \rho_J(\pi):=\vol_J(e_1,\dots,e_n)=\sqrt{{\det}_{\C}h_{ij}}
\end{equation*}
where  $e_1,\ldots,e_n$ is a positive orthonormal basis of $\pi$ and $h_{ij}=h(e_i,e_j)$. Notice that $\rho_J(\pi)$ is well-defined because it is independent of the orthonormal basis chosen for $\pi$.  
Analogously, 
\begin{equation}\label{h.mod.eq}
|e_1^*\w\ldots\w e_n^*|_h=({\det}_{\C}h_{ij})^{-\frac{1}{2}}.
\end{equation}

Since $h=\overline{g}-i\overline{\omega}$, if we set $\omega_{ij}=\bar{\omega}(e_i,e_j)$ we have that
\begin{align*}
{\det}_{\C}h_{ij}&=\sqrt{\det\left(\begin{array}{cc}\delta_{ij} &\omega_{ij}\\ -\omega_{ij} & \delta_{ij}\end{array}\right)}.
\end{align*}
Since $\omega_{ij}=\overline{g}(Je_i,e_j)$ 
and $-\omega_{ij}=\overline{g}(e_i,Je_j)$, we deduce $\det_{\C}h_{ij}=\sqrt{\det(\overline{g}_{ab})}$ where $\overline{g}_{ab}$ is the 
matrix of $\overline{g}$ with respect to the basis $\{e_1,\ldots,e_n,Je_1,\ldots,Je_n\}$.  Therefore
$${\det}_{\C}h_{ij}=\vol_{\overline{g}}(e_1,\ldots,e_n,Je_1,\ldots,Je_n).$$

We thus have a second expression for $\rho_J$: 
\begin{equation}\label{eq:rhoJ}
\rho_J(\pi)=\sqrt{\vol_{\overline{g}} (e_1,\dots,e_n,Je_1,\dots,Je_n)}.
\end{equation}
Hence we see that $\rho_J(\pi)\leq 1$ with equality if and only if $\pi$ is Lagrangian.

We can set $\rho_J(\pi)=0$ when $\pi$ is partially complex and extend the map $\Omega$ to all $n$-planes, just setting $\Omega_J[\pi]=0$ if $\pi$ is partially complex. 
This is particularly reasonable in this almost Hermitian setting, where there is a natural topology on the Grassmannian of $n$-planes: this choice of extension of $\Omega$ would be justified by the fact that it is the unique one which preserves the continuity of $\Omega$.

Applying these observations to submanifolds, we deduce that the $J$-volume functional provides a lower bound for the standard volume.

\begin{lem}\label{Lag.Jvol.lem}
For any compact oriented $n$-dimensional submanifold $L$ in an almost Hermitian
manifold $(M,J,\overline{g})$, we have $\Vol_J(L)\leq \Vol_g(L)$ with equality if and only if $L$ is Lagrangian. In particular the values of $\Vol_J$ and $\Vol_g$ and of their first derivatives coincide on Lagrangians.
\end{lem}

\begin{proof}
The first statement follows from (\ref{eq:rhoJ}). To prove the second, let $L_t$ be a 1-parameter family of totally real submanifolds such that $L_0$ is Lagrangian. Set $f(t):=\Vol_J(L_t)$ and $g(t):=\Vol_g(L_t)$. Then $f\leq g$ so $g-f\geq 0$. Equality holds when $t=0$: this is a minimum point, so it is necessarily critical. It follows that $f'(0)=g'(0)$. The result follows.
\end{proof}

\subsection{Formulae for the Maslov 1-form}

 For $X\in T_pL$ we can define an endomorphism $J\pi_J\tnabla_X:T_pL\rightarrow T_pL$, which depends 
linearly on $X$. The fact that it really is an endomorphism depends on the following calculation:
$$J\pi_J\tnabla_X(fY)=fJ\pi_J\tnabla_XY+J\pi_JX(f)Y=fJ\pi_J\tnabla_XY$$
since $\pi_J(Y)=0$.

\begin{prop}\label{xiJconn.prop}
The Maslov 1-form $\xi_J$ is the trace of the endomorphism $J\pi_J\tnabla$, \textit{i.e.}~for all $X\in T_pL$,
$$\xi_J(X)=\tr_L(J\pi_J\tnabla_X).$$
\end{prop}

\begin{proof}
Let $e_1,\ldots,e_n$ be a positively oriented orthonormal basis of $T_pL$. Recall that we defined the canonical section $\Omega_J$ of $K_M[\iota]$ in terms of the corresponding complexified dual forms in $\Lambda^{1,0}_pM$. We can alternatively express it in terms of the standard dual basis corresponding to the basis $e_1,\ldots,e_n, Je_1,\dots,Je_n$ of $T_pM$:
\begin{equation}\label{OmegaJ.alt.eq}
 \Omega_J(p)=\frac{(e_1^*+i(Je_1)^*)\w\ldots\w (e_n^*+i(Je_n)^*)}{|(e_1^*+i(Je_1)^*)\w\ldots\w (e_n^*+i(Je_n)^*)|_h}.
\end{equation}
For our purposes it is simpler to perform computations using the dual bundle $K_M^*[\iota]$. Set
\begin{equation}\label{nuL.dfn.eq}
\nu:=(e_1-iJe_1)\w\ldots\w (e_n-iJe_n),
\end{equation}
so that $\sigma_J:=\nu/|\nu|$ is a unit section of $K_M^*[\iota]$. 
Observe that
$$\tnabla_X\nu=\sum_{j=1}^n(e_1-iJe_1)\w\ldots\w(\tnabla_Xe_j-iJ\tnabla_Xe_j)\w\ldots\w(e_n-iJe_n),$$
using the fact that $\tnabla_XJ=0$.  Now,
\begin{align*}
\tnabla_Xe_j-iJ\tnabla_Xe_j&=\pi_L\tnabla_Xe_j-iJ\pi_L\tnabla_Xe_j+\pi_J\tnabla_Xe_j-iJ\pi_J\tnabla_Xe_j\\
&=\overline{g}(\pi_L\tnabla_Xe_j,e_k)(e_k-iJe_k)+\overline{g}(\pi_J\tnabla_Xe_j,Je_k)(Je_k+ie_k)\\
&=\big(\overline{g}(\pi_L\tnabla_Xe_j,e_k)-i\overline{g}(J\pi_J\tnabla_Xe_j,e_k)\big)(e_k-iJe_k).
\end{align*}
Hence,
\begin{equation}\label{nuL.eq}
\tnabla_X\nu=\big(\overline{g}(\pi_L\tnabla_Xe_j,e_j)-i\overline{g}(J\pi_J\tnabla_Xe_j,e_j)\big)\nu.
\end{equation}
Furthermore,
\begin{align}
\tnabla_X |\nu|_h^{-1}&=-\frac{|\nu|_h^{-3}}{2}\tnabla_X h(\nu,\nu)\nonumber\\
&=-|\nu|_h^{-3}\Ree\big(h(\tnabla_X\nu,\nu)\big)\nonumber\\
&=-|\nu|_h^{-3}\overline{g}(\pi_L\tnabla_Xe_j,e_j)h(\nu,\nu)\nonumber\\
&=-\overline{g}(\pi_L\tnabla_Xe_j,e_j)|\nu|_h^{-1}.\label{rhoL.eq}
\end{align}
It follows that
\begin{align*}
\tnabla_X\sigma_J
&=\tnabla_X|\nu|_h^{-1} \nu+|\nu|_h^{-1}\tnabla_X\nu\\
&=-\overline{g}(\pi_L\tnabla_Xe_j,e_j)\sigma_J+\big(\overline{g}(\pi_L\tnabla_Xe_j,e_j)-i\overline{g}(J\pi_J\tnabla_Xe_j,e_j)\big)\sigma_J\\
&=-i\overline{g}(J\pi_J\tnabla_Xe_j,e_j)\sigma_J.
\end{align*}
We conclude that
\begin{equation}\label{OmegaL.eq}
\tnabla_X\sigma_J=-i\tr_L(J\pi_J\tnabla_X)\sigma_J.
\end{equation}
This implies that the connection 1-form of the dual section $\Omega_J$ has the opposite sign, proving the claim.
\end{proof}

It follows from the definition that $i\,\d\xi_J$ is the curvature of the complex Hermitian connection on $K_M[\iota]$, thus $-\d\xi_J$ represents $2\pi\, c_1(K_M[\iota])$. The next proposition makes this more explicit, providing one of the key formulae for later results.

\begin{prop}\label{xiJprop}
  For all $X,Y\in T_pL$,
$$\d\xi_J(X,Y)=\tr_L\big(J\pi_J\widetilde{R}(X,Y)\big)=\frac{1}{2}\widetilde{P}(X,Y).$$
\end{prop}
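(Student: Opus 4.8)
The plan is to establish the two equalities in turn. For the first, $\d\xi_J(X,Y)=\tr_L(J\pi_J\widetilde{R}(X,Y))$, I would \emph{not} differentiate the pointwise formula $\xi_J(X)=\tr_L(J\pi_J\tnabla_X)$ directly, since the projection $\pi_J$ and the chosen frame both vary with the point and would produce many spurious terms. Instead I would exploit Proposition~\ref{xiJconn.prop}: by \eqref{OmegaL.eq} the canonical section $\Omega_L$ trivialises $K_M|_L$ with connection form $-i\xi_J$, so $\d\xi_J$ is, up to the factor $-i$, the curvature of this Hermitian line bundle. The idea is thus to compute the curvature operator $\widetilde{R}(X,Y)=\tnabla_X\tnabla_Y-\tnabla_Y\tnabla_X-\tnabla_{[X,Y]}$ acting on $\Omega_L$ in two independent ways and compare.

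On one hand, substituting \eqref{OmegaL.eq} repeatedly and noting that the resulting scalar products $\xi_J(X)\xi_J(Y)$ cancel under antisymmetrisation, I obtain $\widetilde{R}(X,Y)\Omega_L=-i\,\d\xi_J(X,Y)\,\Omega_L$. On the other hand, $\widetilde{R}(X,Y)$ is a tensorial, zeroth-order operator acting as a derivation on wedge powers, and $\Omega_L$ differs from $\nu_L$ of \eqref{nuL.eq} only by a positive real factor; hence the algebra leading to \eqref{nuL.eq} applies verbatim with $\tnabla_X$ replaced by $\widetilde{R}(X,Y)$, giving $\widetilde{R}(X,Y)\Omega_L=\big(\overline{g}(\pi_L\widetilde{R}(X,Y)e_j^*,e_j^*)-i\,\overline{g}(J\pi_J\widetilde{R}(X,Y)e_j^*,e_j^*)\big)\Omega_L$. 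Since $\tnabla$ is unitary and $|\Omega_L|_h\equiv 1$, this curvature scalar is purely imaginary, so the real coefficient vanishes and the imaginary part is precisely $-i\tr_L(J\pi_J\widetilde{R}(X,Y))$. Comparing the two expressions gives the first equality.

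For the second equality I would rewrite $\widetilde{P}(X,Y)=\overline{\omega}(\widetilde{R}(X,Y)\overline{e}_j,\overline{e}_j)=\tr_M\big(J\widetilde{R}(X,Y)\big)$, a real trace over $T_pM$ of the operator $S=J\widetilde{R}(X,Y)$. The key device, exactly as in the proof of Lemma~\ref{rho.lem}, is to evaluate this trace using the unit (but generally non-orthonormal) basis $\{e_1,\dots,e_n,Je_1,\dots,Je_n\}$ adapted to the totally real splitting, taking the trace through the corresponding dual basis, i.e.\ via the projections $\pi_L,\pi_J$. This yields $\widetilde{P}(X,Y)=\sum_i\overline{g}(\pi_L S e_i,e_i)+\overline{g}(\pi_J S Je_i,Je_i)$. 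Using $\tnabla J=0$ (so that $\widetilde{R}(X,Y)$ commutes with $J$), the fact that $J$ is an isometry, and Lemma~\ref{proj.lem}, each of the two sums simplifies to $\tr_L(J\pi_J\widetilde{R}(X,Y))$, producing the factor $\tfrac12$.

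I expect the main obstacle to be conceptual rather than computational, and to lie in the second equality. Unlike Lemma~\ref{rho.lem}, here the Bianchi identity is unavailable, since it fails for the non-Levi-Civita connection $\tnabla$; the whole point is that the non-orthonormal-basis trace together with Lemma~\ref{proj.lem} forces the two halves to coincide \emph{without} any symmetry of $\widetilde{R}$ in its curvature slots. This is exactly why the clean identity $\tr_L(J\pi_J\widetilde{R})=\tfrac12\widetilde{P}$ survives even though, as noted earlier, $\widetilde{P}$ is in general not twice the Ricci form $\widetilde{\rho}$. A secondary point requiring care is the bookkeeping of the factors of $i$ in the first equality, together with the justification, via unitarity of $\tnabla$, that the real part of the line-bundle curvature vanishes.
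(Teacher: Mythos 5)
Your proof is correct, and for the second equality it is essentially identical to the paper's: the authors also write $\widetilde{P}(X,Y)=\tr_M(J\widetilde{R}(X,Y))$, take the trace over the adapted unit basis $\{e_i,Je_i\}$ via the projections $\pi_L,\pi_J$, and use $\tnabla J=0$ together with Lemma~\ref{proj.lem} to identify the two halves, producing the factor $\tfrac12$ with no appeal to any Bianchi symmetry --- your remark on this point is exactly the right one. For the first equality, however, you take a genuinely different route. The paper \emph{does} differentiate the pointwise formula $\xi_J(X)=\tr_L(J\pi_J\tnabla_X)$ directly: it writes $J\pi_J\tnabla_Y$ as a matrix in a frame extended by $\pi_L\tnabla$-parallel transport, applies the Leibniz rule to $\pi_L\tnabla_X(J\pi_J\tnabla_Y)$, and observes that the offending terms $J\pi_J\tnabla_X(\pi_L\tnabla_Y Z)+J\pi_J\tnabla_Y(\pi_L\tnabla_X Z)$ are symmetric in $X,Y$ and hence drop out of the antisymmetrisation, leaving exactly $\tr_L(J\pi_J\widetilde{R}(X,Y))$. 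Your alternative --- computing the curvature of $K_M|_L$ once from the connection form $-i\xi_J$ of Proposition~\ref{xiJconn.prop} and once from the derivation action of $\widetilde{R}(X,Y)$ on $\nu_L$, then using unitarity (equivalently, the reality of $\xi_J$) to discard the real coefficient $\overline{g}(\pi_L\widetilde{R}(X,Y)e_j^*,e_j^*)$ --- is valid and arguably more conceptual: it reduces the first equality to the standard facts that the curvature of a line bundle in a trivialisation is $\d$ of the connection form and that the curvature of a determinant bundle is the (complex) trace of the curvature. What the paper's computation buys in exchange is that it never invokes the line-bundle interpretation at this point, and it exhibits explicitly which terms cancel and why; what yours buys is the automatic vanishing of the spurious real part and a cleaner explanation of where the trace $\tr_L(J\pi_J\widetilde{R})$ comes from.
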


\begin{proof}
By definition of the exterior derivative, 
$$\d\xi_J(X,Y)=X\cdot\tr_L(J\pi_J\tnabla_Y)-Y\cdot\tr_L(J\pi_J\tnabla_X)-\tr_L(J\pi_J\tnabla_{[X,Y]}).$$

If we let $v_1,\ldots,v_n$ denote a basis for $T_pL$, we can write $J\pi_J\tnabla_Y$ as a matrix $E$ with respect to this basis. Letting $v_1^*,\ldots,v_n^*$ denote the natural dual basis for $T_p^*L$, we have
$$\tr_L(J\pi_J\tnabla_Y)=v_i^*(Ev_i),$$
using summation convention.  As we observed, the unitary connection $\tnabla$ on $M$ induces a connection on $L$ given by $\pi_L\tnabla$.
We can use the connection $\pi_L\tnabla$ on $L$ to extend the basis $v_i$ locally by parallel transport to compute
$$X\cdot\tr_L(J\pi_J\tnabla_Y)=\pi_L\tnabla_X\big(v_i^*(Ev_i)\big)=v_i^*\big((\pi_L\tnabla_XE)v_i),$$
where we extend the connection $\pi_L\tnabla$ to endomorphisms.  

Using the Leibniz rule, we see that for any $Z\in T_pL$, 
\begin{align*}
\big(\pi_L\tnabla_X(J\pi_J&\tnabla_Y)\big)Z\\
&=\pi_L\tnabla_X(J\pi_J\tnabla_YZ)-J\pi_J\tnabla_Y(\pi_L\tnabla_XZ)\\
&=\pi_LJ\tnabla_X(\tnabla_YZ-\pi_L\tnabla_YZ)-J\pi_J\tnabla_Y(\pi_L\tnabla_XZ)\\
&=J\pi_J\tnabla_X\tnabla_YZ-J\pi_J\tnabla_X(\pi_L\tnabla_YZ)-J\pi_J\tnabla_Y(\pi_L\tnabla_XZ).
\end{align*}
The last two terms are symmetric in $X$ and $Y$ whereas $\d\xi_J$ is  skew in $X,Y$, so
\begin{align*}
\d\xi_J(X,Y)&=\tr_L(J\pi_J\tnabla_X\tnabla_Y-J\pi_J\tnabla_Y\tnabla_X-J\pi_J\tnabla_{[X,Y]}),
\end{align*}
from which the first part of the result follows.

We now notice that if $e_1,\ldots,e_n$ is an orthonormal basis for $T_pL$ we can extend it to a basis of $T_pM$ consisting of unit vectors by using $Je_1,\ldots,Je_n$.  Then we see that
\begin{align*}
\widetilde{P}(X,Y)&=\overline{\omega}(\widetilde{R}(X,Y)\overline{e}_j,\overline{e}_j)=\overline{g}(J\widetilde{R}(X,Y)\overline{e}_j,\overline{e}_j)=\tr_M(J\widetilde{R}(X,Y))\\
&=e_i^*(J\widetilde{R}(X,Y)e_i)+(Je_i)^*(J\widetilde{R}(X,Y)Je_i)\\
&=e_i^*(\pi_LJ\widetilde{R}(X,Y)e_i)+(Je_i)^*(\pi_JJ\widetilde{R}(X,Y)Je_i)\\
&=\overline{g}(\pi_LJ\widetilde{R}(X,Y)e_i,e_i)+\overline{g}(\pi_JJ\widetilde{R}(X,Y)Je_i,Je_i),
\end{align*}
where the projections are included because $\{e_1,\ldots,e_n,Je_1,\ldots,Je_n\}$ is not an orthogonal basis.  
Hence, since $\tnabla J=0$ (as the connection is complex),
\begin{align*}
\widetilde{P}(X,Y)&=\overline{g}(J\pi_J\widetilde{R}(X,Y)e_i,e_i)-\overline{g}(J\pi_JJ^2\widetilde{R}(X,Y)e_i,e_i)\\
&=2\overline{g}(J\pi_J\widetilde{R}(X,Y)e_i,e_i)\\
&=2\tr_L(J\pi_J\widetilde{R}(X,Y)),
\end{align*}
so we have the final result.
\end{proof}

Notice that if $M$ is K\"ahler and $L$ is Lagrangian, $\xi_J$ coincides with the 1-form $\xi=\tr_L(J\pi_{\perp}\overline{\nabla}_X)$ defined in 
Section \ref{kahler_mean_curvature}, cf.~\eq{xi.trace.eq}. In Section \ref{ss:lagr_defs} we showed that if $L$ is Lagrangian then $\xi$ is 
directly related to the mean curvature vector field; for general totally real submanifolds the relationship is more complicated, cf.~(\ref{H.hook.omega.eq}). This is reflected by the fact that MCF has good properties for Lagrangians only in the K\"ahler setting, and it typically does not 
for totally reals. The next proposition will show that, given a general totally real submanifold, the proper quantity to consider is $\xi_J$, not $\xi$. Likewise, there is an appropriate replacement for $H$, which we now define.

Let us use the metric $\bar{g}$ to define the transposed operators
$$\pi_J^t:T_pM\rightarrow (T_pL)^\perp, \ \ \pi_L^t:T_pM\rightarrow (J(T_pL))^\perp.$$
Observe that $(J(T_pL))^\perp=J(T_pL)^{\perp}$ since $X\in (J(T_pL))^{\perp}$ if and only if for all $Y\in T_pL$,
$$\overline{g}(Y,JX)=-\overline{g}(JY,X)=0,$$
which means $JX\in (T_pL)^{\perp}$ and thus $X\in J(T_pL)^{\perp}$.  
Then, using the  tangential projection $\pi_T$ defined using $\overline{g}$, one may check that
$$\pi_T J\tnabla\pi_L^t:T_pL\times T_pL\rightarrow T_pL$$
is $C^{\infty}$-bilinear on its domain, so it is a tensor and its trace is a well-defined vector on $L$. We now set 
\begin{equation}\label{HJ.eq}
H_J:=-J(\mbox{tr}_L(\pi_T J\tnabla\pi_L^t)).
\end{equation}
This is a well-defined vector field on $L$. 

Let $\widetilde{T}$ denote the torsion of $\tnabla$:
$$\widetilde{T}(X,Y)=\tnabla_XY-\tnabla_YX-[X,Y].$$
We use it to define the vector field
\begin{equation}\label{TJ.eq}
T_J:=-\bar{g}(\pi_LJ\widetilde{T}(e_j,e_i),e_i)Je_j.
\end{equation}
Both $H_J$ and $T_J$ take values in the bundle $J(TL)$.

The following important result should be compared to Proposition \ref{H.hook.omega}.

\begin{prop}\label{HJ-prop} Let $\xi_J^\sharp$ denote the vector field on $L$ corresponding to the 1-form $\xi_J$ using the induced metric $g$. 
Then $\xi_J^\sharp=JH_J+JT_J$, so
$$\overline{\omega}(H_J+T_J,X)=\xi_J(X).$$
\end{prop}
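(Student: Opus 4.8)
The plan is to reduce everything to the single scalar identity $\xi_J(X)=\overline{\omega}(H_J+T_J,X)$ for all $X\in T_pL$; the vector statement $\xi_J^\sharp=JH_J+JT_J$ then follows for free. Indeed, since $H_J$ and $T_J$ take values in $J(TL)$, the vector $J(H_J+T_J)$ lies in $TL$, and $\overline{\omega}(H_J+T_J,X)=\overline{g}(J(H_J+T_J),X)$; because $g=\overline{g}|_{TL}$ is nondegenerate on $TL$ and $\xi_J^\sharp$ is by definition the $g$-dual of $\xi_J$, matching $\overline{g}(\xi_J^\sharp,X)=\overline{g}(J(H_J+T_J),X)$ for every tangent $X$ forces $\xi_J^\sharp=J(H_J+T_J)$. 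So I fix $p\in L$, choose a $g$-orthonormal frame $e_1,\dots,e_n$ of $T_pL$, and compute all traces over this frame (with summation over $i$ understood).

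The first step is to write both quantities as structurally parallel traces. From the definition of $\xi_J$ together with Lemma \ref{proj.lem} (which gives $J\pi_J=\pi_LJ$),
\[
\xi_J(X)=\tr_L(J\pi_J\tnabla_X)=\overline{g}(J\pi_J\tnabla_Xe_i,e_i).
\]
For $H_J$, I first record that $\pi_TJ\pi_L^t=0$: the image of $\pi_L^t$ is $(J(T_pL))^\perp=J((T_pL)^\perp)$, so $J\pi_L^t$ lands in $(T_pL)^\perp$ and is annihilated by $\pi_T$. This both confirms the $C^\infty$-bilinearity asserted in the definition of $H_J$ and lets me drop $\pi_T$ when pairing against the tangent vector $X$, yielding
\[
\overline{\omega}(H_J,X)=\overline{g}(JH_J,X)=\overline{g}\big(J\tnabla_{e_i}(\pi_L^te_i),X\big).
\]

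The heart of the argument is an integration by parts exploiting that $\tnabla$ is unitary. Since $J\pi_L^te_i\in(T_pL)^\perp$ and $X\in T_pL$, the function $\overline{g}(J\pi_L^te_i,X)$ vanishes identically on $L$; differentiating along $e_i$ and using $\tnabla J=0$ and $\tnabla\overline{g}=0$ gives $\overline{g}(J\tnabla_{e_i}(\pi_L^te_i),X)=-\overline{g}(J\pi_L^te_i,\tnabla_{e_i}X)$. Applying the skew-adjointness of $J$, the transpose relation $\overline{g}(\pi_L^t a,b)=\overline{g}(a,\pi_Lb)$, and Lemma \ref{proj.lem} once more, this rearranges to
\[
\overline{\omega}(H_J,X)=\overline{g}(J\pi_J\tnabla_{e_i}X,e_i).
\]
Thus $\xi_J(X)$ and $\overline{\omega}(H_J,X)$ differ only in which slot $\tnabla$ differentiates, so that
\[
\xi_J(X)-\overline{\omega}(H_J,X)=\overline{g}\big(J\pi_J(\tnabla_Xe_i-\tnabla_{e_i}X),e_i\big).
\]

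Finally I substitute $\tnabla_Xe_i-\tnabla_{e_i}X=[X,e_i]+\widetilde{T}(X,e_i)$. The bracket $[X,e_i]$ is tangent to $L$, hence killed by $\pi_J$, while the torsion term reproduces exactly $\overline{\omega}(T_J,X)$: unwinding the definition \eq{TJ.eq} of $T_J$ together with $J\pi_J=\pi_LJ$ gives $\overline{g}(JT_J,X)=\overline{g}(\pi_LJ\widetilde{T}(X,e_i),e_i)=\overline{g}(J\pi_J\widetilde{T}(X,e_i),e_i)$, which closes the identity. The one genuinely delicate point is the bookkeeping in the middle step: tracking precisely which factor the connection differentiates, and checking that the ``direction swap'' $\tnabla_Xe_i\leftrightarrow\tnabla_{e_i}X$ is paid for exactly by the torsion vector $T_J$ while the bracket is harmlessly absorbed by $\pi_J$. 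In the K\"ahler, Levi-Civita case $\widetilde{T}=0$, so $T_J$ disappears and one recovers the torsion-free relationship underlying Proposition \ref{H.hook.omega}.
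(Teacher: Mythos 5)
Your argument is correct and is essentially the paper's own proof run in the opposite direction: the paper starts from $\xi_J(X)=\overline{g}(J\pi_J\tnabla_Xe_i,e_i)$, swaps $\tnabla_Xe_i$ for $\tnabla_{e_i}X$ via the torsion and $\pi_J[X,e_i]=0$, and integrates by parts against the vanishing pairing $\overline{g}(JX,\pi_L^{\rm t}e_i)=0$ to reach $g(X,JH_J)+g(X,JT_J)$, whereas you start from $\overline{\omega}(H_J,X)$ and integrate by parts to reach $\overline{g}(J\pi_J\tnabla_{e_i}X,e_i)$ before matching against $\xi_J$. The ingredients (Lemma \ref{proj.lem}, $\tnabla J=0$, the transpose relation, and the unwinding of $T_J$) are identical, so there is nothing further to compare.
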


\begin{proof}
This result follows from elementary computations.  
We calculate:
\begin{equation*}
\xi_J(X)=\overline{g}(J\pi_J\tnabla_Xe_i,e_i)=\overline{g}(J\pi_J\tnabla_{e_i}X+J\pi_J\widetilde{T}(X,e_i),e_i)
\end{equation*}
since $\pi_J[X,e_i]=0$.  Thus, as $J\pi_J=\pi_LJ$ by Lemma \ref{proj.lem} and $\tnabla$ is a complex connection we have that
\begin{align*}
\overline{g}(J\pi_J\tnabla_Xe_i,e_i)&=\overline{g}(\pi_LJ\tnabla_{e_i}X+\pi_LJ\widetilde{T}(X,e_i),e_i)\\
&=\overline{g}(\tnabla_{e_i}JX,\pi_L^{\rm t}e_i)+\overline{g}(J\widetilde{T}(X,e_i),\pi_L^{\rm t}e_i).
\end{align*}
Since $\overline{g}(JX,\pi_L^{\rm t}e_i)=\overline{g}(\pi_LJX,e_i)=0$ we see that 
\begin{align*}
\overline{g}(\tnabla_{e_i}JX,\pi_L^{\rm t}e_i)&=-\overline{g}(JX,\tnabla_{e_i}\pi_L^{\rm t}e_i)\\
&=\overline{g}(X,J\tnabla_{e_i}\pi_L^{\rm t}e_i)\\
&=\overline{g}(X,\pi_TJ\tnabla_{e_i}\pi_L^{\rm t}e_i)=g(X,JH_J).
\end{align*}
Replacing $X=\overline{g}(X,e_j)e_j$ since $e_1,\ldots,e_n$ is an orthonormal basis for $T_pL$, we deduce that
\begin{align*}
\overline{g}(J\widetilde{T}(X,e_i),\pi_L^{\rm t}e_i)&=\overline{g}(J\widetilde{T}(\overline{g}(X,e_j)e_j,e_i),\pi_L^{\rm t}e_i)\\
&=\overline{g}(X,e_j)\overline{g}(J\widetilde{T}(e_j,e_i),\pi_L^{\rm t}e_i)\\
&=\overline{g}(X,\overline{g}(J\widetilde{T}(e_j,e_i),\pi_L^{\rm t}e_i)e_j)\\
&=g(X,\overline{g}(\pi_LJ\widetilde{T}(e_j,e_i),e_i)e_j)\\
&=g(X,JT_J).
\end{align*}
We conclude that
$$\xi_J(X)=g(JH_J,X)+g(JT_J,X).$$
The result follows.
\end{proof}

\begin{remark}
The key point in the definitions of $\xi_J$ and $H_J$ is the identification of quantities containing $\widetilde{\nabla}$ which are adapted to totally real geometry and which exhibit tensorial behaviour analogous to the second fundamental form $\pi_\perp\overline{\nabla}$ used in standard Riemannian geometry.
\end{remark}

\subsection{Canonical choices in special cases}\label{ss:special}

The theory defined up to here depends on various choices: $J$, $\overline{g}$, $\tnabla$. We have tried to emphasize the role played by each of these structures on $M$. 

On a given manifold $M$ there is usually no canonical choice of such structures. Furthermore, even after making the choice of $J$ and $\overline{g}$, \textit{i.e.}~in an almost Hermitian manifold, there is no canonical choice of unitary connection $\tnabla$. Even Gauduchon's study of canonical connections \cite{Gau} leaves us with a 1-parameter family of connections to choose from.

However, there are two special cases where we can restrict the number of arbitrary choices made. We present these below, starting with the obvious one.

\paragraph{{\boldmath $M$} K\"ahler.} In this case $J$ and $\overline{g}$ are chosen so as to have very strong algebraic, geometric and analytic properties. There is then a canonical choice of unitary connection: we may use the Levi-Civita connection $\overline{\nabla}$. The advantage of this choice is that it is also torsion-free, leading to several simplifications in our formulae. Furthermore, (\ref{overline.rho.eq}) shows that 
$$\widetilde{P}(X,Y)=\overline{\omega}(\overline{R}(X,Y)\overline{e}_j,\overline{e}_j)=2\overline{\rho}(X,Y).$$

\paragraph{{\boldmath $M$} almost K\"ahler.} 
The Grassmannian of totally real $n$-planes in $M$ depends on the choice of $J$. Changing $J$ will produce a different set of totally real submanifolds. For example, a totally real submanifold may become partially complex under a change of $J$.

This may appear to be in contrast with Section \ref{ss:canonical_data}, where we characterized totally real planes $\pi$ in terms of the line bundle $K_M$. Indeed, it is known that complex line bundles are 
completely determined by their first Chern class $c_1$. Since this is an integral class, it is for example invariant under continuous deformations of $J$. 
However, notice that our characterization does not depend solely on the line bundle: it also depends on a particular pairing between the line bundle and $\pi$, 
\textit{i.e.}~on the identification of the line bundle with the space $K_M$ of $(n,0)$-forms, and this identification does depend on the choice of $J$.

Symplectic geometry provides a well-known framework within which $c_1$ is fixed: specifically, $c_1$ can be defined using any $J$ compatible with the given 
$\overline{\omega}$. From the almost Hermitian point of view, this is the realm of almost K\"ahler manifolds. Specifically, we start with a symplectic manifold 
$(M,\overline{\omega})$ and we add a choice of
a Riemannian metric $\overline{g}$ and an orthogonal almost complex structure $J$ 
such that $\overline{\omega}(X,Y)=\overline{g}(JX,Y)$
for all tangent vectors $X,Y$ on $M$.  

We let $\overline{\nabla}$ denote the Levi-Civita connection of $\overline{g}$.  It is well-known (see, for example, \cite{ApDr}) that
$$\overline{\nabla}_{JX}J=(\overline{\nabla}_XJ)J=-J(\overline{\nabla}_XJ).$$ 

In this setting all of Gauduchon's connections coincide, defining a canonical unitary connection known as the Chern connection.
Let $\widetilde{\nabla}$ denote the 
Chern connection and let $\widetilde{T}$ denote the torsion of $\tnabla$.  Specifically,
$$ \tnabla_XY= \overline{\nabla}_XY +\frac{1}{2}(\overline{\nabla}_XJ)(JY) $$
and
$$\widetilde{T}(X,Y)=\frac{1}{2}(\overline{\nabla}_XJ)(JY)-\frac{1}{2}(\overline{\nabla}_YJ)(JX).$$
Then
\begin{align*}
\widetilde{T}(JX,Y)&=\frac{1}{2}(\overline{\nabla}_{JX}J)(JY)-\frac{1}{2}(\overline{\nabla}_YJ)(J^2X)\\
&=-\frac{1}{2}J(\overline{\nabla}_XJ)Y+\frac{1}{2}J(\overline{\nabla}_YJ)(JX)\\
&=-J\widetilde{T}(X,Y),
\end{align*}
from which we deduce that $\widetilde{T}(JX,Y)=\widetilde{T}(X,JY)$, i.e.~the $(1,1)$ part of the torsion of $\tnabla$ vanishes (in fact, this characterizes the 
Chern connection amongst complex metric connections).

\section{The {\boldmath $J$}-volume functional}\label{s:Jvol}

Proposition \ref{HJ-prop} relates the Maslov 1-form to a vector field $H_J$. In the analogous Proposition \ref{H.hook.omega} $H_J$ coincided with the mean curvature vector field. We thus want to further investigate the geometric content of $H_J$. 
To this end we will use the variation formulae for the $J$-volume functional computed in \cite{LotayPacini2}. 

\subsection{The gradient of {\boldmath $\Vol_J$}}

Recall that the $J$-volume functional $\Vol_J$ is defined on the space $\cal{T}$ whose tangent space, at a given totally real submanifold $L\subseteq M$, is isomorphic to the space of vector fields of the form $JY$, where $Y$ is tangent along $L$. In computing the first variation of this functional it thus suffices to restrict to such vector fields. The following formula is proved in \cite{LotayPacini2}.

\begin{prop}\label{first.var.prop}
Let $\iota_t:L\rightarrow L_t\subseteq M$ be compact totally real submanifolds in an almost Hermitian manifold and let 
$\frac{\partial}{\partial t}\iota_t|_{t=0}=JY$ for $Y$ tangential.  Then 
$$\frac{\partial}{\partial t}\Vol_J(L_t)|_{t=0}=
-\int_L\overline{g}(JY,H_J+S_J)\vol_J$$
where $H_J$ is given by \eq{HJ.eq} and for $p\in L$ and an orthonormal basis $e_1,\ldots,e_n$ for $T_pL$ we have 
\begin{equation}\label{SJ.eq}
S_J=-\overline{g}(\pi_L\widetilde{T}(Je_j,e_i),e_i)Je_j.
\end{equation}

\end{prop}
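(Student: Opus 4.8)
The plan is to reduce the statement to a pointwise computation of the derivative of the local density $\rho_J=\sqrt{{\det}_\C h_{ij}}$, and then to recognise the resulting complex trace as the pairing of $JY$ with $H_J+S_J$. Let $F\colon L\times(-\epsilon,\epsilon)\to M$ be the variation, $F(\cdot,t)=\iota_t$, with variation field $V=dF(\partial_t)$ and $V|_{t=0}=JY$. Fix a frame $v_1,\dots,v_n$ on $L$ with $t$-independent dual coframe $v^1,\dots,v^n$ and set $V_i=dF(v_i)$. Since $\vol_J[\iota_t]=\sqrt{{\det}_\C h(V_i,V_j)}\;v^1\wedge\dots\wedge v^n$, it suffices to differentiate the scalar $\sqrt{{\det}_\C h(V_i,V_j)}$. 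Writing $h_{ij}=\bar g_{ij}-i\omega_{ij}$ with $\bar g$ symmetric and $\omega$ skew shows ${\det}_\C h_{ij}$ is real and positive, so Jacobi's formula gives the pointwise identity
$$\frac{\partial}{\partial t}\vol_J=\tfrac12\,\tr_\C\!\big(h^{-1}\dot h\big)\,\vol_J,\qquad \dot h_{ij}:=\partial_t\,h(V_i,V_j),$$
the trace being automatically real.

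Next I would compute $\dot h_{ij}$ using the unitary connection $\tnabla$, which is compatible with $h$, so that $\partial_t h(V_i,V_j)=h(\tnabla_{\partial_t}V_i,V_j)+h(V_i,\tnabla_{\partial_t}V_j)$. Commuting derivatives via $\tnabla_{\partial_t}V_i=\tnabla_{V_i}V+\widetilde T(V,V_i)$ (valid since $[\partial_t,v_i]=0$) and evaluating at $t=0$ with $v_i=e_i$ orthonormal and $V=JY$ splits $\dot h_{ij}$ into a \emph{connection part} built from $\tnabla_{e_i}(JY)$ and a \emph{torsion part} built from $\widetilde T(JY,e_i)$. In the connection part I would use the pointwise Leibniz manipulation behind Proposition \ref{HJ-prop}: since $\bar g(JY,\pi_L^{\rm t}e_i)=\bar g(\pi_L JY,e_i)=0$, the derivative on $Y$ may be transferred onto $\pi_L^{\rm t}e_i$, and the identity $\bar g(\tnabla_{e_i}(JY),\pi_L^{\rm t}e_i)=g(Y,JH_J)=-\bar g(JY,H_J)$ produces the term $-\bar g(JY,H_J)$. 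The torsion part, reorganised using Lemma \ref{proj.lem} and $\tnabla J=0$, should produce exactly $-\bar g(JY,S_J)$ with $S_J$ as in \eqref{SJ.eq}; any residual terms in which a derivative still falls on $Y$ assemble into a tangential divergence and integrate to zero over the closed manifold $L$. This yields
$$\frac{\partial}{\partial t}\Vol_J(L_t)\big|_{t=0}=-\int_L \bar g(JY,H_J+S_J)\,\vol_J.$$

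The step requiring the most care, and the genuine heart of the matter, is the presence of the inverse matrix $h^{-1}$ in the complex trace $\tr_\C(h^{-1}\dot h)=h^{ji}\dot h_{ij}$. For a non-Lagrangian $L$ the matrix $h_{ij}=\delta_{ij}-i\omega_{ij}$ is not the identity, so this is not a naive orthonormal trace: one must show that contracting against $h^{-1}$ is precisely the operation that converts the orthogonal data ($\bar g$, $\pi_\perp$, $\tnabla$) into the \emph{oblique} projection $\pi_L$ along $J(TL)$ and its transposes $\pi_L^{\rm t},\pi_T$ which appear in the definitions \eqref{HJ.eq} and \eqref{SJ.eq} of $H_J$ and $S_J$. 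Equivalently, one must verify that the real scalar $\tfrac12 h^{ji}\dot h_{ij}$ unpacks into exactly $-\bar g(JY,H_J+S_J)$ up to a divergence. A useful consistency check is the Lagrangian K\"ahler case, where $\omega_{ij}=0$ so $h_{ij}=\delta_{ij}$ and $h^{-1}$ is trivial, $\tnabla=\overline{\nabla}$ is torsion-free so $S_J=0$ and $\pi_L=\pi_T$, and the formula collapses to the classical first variation $-\int_L\bar g(JY,H)\,\vol_g$, consistent with Lemma \ref{Lag.Jvol.lem}.
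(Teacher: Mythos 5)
First, a point of context: the paper does not actually prove Proposition \ref{first.var.prop} here — it quotes the formula from \cite{LotayPacini2} — so there is no in-text proof to compare against. That said, your strategy (write $\vol_J[\iota_t]=\sqrt{{\det}_\C h(V_i,V_j)}\,v^1\wedge\dots\wedge v^n$, apply Jacobi's formula, use $h$-compatibility of $\tnabla$ and the commutation $\tnabla_{\partial_t}V_i=\tnabla_{V_i}V+\widetilde{T}(V,V_i)$ to split $\dot h$ into a connection part and a torsion part) is the natural one for this statement, and your setup and consistency check in the Lagrangian K\"ahler case are correct.

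The problem is that you explicitly flag ``the genuine heart of the matter'' — converting $\tfrac12\tr_\C(h^{-1}\dot h)$ into $-\overline{g}(JY,H_J+S_J)$ — and then leave it as something ``one must verify''. This is not a peripheral verification: it is the only place where the oblique projections $\pi_L^{\rm t}$, $\pi_T$ appearing in \eq{HJ.eq} and \eq{SJ.eq} can enter, so without it the proposition is not proved. The missing step is, fortunately, a short lemma: for an orthonormal basis $e_1,\dots,e_n$ of the totally real plane $T_pL$, viewed as a complex basis of $T_pM$, the $h$-dual basis is exactly $\{\pi_L^{\rm t}e_i\}$. Indeed $h(e_k,\pi_L^{\rm t}e_i)=\overline{g}(e_k,\pi_L^{\rm t}e_i)-i\,\overline{g}(Je_k,\pi_L^{\rm t}e_i)=\overline{g}(\pi_Le_k,e_i)-i\,\overline{g}(\pi_L(Je_k),e_i)=\delta_{ki}$, since $\pi_L(Je_k)=0$; hence $h^{ij}e_j=\pi_L^{\rm t}e_i$ and
$$\tfrac12\tr_\C(h^{-1}\dot h)=\Ree\, h\big(\tnabla_{e_i}(JY)+\widetilde{T}(JY,e_i),\,\pi_L^{\rm t}e_i\big)=\overline{g}\big(\tnabla_{e_i}(JY),\pi_L^{\rm t}e_i\big)+\overline{g}\big(\pi_L\widetilde{T}(JY,e_i),e_i\big).$$
The first summand is $-\overline{g}(JY,H_J)$ by the identity you quote from Proposition \ref{HJ-prop}, and writing $Y=g(Y,e_j)e_j$ identifies the second with $-\overline{g}(JY,S_J)$. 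Note also that the ``residual tangential divergence'' you invoke does not actually arise: the boundary term in transferring the derivative is $e_i\big(\overline{g}(JY,\pi_L^{\rm t}e_i)\big)$, which vanishes identically because $JY$ is a section of $J(TL)$ while $\pi_L^{\rm t}e_i$ takes values in $(J(TL))^\perp$; so the first variation formula holds pointwise, with no integration by parts. With the dual-basis lemma inserted, your argument closes and gives the stated formula.
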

It follows that, with respect to the Riemannian metric $G$ on $\TRL$ defined as $$G_L(JX,JY):=\int_L\overline{g}(X,Y)\vol_J$$ for $JX,JY\in T_L\TRL$, we see  $H_J+S_J$ is
the negative gradient of  $\Vol_J$.

\paragraph{Comparison with the Maslov form.}We now can compare the gradient of the $J$-volume with the vector field $-J\xi_J^\#=H_J+T_J$ defined by the Maslov form.  From \eq{HJ.eq}, \eq{TJ.eq} and \eq{SJ.eq} we see that
$$(H_J+S_J)-(H_J+T_J)=S_J-T_J=\overline{g}(\pi_L(J\widetilde{T}(e_j,e_i)-\widetilde{T}(Je_j,e_i)),e_i)Je_j,$$
so in a general almost Hermitian manifold the two objects are different. Futhermore, at this level the critical points of $\Vol_J$ do not appear to have particular geometric significance. 

When $M$ is an almost K\"ahler manifold endowed with the Chern connection then $\widetilde{T}(JX,Y)=-J\widetilde{T}(X,Y)$ so
$S_J=-T_J$. In the K\"ahler case, $\tnabla=\overline{\nabla}$ is the Levi-Civita connection and hence is torsion-free, \textit{i.e.}~$\widetilde{T}=0$, so $S_J=T_J=0$. We deduce the following important result.

\begin{thm}\label{thm:flows_coincide}
In K\"ahler manifolds, the negative gradient of the $J$-volume coincides with the vector field $-J\xi_J^\#$ defined by the Maslov form.
\end{thm}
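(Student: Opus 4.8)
The plan is to reduce the statement to the identity of two explicit vector fields already computed in the preceding propositions, and then to invoke the vanishing of torsion in the Kähler case. By Proposition \ref{first.var.prop}, the negative gradient of $\Vol_J$ with respect to the metric $G$ is $H_J+S_J$, with $H_J$ and $S_J$ given by \eq{HJ.eq} and \eq{SJ.eq}. By Proposition \ref{HJ-prop}, the vector field determined by the Maslov form satisfies $\xi_J^\sharp=JH_J+JT_J=J(H_J+T_J)$; applying $-J$ and using $J^2=-1$ gives $-J\xi_J^\#=H_J+T_J$. Hence the theorem is equivalent to the single identity $S_J=T_J$.

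Next I would recall, from the discussion of the two special settings in Section \ref{ss:special}, that when $M$ is Kähler we may take the canonical unitary connection $\tnabla$ to be the Levi-Civita connection $\overline{\nabla}$. This connection is torsion-free, so its torsion $\widetilde{T}$ vanishes identically.

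It then remains only to inspect the defining formulae. Both $S_J$ in \eq{SJ.eq} and $T_J$ in \eq{TJ.eq} are obtained by tracing a contraction of $\widetilde{T}$ against the projection $\pi_L$ and the metric $\overline{g}$; with $\widetilde{T}\equiv 0$ each of these expressions vanishes, so $S_J=T_J=0$ and in particular $S_J=T_J$. Equivalently, one may appeal to the comparison computed just before the statement, namely $S_J-T_J=\overline{g}(\pi_L(J\widetilde{T}(e_j,e_i)-\widetilde{T}(Je_j,e_i)),e_i)Je_j$, which is manifestly torsion-dependent and hence vanishes when $\widetilde{T}=0$. Either route yields $H_J+S_J=H_J+T_J=-J\xi_J^\#$, completing the proof.

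There is essentially no analytic obstacle here: the result is a corollary of the two variational/geometric formulae established earlier, and the Kähler case collapses the discrepancy term. The only genuine content — and the point worth checking carefully — is that the difference between the $\Vol_J$-gradient and the Maslov vector field is \emph{exactly} the torsion-dependent quantity $S_J-T_J$, with no residual contribution from $H_J$ or from the curvature; this is precisely what Propositions \ref{first.var.prop} and \ref{HJ-prop} guarantee, so that setting $\widetilde{T}=0$ suffices.
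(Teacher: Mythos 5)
Your proposal is correct and follows exactly the paper's reasoning: the paper likewise identifies the negative gradient as $H_J+S_J$ (Proposition \ref{first.var.prop}) and $-J\xi_J^\#$ as $H_J+T_J$ (Proposition \ref{HJ-prop}), and then notes that in the K\"ahler case $\tnabla=\overline{\nabla}$ is torsion-free, so $S_J=T_J=0$. Nothing is missing.
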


In particular, in the K\"ahler setting this result allows us to transfer to either context any information already available for the other. For example, we can now characterize critical points of the $J$-volume functional as those for which $\Omega_J$ is parallel, and we discover that moving a submanifold in the direction $-J\xi_J^\#$ produces a monotone change in a certain quantity, namely the $J$-volume. 

\begin{remark} The condition that $\Omega_J$ is parallel does not imply $\rho_J$ is constant. Indeed, recall from \eqref{OmegaL.eq} that
\begin{equation*}
(\tnabla_X\Omega_J)(e_1,\ldots,e_n)=i\xi_J(X)\Omega_J(e_1,\ldots,e_n)=i\xi_J(X)\rho_J.
\end{equation*}
As the right hand side is imaginary, so the condition $\tilde{\nabla}\Omega_J=0$  affects only the imaginary part of the left hand side. We can rewrite the left hand side as
\begin{align*}
(\tnabla_X\Omega_J)(e_1,\ldots,e_n)&=\tnabla_X(\Omega_J(e_1,\ldots,e_n))-\sum_{i=1}^n\Omega_J(e_1,\ldots,\tnabla_Xe_i,\ldots,e_n)\\
&=X(\rho_J)-\sum_{i=1}^n\Omega_J(e_1,\ldots,\tnabla_Xe_i,\ldots,e_n).
\end{align*}
The variation $X(\rho_J)$ of $\rho_J$ is real, so it is not affected by $\Omega_J$ being parallel.
\end{remark}

Recall from Lemma \ref{Lag.Jvol.lem} that the $J$-volume coincides with the standard volume on Lagrangians and that, restricted to Lagrangians, the set of critical points of the $J$-volume and of minimal Lagrangians coincide.  In the appropriate context we can now improve this result by eliminating the Lagrangian hypothesis.

\begin{prop}\label{min.Lag.lim.prop}
Let $M$ be a K\"ahler--Einstein manifold with $\overline{\Ric}\neq 0$.  Then the set of critical points of the $J$-volume coincides with the set of minimal Lagrangian submanifolds.
\end{prop}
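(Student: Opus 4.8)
The plan is to prove the two inclusions separately, using Theorem~\ref{thm:flows_coincide} to reduce criticality of $\Vol_J$ to the vanishing of the Maslov form, and then exploiting the Einstein condition to force any such critical point to be Lagrangian. Throughout I write $\omega=\iota^*\overline{\omega}$ and $\rho=\iota^*\overline{\rho}$.

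First I would observe that, by Theorem~\ref{thm:flows_coincide}, in a K\"ahler manifold the negative gradient of $\Vol_J$ equals $-J\xi_J^\sharp$, which vanishes if and only if $\xi_J\equiv 0$. Thus $L$ is critical for $\Vol_J$ precisely when $\xi_J\equiv 0$, and in that case certainly $\d\xi_J=0$. Now I would compute $\d\xi_J$: by Proposition~\ref{xiJprop} we have $\d\xi_J=\tfrac12\widetilde{P}|_L$, and the identity $\widetilde{P}=2\overline{\rho}$ recorded in Section~\ref{ss:special} for the K\"ahler case gives $\d\xi_J=\rho$. Imposing the K\"ahler--Einstein condition $\overline{\rho}=\lambda\overline{\omega}$ then yields $\d\xi_J=\lambda\omega$. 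This is the crux of the argument and the only genuine computation: since $\overline{\Ric}\neq 0$ forces $\lambda\neq 0$, any critical point satisfies $\lambda\omega=0$, hence $\omega\equiv 0$, so it is Lagrangian. The hypothesis $\overline{\Ric}\neq0$ enters exactly here; in the Ricci-flat case $\lambda=0$ there is no constraint on $\omega$, which is why non-Lagrangian (special totally real) critical points are permitted in Calabi--Yau manifolds.

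It remains to match the Lagrangian critical points of $\Vol_J$ with minimal Lagrangians. On the Lagrangian locus I would use Proposition~\ref{prop:same_crit_pts}, specialized to the K\"ahler case, where $S_J=T_J=0$ and hence $H_J=H$; consequently the negative gradient $-J\xi_J^\sharp=H_J+T_J$ of $\Vol_J$ reduces to the mean curvature vector $H$. Therefore a Lagrangian $L$ satisfies $\xi_J\equiv0$ if and only if $H\equiv0$, i.e.\ if and only if it is minimal. Combining the steps: a critical point of $\Vol_J$ is first Lagrangian by the Einstein argument and then minimal by this gradient comparison, while conversely a minimal Lagrangian has $H=0$, hence $H_J=0$ and $\xi_J=0$, so it is critical. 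The two sets coincide. I do not anticipate any serious obstacle: once the curvature identity $\d\xi_J=\lambda\omega$ is in place, the argument is bookkeeping with results already established, the only point requiring care being the essential use of $\lambda\neq0$.
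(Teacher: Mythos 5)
Your argument is correct and follows the paper's own proof essentially step for step: both reduce criticality to $\xi_J\equiv 0$ via Theorem~\ref{thm:flows_coincide}, use $\d\xi_J=\rho=\lambda\omega$ with $\lambda\neq 0$ to force the Lagrangian condition, and then identify $H_J$ with $H$ on Lagrangians to conclude minimality, with Proposition~\ref{prop:same_crit_pts} giving the converse. The only difference is expository: you unpack the identity $\d\xi_J=\rho$ via $\widetilde{P}=2\overline{\rho}$, which the paper leaves implicit.
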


\begin{proof}
Let $\iota:L\rightarrow M$ be a critical point. According to Theorem \ref{thm:flows_coincide} it follows that $\xi_J=0$. Proposition \ref{xiJprop} shows that 
$\d\xi_J=\iota^*\overline{\rho}$. Since the Ricci form $\overline{\rho}=\lambda\overline{\omega}$ for some $\lambda\neq 0$, 
we see that $\iota^*\overline{\omega}=0$ and $\iota$ is Lagrangian. Lemma \ref{Lag.Jvol.lem} now shows that 
$H=0$. Conversely, if $\iota$ is minimal Lagrangian then it is a critical point by Lemma \ref{Lag.Jvol.lem}.
\end{proof}

Proposition \ref{min.Lag.lim.prop} shows that replacing the standard volume with the $J$-volume serves to filter out all other critical points, leaving only the minimal Lagrangians. We will see in Section \ref{s:calibration} that this result is in marked contrast to the Ricci--flat case, where we can have non-Lagrangian critical points for $\Vol_J$ (called STR submanifolds).

\begin{remark}
The stability of critical points of the $J$-volume turns out to be an interesting issue, studied in \cite{LotayPacini2}. The second variation formula shows that, when $M$ is K\"ahler with negative Ricci curvature, not only is it true that critical points are automatically stable, but also that the $J$-volume is strictly convex with respect to a certain notion of geodesics on the infinite-dimensional space of totally real submanifolds. An application of these results appears in \cite{LotayPacini3}.
 \end{remark}

\subsection{Comments on the negative gradient flow}\label{ss:comments_Jflow}
Given the above, it seems reasonable to study the \textit{$J$-mean curvature flow} (J-MCF) of totally real submanifolds, defined as the negative gradient flow of $\Vol_J$:
\begin{equation*}
 \frac{\partial\iota_t}{\partial t}=H_J[\iota_t]+S_J[\iota_t].
\end{equation*}
In certain situations one might expect this to coincide with standard MCF:  
for example, if both flows preserved the Lagrangian condition, Lemma \ref{Lag.Jvol.lem} would imply that they coincide on Lagrangians. However, in generic (\textit{i.e.}~non KE) almost K\"ahler manifolds there is no reason why the Lagrangian condition should be preserved by these flows. 

More importantly, in Section \ref{s:shorttime}, we will see that the operator $H_J$ is highly degenerate, making the existence theory for J-MCF rather challenging. It is an appealing idea to try to rely on the existence theory of the (less degenerate) MCF to obtain results for J-MCF: we give examples of this line of thought in $\S$\ref{s:shorttime}. Overall, however, it seems difficult to prove strong results for this flow.

In the next section we will define and study an alternative flow (the Maslov flow) in terms of the Maslov $1$-form which turns out to have better analytic and geometric properties. It follows from Theorem \ref{thm:flows_coincide} that in the K\"ahler setting  Maslov flow and $J$-MCF coincide, so any results concerning the Maslov flow will hold also for  $J$-MCF.  

In any case it is interesting to speculate on the properties of $J$-MCF. Although this flow seeks to find totally real submanifolds which minimize the $J$-volume, it may happen that the flow converges to a submanifold for which $\Vol_J$ is zero, \textit{i.e.}~to a partially complex submanifold. For example, if we start with a partially complex submanifold $L'$  and perturb it slightly to become a totally real $L$, then $\Vol_J(L)$ should be very small and hence it 
should flow back to $L'$ under $J$-MCF. In other words, one should expect that  $J$-MCF can leave the totally real submanifolds $\mathcal{T}$ and reach the ``boundary'' of $\mathcal{T}$. Given that this behaviour would interrupt the flow, we should incorporate it into any notion of ``singularity formation'' for $J$-MCF.

\section{The Maslov flow}\label{s:MF}

Assume we are given a unitary connection $\tnabla$ on an almost Hermitian manifold $(M,J,\bar{g})$. Consider the induced connection $\tnabla$ on $K_M$: this allows 
us to define the Maslov form $\xi_J[\iota]$ of any totally real immersion $\iota$. We can use the induced metric $g$ on $L$ to view $\xi_J[\iota]$ as a tangent 
vector field so that $J\iota_*\xi_J^{\sharp}[\iota]$ is a section of $J(TL)$. 
The \emph{Maslov flow} for a family of immersions $\iota_t:L\rightarrow M$ such that $\iota_0=\iota$ is given by:
\begin{equation}\label{MF.eq}
\frac{\partial}{\partial t}\iota_t=-J\iota_{t*}\xi_J^{\sharp}[\iota_t]=H_J[\iota_t]+T_J[\iota_t].
\end{equation}
Stationary points for the Maslov flow are immersions for which $\xi_J=0$ or equivalently $\tnabla\Omega_J=0$, \textit{i.e.}~$\Omega_J$ is a parallel section. In 
particular the induced connection is flat as $\d\xi_J=0$.  Observe that if $\iota_t$ satisfies Maslov flow then so does $\iota_t\circ\varphi$ for 
any $\varphi\in\Diff(L)$, so Maslov flow defines a flow on submanifolds in $\TRL$. 

The short-time existence of Maslov flow is highly non-trivial and will be discussed in Section \ref{s:shorttime}. In the course of this section we will thus simply assume that solutions exist and concentrate instead on their geometric properties. We will show that this flow is particularly interesting in the context of almost K\"ahler manifolds where its properties are strongly analogous to those valid for standard MCF, when we restrict the latter to Lagrangians in K\"ahler ambient spaces.

Let us start by reviewing an interesting ambient flow for almost K\"ahler manifolds, introduced by Streets and Tian.

\subsection{Introduction to symplectic curvature flow}\label{ss:SCF}

Streets--Tian \cite{StrTian} consider the following flow of the almost K\"ahler structure on $M$ (up to a factor of $\frac{1}{2}$):
\begin{equation}\label{SCF.eq}
\frac{\partial}{\partial t}\overline{\omega}=-\frac{1}{2}\widetilde{P},\quad\frac{\partial}{\partial t}J=-\frac{1}{2}\overline{\nabla}^*\overline{\nabla}J+\frac{1}{2}\overline{\mathcal{N}}+\frac{1}{2}\overline{\mathcal{R}}
\end{equation}
where, if $\{\bar{e}_1,\ldots,\bar{e}_{2n}\}$ is a local orthonormal frame on $M$,
$$\overline{g}(\overline{\mathcal{N}}(X),Y)=\overline{g}\big((\overline{\nabla}_{\overline{e}_k}J)JX,(\overline{\nabla}_{\overline{e}_k}J)Y\big);$$
$$\overline{g}(\overline{\mathcal{R}}(X),Y)=\overline{\Ric}(JX,Y)+\overline{\Ric}(X,JY).$$
Since $\widetilde{P}$ is closed and (up to a constant multiple) represents the first Chern class of $M$, this symplectic curvature flow (SCF) preserves the closedness of the 2-form $\overline{\omega}$
and is a natural extension of K\"ahler--Ricci flow: in the K\"ahler case we see immediately that the flow reduces to K\"ahler--Ricci flow. The flow of $J$ ensures that the compatibility condition between $\overline{g}$, $J$ and $\overline{\omega}$ is 
preserved.  In particular, we have that $\overline{g}(\overline{\nabla}^*\overline{\nabla}JX-\overline{\mathcal{N}}(X),Y)$ is, up to a constant factor, equal to the $(2,0)+(0,2)$ part of
$\widetilde{P}(X,Y)$.  Moreover, the induced flow on the metric $\overline{g}$ is Ricci flow plus some lower order terms which vanish in the K\"ahler setting.

The stationary solutions (and the expanding and shrinking solitons) of SCF are not fully understood: namely 
solutions to 
\begin{equation*}\label{SCF.soliton.eq}
\widetilde{P}=2\lambda\overline{\omega}\quad\text{and}\quad \overline{\nabla}^*\overline{\nabla}J-\overline{\mathcal{N}}-\overline{\mathcal{R}}=0
\end{equation*}
for some constant $\lambda$.  These can all be viewed as stationary points of what one might call normalized symplectic curvature flow, namely:
\begin{equation*}\label{NSCF.eq}
\frac{\partial}{\partial t}\overline{\omega}=-\frac{1}{2}\widetilde{P}+\lambda\overline{\omega},\quad\frac{\partial}{\partial t}J=-\frac{1}{2}\overline{\nabla}^*\overline{\nabla}J+\frac{1}{2}\overline{\mathcal{N}}+\frac{1}{2}\overline{\mathcal{R}}.
\end{equation*}
  K\"ahler--Einstein metrics clearly solve these equations, but non-trivial solutions are also possible: for example there exist compact non-K\"ahler solitons for SCF with constant $J$, cf.~\cite{Pook}. Analogous examples do not exist in dimension 4: any such ``static'' solution is necessarily K\"ahler--Einstein \cite[Corollary 9.5]{StrTian}. The metrics obtained in \cite{Pook} are not Einstein;  actually, there is a conjecture due to Goldberg stating that any compact almost K\"ahler manifold whose metric is Einstein is necessarily K\"ahler--Einstein.

\subsection{Maslov flow and symplectic curvature flow}

As we have seen, in the almost K\"ahler setting we have a canonical choice
 of complex 
Hermitian connection (the Chern connection), so the Maslov flow provides a canonical way to deform totally real submanifolds.

We now show that the Maslov flow is ``compatible'', in a precise way, with
 symplectic curvature flow. Notice that the definitions of the two flows are
  completely independent of each other, and each is based on its own specific 
  set of geometric considerations. This means  the compatibility was not ``built 
  into'' the definitions: it reveals something interesting about both flows.

\begin{thm}\label{MSCF.thm}
Let $\iota:L\rightarrow M$ be a totally real submanifold of an almost 
K\"ahler manifold $(M,J,\overline{\omega})$.  Suppose that $(J_t,\overline{\omega}_t)$ satisfies symplectic curvature flow as in \eq{SCF.eq}
with $\overline{\omega}_0=\overline{\omega}$ and 
$\iota_t:L\rightarrow (M,J_t,\overline{\omega}_t)$ satisfies Maslov flow as in \eq{MF.eq} with $\iota_0=\iota$.
Then
$$\frac{\partial}{\partial t}\iota_t^*\overline{\omega}_t=0$$
for all $t>0$ for which the flows exist;
i.e.~the 2-form $\omega_t=\iota_t^*\overline{\omega}_t$ is preserved along the coupled flow.  
\end{thm}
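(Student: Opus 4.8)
The plan is to mimic the proof of Theorem \ref{LMCF.KRF.thm}, replacing the mean curvature vector by the Maslov flow velocity and KRF by SCF, with the expectation that the two propositions of Section \ref{s:hermitian} will make the two contributions to $\frac{\partial}{\partial t}\omega_t$ cancel \emph{exactly}, rather than up to a heat-type remainder. Write $V_t=\frac{\partial}{\partial t}\iota_t$ for the velocity, so that the Maslov flow equation \eq{MF.eq} reads $V_t=H_J[\iota_t]+T_J[\iota_t]$. Since $J_t$ evolves continuously and the totally real condition is open, $\iota_t$ stays a totally real immersion for short time, so the Maslov form $\xi_J$, the curvature $\widetilde{P}_t$ of the time-$t$ Chern connection, and Propositions \ref{HJ-prop} and \ref{xiJprop} all apply at each fixed $t$.

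First I would differentiate $\omega_t=\iota_t^*\overline{\omega}_t$, picking up one term from the moving submanifold and one from the evolving ambient form:
\begin{equation*}
\frac{\partial}{\partial t}\omega_t=\iota_t^*\mathcal{L}_{V_t}\overline{\omega}_t+\iota_t^*\frac{\partial}{\partial t}\overline{\omega}_t.
\end{equation*}
For the first term, Cartan's formula gives $\mathcal{L}_{V_t}\overline{\omega}_t=\d(V_t\lrcorner\overline{\omega}_t)+V_t\lrcorner\d\overline{\omega}_t$, and the second summand vanishes because SCF preserves closedness of $\overline{\omega}_t$ (Section \ref{ss:SCF}). Commuting $\d$ with $\iota_t^*$, the first term becomes $\d\big(\iota_t^*(V_t\lrcorner\overline{\omega}_t)\big)$. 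By Proposition \ref{HJ-prop}, for any $X$ tangent to $L$ we have $\overline{\omega}_t(V_t,X)=\overline{\omega}_t(H_J+T_J,X)=\xi_J(X)$, so $\iota_t^*(V_t\lrcorner\overline{\omega}_t)=\xi_J[\iota_t]$; hence the first term equals $\d\xi_J$, which Proposition \ref{xiJprop} identifies with $\tfrac{1}{2}\iota_t^*\widetilde{P}_t$.

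For the second term, the SCF equation \eq{SCF.eq} gives $\frac{\partial}{\partial t}\overline{\omega}_t=-\tfrac{1}{2}\widetilde{P}_t$, so $\iota_t^*\frac{\partial}{\partial t}\overline{\omega}_t=-\tfrac{1}{2}\iota_t^*\widetilde{P}_t$. Adding the two contributions,
\begin{equation*}
\frac{\partial}{\partial t}\omega_t=\tfrac{1}{2}\iota_t^*\widetilde{P}_t-\tfrac{1}{2}\iota_t^*\widetilde{P}_t=0,
\end{equation*}
which is the claim. The mechanism is the one advertised in the introduction: the Maslov flow is engineered so that $\d(V_t\lrcorner\overline{\omega}_t)=\d\xi_J$ reproduces (half of) the first Chern form, exactly the quantity by which SCF moves $\overline{\omega}_t$, so the two cancel with no leftover terms---in contrast to the Lagrangian MCF case, where Proposition \ref{d.H.hook.omega.prop} leaves a $-\nabla^*\nabla\omega+C\lrcorner\omega$ remainder and forces a maximum-principle argument.

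I expect the delicate points to be bookkeeping rather than conceptual. Because both $J_t$ and $\overline{\omega}_t$ evolve, the Chern connection $\tnabla_t$, the canonical section $\Omega_L$, and hence $\xi_J$ and $\widetilde{P}_t$ are all time-dependent; one must check that Propositions \ref{HJ-prop} and \ref{xiJprop}, stated at a fixed almost Hermitian structure, apply pointwise in $t$ using the time-$t$ data---this is immediate, as they are purely tensorial identities at each instant. The remaining point requiring care is the short-time preservation of the totally real condition under the coupled system, guaranteeing that $\xi_J[\iota_t]$ is defined; this follows from openness of the totally real condition together with short-time existence for the coupled flow (assumed here, and treated in Section \ref{s:shorttime}). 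Unlike the coupled MCF--KRF proof, no curvature estimates, parabolic inequality, or maximum principle are needed, since the result is an exact pointwise cancellation.
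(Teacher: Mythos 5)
Your proposal is correct and follows essentially the same route as the paper: differentiate $\iota_t^*\overline{\omega}_t$, use Cartan's formula plus closedness of $\overline{\omega}_t$ along SCF, identify $\iota_t^*((H_J+T_J)\lrcorner\overline{\omega}_t)$ with $\xi_J$ via Proposition \ref{HJ-prop} and $\d\xi_J$ with $\tfrac{1}{2}\iota_t^*\widetilde{P}_t$ via Proposition \ref{xiJprop}, and cancel against the SCF term. The paper's proof is exactly this computation, stated more tersely; your additional remarks on time-dependence of the Chern connection and short-time preservation of the totally real condition are sensible bookkeeping that the paper leaves implicit.
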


\begin{proof}  
Recall the equation for the Maslov flow
$$\frac{\partial}{\partial t}\iota_t=H_{J}+T_J,$$
where $H_{J}$ and $T_J$ are computed using the Chern connection on $(M,J_t,\overline{\omega}_t)$ via \eq{HJ.eq} and \eq{TJ.eq}. Since $\overline{\omega}_t$ remains closed along SCF, we calculate 
\begin{align*}
\frac{\partial}{\partial t}\iota_t^*\overline{\omega}_t&=\iota_t^*\mathcal{L}_{(H_J+T_J)}\overline{\omega}_t+\iota_t^*\frac{\partial}{\partial t}\overline{\omega}_t\\
&=\iota_t^*\d((H_J+T_J)\lrcorner\overline{\omega}_t)-\frac{1}{2}\iota_t^*\widetilde{P}_t=0
\end{align*}
by Propositions \ref{xiJprop} and \ref{HJ-prop}.  
\end{proof}

\begin{remark} Our formulae for totally real submanifolds do not require the almost K\"ahler condition: in particular, $\d\xi_J$ is 
always $\frac{1}{2}\tilde{P}$.  The only place in the proof where we make specific use of 
the fact that $\overline{\omega}_t$ is closed is when we simplify $\mathcal{L}_{(H_J+T_J)}\overline{\omega}_t$ using Cartan's formula.  This step makes it 
non-obvious to see how similar results could hold in the more general setting of almost Hermitian manifolds,  
coupling Maslov flow with ambient flows available in the literature which include additional terms in the evolution of the almost Hermitian structure.
 \end{remark}

 In the special case of Lagrangian submanifolds, this allows us to generalize Theorem \ref{main.LMCF.thm} to almost K\"ahler manifolds, as follows.
 
\begin{cor}\label{LSCF.cor}
Let $\iota:L\rightarrow M$ be a Lagrangian submanifold of an almost K\"ahler manifold.  
If $M$ evolves by 
symplectic curvature flow and $L$ evolves 
by Maslov flow then $\iota_t:L\rightarrow M$ is Lagrangian with respect to $\overline{\omega}_t$ for all $t$.
\end{cor}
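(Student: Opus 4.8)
The plan is to obtain this as an immediate corollary of Theorem \ref{MSCF.thm}. The key point is that a Lagrangian submanifold is in particular totally real: since $\overline{\omega}(\cdot,\cdot)=\overline{g}(J\cdot,\cdot)$, the Lagrangian condition forces $J(T_pL)=(T_pL)^\perp$, so $J(T_pL)\cap T_pL=\{0\}$, and the hypotheses of Theorem \ref{MSCF.thm} apply verbatim to $\iota$.

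First I would recall that the Lagrangian condition is precisely the vanishing of $\omega_t:=\iota_t^*\overline{\omega}_t$. Theorem \ref{MSCF.thm} asserts that, under the coupled system of symplectic curvature flow \eq{SCF.eq} for $(J_t,\overline{\omega}_t)$ and Maslov flow \eq{MF.eq} for $\iota_t$, one has $\frac{\partial}{\partial t}\omega_t=0$ for all $t$ for which the flows exist. Hence $\omega_t$ is independent of $t$. Since $L$ is Lagrangian for the initial structure, $\omega_0=\iota_0^*\overline{\omega}_0=0$, and therefore $\omega_t\equiv 0$; that is, $\iota_t$ is Lagrangian with respect to $\overline{\omega}_t$ for every $t$ in the interval of existence.

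The only point requiring a word of care is the consistency of the setup, namely that the Maslov flow is defined only for totally real immersions, so one must know that $\iota_t$ remains totally real in order for \eq{MF.eq} to make sense along the flow. This is not an obstacle: the totally real condition is open in the Grassmannian, so it persists for short time, and in any case the conclusion $\omega_t\equiv 0$ shows that each $L_t$ is Lagrangian, hence a fortiori totally real. Thus the corollary is genuinely a specialization of Theorem \ref{MSCF.thm} to the closed subset of Lagrangians sitting inside the totally real submanifolds, and all of the real work is already contained in that theorem --- whose proof in turn combines the identity $\d\xi_J=\tfrac{1}{2}\widetilde{P}$ from Proposition \ref{xiJprop}, the key formula $\overline{\omega}(H_J+T_J,\cdot)=\xi_J$ from Proposition \ref{HJ-prop}, and the fact that symplectic curvature flow evolves $\overline{\omega}$ by exactly $-\tfrac{1}{2}\widetilde{P}$, so that the two contributions cancel via Cartan's formula.
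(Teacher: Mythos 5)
Your proposal is correct and matches the paper exactly: Corollary \ref{LSCF.cor} is stated as an immediate specialization of Theorem \ref{MSCF.thm}, deducing $\omega_t\equiv 0$ from $\frac{\partial}{\partial t}\omega_t=0$ and the initial Lagrangian condition $\omega_0=0$. Your additional remark that the totally real condition persists (being open, and in any case implied a posteriori by $\omega_t\equiv 0$) is a sensible consistency check consistent with the paper's setup.
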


\paragraph{Concluding remarks.}Let us pause to compare Theorem \ref{MSCF.thm} with Theorem \ref{main.LMCF.thm}. 
To simplify the comparison, let us start by assuming that $M$ is K\"ahler. In this
 case SCF coincides with KRF and the Maslov flow coincides with 
 $J$-MCF 
(which is MCF on Lagrangians), so the two theorems are formally analogous. Even here, Theorem \ref{MSCF.thm} is significantly stronger than the other, 
in two respects:
\begin{itemize}
 \item it does not assume that $L$ is compact;
 \item it proves that the 2-form $\omega_t$ is preserved \emph{pointwise}, regardless of its initial value; in other words, it applies to all totally real submanifolds, rather than only to Lagrangians.
\end{itemize}
Why is this true? Recall our concluding remarks at the end of Section \ref{ss:existence_lagr_mcf}. On Lagrangians the $J$-volume and the standard volume coincide; our new strategy is to choose the $J$-volume extension of this functional to $\mathcal{T}$, rather than the standard volume functional. This leads to the uniformly simple formulae 
\begin{equation}\label{simple.eq}
\xi_J=\iota^*\overline{\omega}(H_J,\cdot), \ \ \d\xi_J=\frac{1}{2}\iota^*\tilde{P},
\end{equation}
valid for both Lagrangians and totally real submanifolds. In turn these lead to an ODE on $\omega_t$ rather than a parabolic inequality such as (\ref{eq:parabolic_inequality}), so we can eliminate the maximum principle argument and the related compactness assumption.

Equations \eqref{simple.eq} rely also on the K\"ahler assumption, which cancels the quantity $T_J$ which should have appeared. 
The generalization to almost K\"ahler manifolds brings $T_J$ back, and thus requires replacing the gradient flow with the Maslov flow. This explains why the result, in its most general form, concerns the coupling of SCF with Maslov flow, rather than with  $J$-MCF.

On the other hand, even in the K\"ahler case, replacing $\Vol_g$ with $\Vol_J$ leads to a more degenerate operator governing the  flow. This makes the existence theory much more complicated: we will discuss this  at length in Section \ref{s:shorttime}.

To close, it may be useful to emphasize a basic difference between $J$-MCF (or MCF) and the Maslov flow. The former is generated by a functional which is invariant under reparametrization. This implies that the corresponding flow is invariant: reparametrization adds tangential motions to the flow, affecting its direction in $TM$ but not the image submanifolds. In other words, from the point of view of the functional, our definition of the $J$-mean curvature vector field is not particularly canonical (and indeed, it depends on the choice of an $L^2$-type metric).
Reparametrization, however, will usually change $\omega$ (except in the case of Lagrangians). Preserving $\omega$ can be thought of as analogous to gauge-fixing: it is a strong condition on the immersions, rather than on the image submanifolds. In this sense we can think of the Maslov flow as a gauge-fixed version of the gradient flow. To define this flow, the idea of replacing the more classical transverse space $TL^\perp$ with $J(TL)$ is vital. Given that the two flows differ at most by only lower-order terms (defined via the torsion), we start to notice the geometric importance of a careful choice of such terms.

\subsection{Flows in K\"ahler--Einstein manifolds}\label{ss:KE_flows}

Recall the difference between Theorem \ref{LMCF.KE.thm} and Theorem \ref{LMCF.KRF.thm}: the former manages to avoid the need for an ambient flow by assuming the KE condition. There is however an alternative way of thinking about this situation, as in Corollary \ref{cor:normalized}: if $M$ is KE and we couple MCF with the normalized KRF, the ambient remains static so we exactly recover Theorem \ref{LMCF.KE.thm}.

Let us apply this same game to Theorem \ref{MSCF.thm}, using the normalized version of SCF introduced in Section \ref{ss:SCF}. The same proof then leads to the following.

\begin{thm}\label{thm:MSCF_normalized}
 Let $\iota:L\rightarrow M$ be a totally real submanifold of an almost 
K\"ahler manifold $(M,J,\overline{\omega})$.  Suppose that $(J_t,\overline{\omega}_t)$ satisfies the normalized symplectic curvature flow with respect to the constant $\lambda$ and that 
$\iota_t:L\rightarrow (M,J_t,\overline{\omega}_t)$ satisfies Maslov flow.

Then, for all $t>0$ for which the flow exists,
$$\frac{\partial}{\partial t}\iota_t^*\overline{\omega}_t=\lambda\iota_t^*\overline{\omega}_t;$$
i.e.~the 2-form $\omega_t=\iota_t^*\overline{\omega}_t$ changes exponentially.
\end{thm}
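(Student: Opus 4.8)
The plan is to mimic the proof of Theorem \ref{MSCF.thm} essentially verbatim, the only new feature being that the normalization term $\lambda\overline{\omega}_t$ survives into the final answer. Writing $V_t=\frac{\partial}{\partial t}\iota_t=H_J+T_J$ for the velocity of the Maslov flow, I would first apply the standard formula for the time derivative of the pull-back of a time-dependent form,
$$\frac{\partial}{\partial t}\iota_t^*\overline{\omega}_t=\iota_t^*\mathcal{L}_{V_t}\overline{\omega}_t+\iota_t^*\frac{\partial}{\partial t}\overline{\omega}_t,$$
exactly as in \eq{evolve.eq.2b}. Here the Chern connection, and hence $H_J,T_J$ via \eq{HJ.eq} and \eq{TJ.eq}, are those of the structure $(J_t,\overline{\omega}_t)$ present at time $t$; the flow of $J$ enters only through this time-dependence and contributes no separate term, since $\frac{\partial}{\partial t}\overline{\omega}_t$ is prescribed directly by the first equation of the normalized flow.

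For the Lie derivative term I would use Cartan's formula together with the fact that $\overline{\omega}_t$ stays closed along normalized SCF (because $\widetilde{P}_t$ is closed, so $\frac{\partial}{\partial t}\d\overline{\omega}_t=\lambda\,\d\overline{\omega}_t$ preserves $\d\overline{\omega}_t=0$) to reduce it to an exact form: $\iota_t^*\mathcal{L}_{V_t}\overline{\omega}_t=\iota_t^*\d(V_t\lrcorner\overline{\omega}_t)=\d\,\iota_t^*(V_t\lrcorner\overline{\omega}_t)$. Proposition \ref{HJ-prop} identifies $\iota_t^*(V_t\lrcorner\overline{\omega}_t)=\xi_J$, and Proposition \ref{xiJprop} gives $\d\xi_J=\tfrac{1}{2}\iota_t^*\widetilde{P}_t$, so the first term equals $\tfrac{1}{2}\iota_t^*\widetilde{P}_t$. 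For the ambient term I would substitute the normalized SCF equation $\frac{\partial}{\partial t}\overline{\omega}_t=-\tfrac{1}{2}\widetilde{P}_t+\lambda\overline{\omega}_t$, obtaining $\iota_t^*\frac{\partial}{\partial t}\overline{\omega}_t=-\tfrac{1}{2}\iota_t^*\widetilde{P}_t+\lambda\iota_t^*\overline{\omega}_t$. Adding the two contributions, the copies of $\widetilde{P}_t$ cancel and we are left with $\frac{\partial}{\partial t}\iota_t^*\overline{\omega}_t=\lambda\iota_t^*\overline{\omega}_t=\lambda\omega_t$.

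There is no genuine obstacle here, and this is precisely the point: the key formulae of Propositions \ref{HJ-prop} and \ref{xiJprop}, which encapsulate the ``integrability condition'' $\d\xi_J=\tfrac{1}{2}\iota^*\widetilde{P}$, hold pointwise in $t$ for any totally real submanifold, so the cancellation of the two copies of $\widetilde{P}_t$ is automatic and the normalization term passes through the pull-back untouched. The only point worth checking is that these two propositions, stated for a fixed almost Hermitian structure, apply at each time slice to the evolving Chern connection of $(J_t,\overline{\omega}_t)$ — which they do, since they are purely pointwise-in-$t$ statements. Finally, the resulting linear ODE $\frac{\partial}{\partial t}\omega_t=\lambda\omega_t$ integrates to $\omega_t=e^{\lambda t}\omega_0$, which is the asserted exponential behaviour.
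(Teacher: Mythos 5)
Your proposal is correct and is exactly the argument the paper intends: Theorem \ref{thm:MSCF_normalized} is stated with the remark that ``the same proof'' as Theorem \ref{MSCF.thm} applies, namely Cartan's formula plus the identities of Propositions \ref{HJ-prop} and \ref{xiJprop} to cancel the two copies of $\widetilde{P}_t$, leaving only the normalization term $\lambda\iota_t^*\overline{\omega}_t$. Your additional checks (closedness of $\overline{\omega}_t$ along the normalized flow and the pointwise-in-$t$ validity of the key propositions) match the paper's reasoning.
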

 
Let us apply Theorem \ref{thm:MSCF_normalized} to the case in which $M$ is KE with Ricci form $\overline{\rho}=\lambda\overline{\omega}$. Here the KE condition exactly counteracts the ambient flow, leaving the structure on $M$ static. Thus, we can judge the effect of the Maslov flow (equivalently, of the $J$-mean curvature flow) on $\omega_t$, taken as a single equation rather than as part of a coupled system: we see that $\omega_t$ changes exponentially.

\begin{prop}\label{limit.prop}
Let $\iota:L\rightarrow M$ be a totally real submanifold in a K\"ahler--Einstein manifold $M$ and let $\iota_t$ denote the solution to the Maslov flow with $\iota_0=\iota$, for some maximal time interval $[0,T)$.
\begin{itemize}
 \item Assume  $\overline{\Ric}\geq 0$.  If $\iota:L\rightarrow M$ is 
not Lagrangian then it cannot become Lagrangian under Maslov flow. 
\item Assume $\overline{\Ric}<0$ and that $\iota_t(L)=L_t$ converges to a smooth submanifold $L_T$ of the same dimension as $t\rightarrow T$. Then $L_T\in \mathcal{T}$ if and only if $T=\infty$; 
in this case $L_T=L_{\infty}$ is a minimal Lagrangian submanifold. Conversely, $L_T$ is partially complex only if $T<\infty$.  
\end{itemize}

\end{prop}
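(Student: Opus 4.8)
The plan is to reduce the whole statement to the single evolution equation for $\omega_t=\iota_t^*\overline{\omega}$. Specializing the discussion preceding the proposition (Theorem \ref{thm:MSCF_normalized} in the case where normalized symplectic curvature flow leaves the KE structure static, so the background stays fixed), the Maslov flow on a fixed KE manifold satisfies
$$\frac{\partial}{\partial t}\omega_t=\lambda\omega_t,$$
hence $\omega_t=e^{\lambda t}\omega_0$ pointwise, where $\overline{\rho}=\lambda\overline{\omega}$ and $\overline{\Ric}=\lambda\overline{g}$. With this in hand the first bullet is immediate: if $\overline{\Ric}\geq 0$ then $\lambda\geq 0$, so $e^{\lambda t}\geq 1$ and $\omega_t(p)=e^{\lambda t}\omega_0(p)$ is nonzero wherever $\omega_0(p)$ is. Thus a non-Lagrangian initial immersion, for which $\omega_0\not\equiv 0$, stays non-Lagrangian for all $t$.

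For the second bullet, assume $\overline{\Ric}<0$, i.e.\ $\lambda<0$. I would first dispatch the implication $T=\infty\Rightarrow L_\infty\in\mathcal{T}$ together with the Lagrangian conclusion: since $\lambda<0$ we have $e^{\lambda t}\to 0$, and smooth convergence $L_t\to L_\infty$ lets us pass to the limit in the pullback, giving $\iota_\infty^*\overline{\omega}=\lim_t\omega_t=0$. Hence $L_\infty$ is Lagrangian, in particular totally real, so $L_\infty\in\mathcal{T}$. To upgrade ``Lagrangian'' to ``minimal Lagrangian'' I would invoke monotonicity of $\Vol_J$: by Theorem \ref{thm:flows_coincide} the K\"ahler Maslov flow is the negative gradient flow of $\Vol_J$, so Proposition \ref{first.var.prop} gives
$$\frac{d}{dt}\Vol_J(L_t)=-\int_L|\xi_{J,t}|^2\,\vol_{J,t}\leq 0.$$
As $\Vol_J\geq 0$ is bounded below, $\int_0^\infty\!\int_L|\xi_{J,t}|^2\,\vol_{J,t}\,dt<\infty$, so along some sequence $t_k\to\infty$ one has $\|\xi_{J,t_k}\|_{L^2}\to 0$; smooth convergence then forces $\xi_{J,\infty}=0$. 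By Theorem \ref{thm:flows_coincide} this says $L_\infty$ is a critical point of $\Vol_J$, and since $\overline{\Ric}\neq 0$, Proposition \ref{min.Lag.lim.prop} identifies it as a minimal Lagrangian.

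It remains to prove $L_T\in\mathcal{T}\Rightarrow T=\infty$, which I would argue by contrapositive, showing $T<\infty\Rightarrow L_T\notin\mathcal{T}$. If $T<\infty$ and the smooth limit $L_T$ were totally real, then $\iota_T$ would be a smooth totally real immersion, so the short-time existence and uniqueness theory for the Maslov flow (Section \ref{s:shorttime}) would let us restart at $\iota_T$ and extend the solution past $T$, contradicting maximality of $[0,T)$. Hence $L_T\notin\mathcal{T}$, which completes the equivalence in the first claim and, since a partially complex submanifold is in particular not totally real, also yields the final assertion that $L_T$ can be partially complex only when $T<\infty$.

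I expect the main obstacle to be precisely this last step. The computation $\omega_t=e^{\lambda t}\omega_0$ and the monotonicity of $\Vol_J$ are robust, but the implication $T<\infty\Rightarrow L_T\notin\mathcal{T}$ rests on short-time existence and uniqueness for the Maslov flow from arbitrary totally real initial data, the delicate and only partially resolved analytic input flagged elsewhere in the paper. A secondary subtlety is ensuring the smooth convergence hypothesis is strong enough (at least $C^2$) to pass to the limit in $\xi_J$, which involves one derivative of the tangent planes through $\tnabla\Omega_L$.
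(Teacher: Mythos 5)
Your proposal is correct and follows essentially the same route as the paper: both reduce everything to $\omega_t=e^{\lambda t}\omega_0$ from Theorem \ref{thm:MSCF_normalized}, treat the sign of $\lambda$ case by case, and handle the maximality claim by restarting the flow from a smooth totally real limit. The only difference is that where the paper simply asserts the limit is stationary (hence minimal), you supply a $\Vol_J$-monotonicity argument --- a reasonable elaboration, though note it implicitly assumes $L$ compact, which the proposition does not state.
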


\begin{proof}
Theorem \ref{thm:MSCF_normalized} shows that $\omega_t=e^{\lambda t}\omega_0$.
Assume $\lambda\geq 0$. If $\iota$ is not Lagrangian, there exist tangent vectors $X,Y$ on $L$ such that $\omega_0(X,Y)\neq 0$. Then 
$$|\omega_t(X,Y)|=e^{\lambda t}|\omega_0(X,Y)|\geq |\omega_0(X,Y)|$$ for all 
$t\geq 0$. This quantity is monotone increasing or constant, so in the limit it cannot vanish. 

Now assume $\lambda<0$. If $\iota_t$ converges within $\mathcal{T}$ then $T=\infty$ otherwise the flow could be continued as $L_T$ is smooth so the time interval would not be maximal. Now assume $T=\infty$ and $L_t$ converges to some $L_\infty$. Since $L_\infty$ is smooth of the same dimension, the induced metric $g_\infty$ is smooth. Then $|\omega_t|_{g_t}=e^{\lambda t}|\omega_0|_{g_t}$ and 
$$\lim_{t\rightarrow\infty}|\omega_0|_{g_t}=|\omega_0|_{g_{\infty}}$$ 
is finite.  Hence, as $\lambda<0$ we 
see that $|\omega_{\infty}|_{g_{\infty}}=0$ and thus $L_\infty$ is Lagrangian, so in particular $L_\infty\in\mathcal{T}$. It is clear that $L_\infty$ must also be stationary for the Maslov flow, thus minimal. Notice that this agrees with  Proposition \ref{min.Lag.lim.prop}.
\end{proof}

Loosely speaking, Proposition \ref{limit.prop} says that the set $\mathcal{L}$ of Lagrangian submanifolds is an unstable subset of $\mathcal{T}$ for the Maslov flow when $\overline{\Ric}\geq 0$; it is an attractor for the 
flow when $\overline{\Ric}<0$.    The latter statement makes clearer the strict stability of the critical points in negative KE manifolds.
  
Proposition \ref{limit.prop} also shows that if we are in a Ricci-positive KE manifold $M$, where the only critical points are minimal Lagrangian by 
Proposition \ref{min.Lag.lim.prop}, and we start with a non-Lagrangian totally real submanifold $L$ then the Maslov flow cannot converge.  
One possibility is that $L$ becomes partially complex so that the Maslov flow becomes undefined; i.e.~the flow reaches the ``boundary'' of the space $\mathcal{T}$ of
totally real submanifolds in $M$.
   
\subsection{Relation to other Lagrangian flows}\label{ss:otherflows}

To conclude this section we make some observations relating the Maslov flow to other generalisations of Lagrangian MCF which have appeared in the recent literature. We emphasize however that the papers in question consider only Lagrangians, while the Maslov flow applies to any totally real submanifold.

Let $L$ be  a Lagrangian in an almost K\"ahler manifold $M$. Here
we showed that $S_J=-T_J$ for any totally real, and hence by Lemma \ref{Lag.Jvol.lem}  on $L$ we have
$H_J+S_J=H_J-T_J=H$.   
Therefore, on $L$,
$$H_J+T_J=H+2T_J=H+2\overline{g}(J\widetilde{T}(e_j,e_i),e_i)Je_j=\hat{H},$$
where $\hat{H}$ is the ``generalized mean curvature vector'' 
on Lagrangians defined in \cite{SmoWang} (applied to the Chern connection).  We thus see that Maslov flow is a natural 
extension of the generalized Lagrangian mean curvature flow introduced in
\cite{SmoWang}.  
However, in \cite{SmoWang}, they also extend $\hat{H}$ to totally reals as a \emph{normal} vector field, so the Maslov flow is not the $\hat{H}$-flow except on Lagrangians.

Now let  $\widetilde{H}=\pi_{\perp}\tnabla_{e_i}e_i$. Since $H_{J}+S_J=H$ and $H_J=\widetilde{H}$  on $L$, we see that 
$$H=H_J+S_J=\widetilde{H}+\overline{g}(J\widetilde{T}(e_j,e_i),e_i)Je_j,$$
so that the torsion terms 
$$\overline{g}(J\widetilde{T}(e_j,e_i),e_i)Je_j=H-\widetilde{H}.$$
We can therefore deduce that $T_J=\widetilde{H}-H$ so that, on $L$, 
$H_{J}+T_J=2\widetilde{H}-H$.  
Since $M$ is almost K\"ahler, the Chern connection is related to the Levi-Civita connection by 
$$\tnabla_XY=\overline{\nabla}_XY +\frac{1}{2}(\overline{\nabla}_XJ)(JY)=\frac{1}{2}\overline{\nabla}_XY-\frac{1}{2}J\overline{\nabla}_X(JY),$$
and so we see that, on $L$, 
$$H_{J}+T_J=2\widetilde{H}-H=-\pi_{\perp}J\overline{\nabla}_{e_i}(Je_i).$$
When $M$ also has $c_1=0$, this is referred to as the ``complex mean curvature vector'' in \cite{Bar}.  So we see that
 the Maslov flow also generalizes this work.

 \section{Short-time existence}\label{s:shorttime}

The existence of short-time solutions to parabolic equations is a standard fact. Geometrically defined flows, however, are usually not parabolic: this is related to the fact that the underlying operators are invariant with respect to some large group of transformations, generally known as the ``gauge group''.

Consider the case of MCF for immersions $\iota:L\rightarrow (M,\overline{g})$. The volume functional is invariant under reparametrizations, \textit{i.e.}~under the action of the group $\mbox{Diff}(L)$. This is reflected in the fact that, given $\phi\in\mbox{Diff}(L)$, the mean curvature vector has the property $H[\iota\circ\phi]=H[\iota]\circ\phi$. The non-ellipticity of $H[\iota]$, viewed as a second order operator on $\iota$, is another manifestation of this fact. Its symbol with respect to the generic non-zero 1-form $\zeta\in T_x^*L$,
\begin{equation*}
\sigma(H[\iota])_x(\zeta):T_{\iota(x)}M\rightarrow T_{\iota(x)}M,
\end{equation*}
is only a semi-positive, rather than positive, endomorphism: its kernel is $n$-dimensional, given by the subspace $\iota_*(T_xL)$. It is thus not immediately clear that MCF admits short-time solutions. This issue is generally resolved in two ways: either restricting to normal variations, \textit{i.e.}~working transversely to the gauge group, or via a standard argument known as ``DeTurck's trick'', following \cite{DeTurck} (though it was known before in other contexts).

In general, DeTurck's trick consists of (i) modifying the operator to make it elliptic, (ii) solving the corresponding parabolic equation using standard theory, and (iii) showing that this solution can be modified to obtain a solution to the original equation. Clearly, this final modification must be built \textit{ad hoc} for the specific flow, in a manner determined by the gauge group.

Uniqueness of the solution of such ``weakly parabolic'' equations is also an issue; an appropriate argument must be found for each case.

$J$-MCF has the same invariance property as MCF so one should expect it to have a degenerate symbol, as above. The Maslov flow differs from $J$-MCF only by torsion terms, which are first order. This implies they have the same symbol, determined by the operator $H_J$. They thus share the same degeneracies.

In computing this symbol, however, we will see that the kernel is much larger than expected purely from invariance under reparametrization. In particular, restricting to transverse variations will not suffice to obtain a parabolic equation. 

In special settings these equations do, nonetheless, admit solutions. For example, assume the ambient manifold is KE. According to Theorem \ref{LMCF.KE.thm} MCF, applied to Lagrangian initial data, produces a curve of Lagrangian submanifolds. Lemma \ref{Lag.Jvol.lem} shows this family automatically solves J-MCF, which thus admits a solution even though it is much more degenerate than MCF. We now apply Theorem \ref{thm:flows_coincide} to conclude that the Maslov flow is also solvable. Analogously, we could use Theorem \ref{LMCF.KRF.thm} to prove that the coupled systems J-MCF+KRF and MF+KRF admit solutions for Lagrangian initial data in any K\"ahler manifold. Notice however that this method does not prove that the Maslov flow or J-MCF admit solutions when considered as self-standing equations.  

In some sense we can view this method as an analogue of DeTurck's trick: we perturb the degenerate operator $H_J$ to the less degenerate $H$, obtain existence, then argue that the solutions coincide in certain situations: cf.~the second Remark following Corollary \ref{Lag.MF.exist.cor} for a similar (though slightly more involved) argument. However, this method would certainly not extend to general totally real submanifolds, nor to more general ambient spaces in which MCF does not preserve Lagrangians. It is thus necessary to find alternative ways to deal with the extra degeneracies of $H_J$.
 
For the above reasons we now present another method for proving short-time existence, based on the Nash--Moser implicit function theorem as formalized by Hamilton \cite{Hamilton}. This method completely bypasses the properties of MCF, allowing us to extend the above existence results to a wider category of ambient spaces: SCF solitons. 
The key ingredient will be the following, which we may immediately deduce from Propositions \ref{xiJprop}-\ref{HJ-prop}:
\begin{equation}\label{eq:integrability}
 \d(\iota^*\overline{\omega}(H_J[\iota]+T_J[\iota],\cdot))=\frac{1}{2}\iota^*\widetilde{P}.
\end{equation}
Notice that the third order operator in $\iota$ on the left-hand side equals a first order one on the right, so \eqref{eq:integrability} is clearly a strong condition.
    We already used \eqref{eq:integrability} in Section \ref{s:MF} for geometric purposes, to show that any initial tensor $\omega:=\iota^*\bar{\omega}$ is preserved during the Maslov flow coupled with symplectic curvature flow. Here instead we will show that it has analytic consequences: specifically, equation \eqref{eq:integrability} enables us to deal with the extra degeneracies in $H_J+T_J$ by providing the essential ``integrability condition'' required to implement the results in \cite{Hamilton} to prove short-time existence. 

 This approach has several consequences.
\begin{itemize}
\item Our method strongly relies on the properties of the full operator, not only on its highest order terms. It thus applies only to the Maslov flow, confirming that the existence of solutions to the $J$-mean curvature flow seems more difficult, cf.~Section \ref{ss:comments_Jflow}. 
\item It indicates that the coupling with SCF might be an important ingredient in the general existence theory for the Maslov flow. Equivalently, it shows the relevance of the preserved quantity $\omega$ for the existence theory.
\item It indicates a new role for (\ref{eq:integrability}), thus also for its classical counterpart (\ref{eq:d.H.hook.omega}).
\end{itemize}
Let us start by computing the symbol of the operator $\iota\mapsto H_J[\iota]$, viewed as a second-order operator on the space of totally real immersions.

\begin{prop}\label{p:symbol}
Given $x\in L$, $\zeta\in T^*_xL$ and a totally real immersion $\iota$, the symbol of the operator $H_J$ is
\begin{equation*}
\sigma(H_J[\iota])_x(\zeta):T_{\iota(x)}M\rightarrow T_{\iota(x)}M, \ \ Z\mapsto\overline{g}(J\iota_*\zeta^\#,\pi_JZ) J\iota_*\zeta^\#.
\end{equation*}
The kernel of this map is $\iota_*(T_xL)\oplus \left(\langle J\iota_*\zeta^\#\rangle^\perp\cap J\iota_*(T_xL)\right)$, and thus has dimension $2n-1$. Further, $Z:=J\iota_*\zeta^\#$ is an eigenvector, with eigenvalue $|\zeta|^2$.
\end{prop}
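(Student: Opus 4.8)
The plan is to compute the principal symbol of the linearization of $\iota\mapsto H_J[\iota]$, since the symbol of a (nonlinear) second-order operator is the principal symbol of its linearization. By Proposition \ref{HJ-prop} we have $-J\iota_*\xi_J^\#=H_J+T_J$, and $T_J$ from \eq{TJ.eq} is built from the fixed torsion tensor $\widetilde{T}$ of $\tnabla$ contracted with the frame and $\pi_L$, so it depends on $\iota$ only through $\iota$ and its first derivatives; as a differential operator in $\iota$ it has order $\leq 1$ and so does not affect the second-order symbol. Hence $\sigma(H_J)=\sigma(-J\iota_*\xi_J^\#)$, and since $J$, $\iota_*$ and the musical isomorphism $\#$ (taken with respect to $g$) are of order $\leq 1$ in $\iota$, it suffices to find the symbol of the second-order operator $\iota\mapsto\xi_J[\iota]\in\Omega^1(L)$ and then set $\sigma(H_J)(\zeta)(Z)=-J\iota_*\big(\sigma(\xi_J)(\zeta)(Z)\big)^\#$. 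I fix once and for all the convention that a second covariant derivative $\tnabla_a\tnabla_b V$ of a variation field $V$ is replaced by $\zeta_a\zeta_b V$; this is the convention under which the symbol of MCF is the semi-positive endomorphism recalled at the start of the section (namely $Z\mapsto|\zeta|^2\pi_{\perp}Z$), and it is what yields the eigenvalue $+|\zeta|^2$ below.

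To compute $\sigma(\xi_J)$ I would fix a coordinate frame $\{\partial_i\}$ on $L$, put $e_i:=\iota_*\partial_i$ and $g_{ij}:=\overline{g}(e_i,e_j)$, and write the defining trace invariantly as $$\xi_J(X)=\tr_L(J\pi_J\tnabla_X)=g^{ij}\,\overline{g}\big(J\pi_J\tnabla_X e_i,e_j\big),$$ which is legitimate because $J\pi_J\tnabla_X e_i\in T_pL$. Linearizing along a variation $\iota_s$ with variation field $V$, $V(x)=Z$, I collect only the terms carrying two derivatives of $V$. The factors $g^{ij}$, $\pi_J$, the pulled-back connection coefficients and the pairing vector $e_j$ depend on $\iota$ through at most its first derivatives, so their variations are of order $\leq 1$ in $V$ and, paired against the remaining factors, feed only into lower-order symbols. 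The unique second-order contribution comes from $\tnabla_X$ falling on the variation $\delta e_i=\tnabla_{\partial_i}V$ of the differentiated frame (the torsion correction $\widetilde{T}(V,e_i)$ to $\delta e_i$ is algebraic in $V$, hence again lower order), producing $\tnabla_X\tnabla_{\partial_i}V$. Freezing the other factors and applying the symbol rule $\tnabla_X\tnabla_{\partial_i}V\mapsto\zeta(X)\zeta_i Z$ gives $$\sigma(\xi_J)(\zeta)(Z)(X)=g^{ij}\,\overline{g}(J\pi_J Z,e_j)\,\zeta(X)\,\zeta_i=\zeta(X)\,\overline{g}\big(J\pi_J Z,\iota_*\zeta^\#\big),$$ using $g^{ij}\zeta_i e_j=\iota_*\zeta^\#$. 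Since $J$ is $\overline{g}$-skew, $\overline{g}(J\pi_J Z,\iota_*\zeta^\#)=-\overline{g}(J\iota_*\zeta^\#,\pi_J Z)$, so $\sigma(\xi_J)(\zeta)(Z)=-\overline{g}(J\iota_*\zeta^\#,\pi_J Z)\,\zeta$. Raising the index with $g$ and applying $-J\iota_*$ gives exactly $$\sigma(H_J)(\zeta)(Z)=\overline{g}\big(J\iota_*\zeta^\#,\pi_J Z\big)\,J\iota_*\zeta^\#.$$

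The remaining claims are linear algebra in the splitting $T_{\iota(x)}M=\iota_*(T_xL)\oplus J\iota_*(T_xL)$. For $\zeta\neq 0$ we have $J\iota_*\zeta^\#\neq 0$, so the symbol kills $Z$ exactly when $\overline{g}(J\iota_*\zeta^\#,\pi_J Z)=0$. Writing $Z=\pi_L Z+\pi_J Z$, the component $\pi_L Z\in\iota_*(T_xL)$ is unconstrained while $\pi_J Z$ must lie in $\langle J\iota_*\zeta^\#\rangle^\perp\cap J\iota_*(T_xL)$; thus $\ker=\iota_*(T_xL)\oplus\big(\langle J\iota_*\zeta^\#\rangle^\perp\cap J\iota_*(T_xL)\big)$. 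As $J\iota_*\zeta^\#$ is a nonzero element of the $n$-dimensional space $J\iota_*(T_xL)$, its orthogonal complement cuts that space in dimension $n-1$, so $\dim\ker=n+(n-1)=2n-1$. Finally, for $Z=J\iota_*\zeta^\#\in J\iota_*(T_xL)$ we have $\pi_J Z=Z$, hence $$\sigma(H_J)(\zeta)(J\iota_*\zeta^\#)=|J\iota_*\zeta^\#|^2\,J\iota_*\zeta^\#=|\zeta|^2\,J\iota_*\zeta^\#,$$ using that $J$ is an isometry and that $g=\iota^*\overline{g}$ gives $|\iota_*\zeta^\#|_{\overline{g}}=|\zeta|_g$.

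The crux is the second paragraph: showing that a single contribution is genuinely second order in $V$. The pitfall is that the induced metric changes along the variation (so any $\overline{g}$-orthonormal frame would drift), which can covertly inject further top-order terms; organizing the defining trace through the coordinate frame $e_i=\iota_*\partial_i$ and the inverse induced metric $g^{ij}$ is exactly what makes the variations of $g^{ij}$, $\pi_J$ and $e_j$ visibly first order, leaving one surviving term. The secondary subtlety is to fix the symbol/sign convention at the outset, so that the eigenvalue emerges as $+|\zeta|^2$ and the (weak) parabolicity analyzed in the rest of the section is correctly oriented; everything else is the bookkeeping of which factors to freeze.
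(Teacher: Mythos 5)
Your proof is correct and takes essentially the same route as the paper's: both write the relevant trace in a coordinate frame $e_i=\iota_*\partial_i$ with the inverse induced metric $g^{ij}$, observe that $g^{ij}$, $\pi_J$ and the frame are first order in $\iota$ so that the unique top-order contribution comes from the second derivative of the variation field, and then read off the kernel and eigenvector by linear algebra in the splitting $T_{\iota(x)}M=\iota_*(T_xL)\oplus J\iota_*(T_xL)$. Your detour through $\xi_J$ and Proposition \ref{HJ-prop} (discarding $T_J$ as first order) is only a mild repackaging, since the paper's own manipulation of the definition \eq{HJ.eq} arrives at the same expression $\overline{g}(J\partial_j,\pi_J\tnabla_{\partial_j}\partial_k)J\partial_k$ before linearizing.
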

\begin{proof}
 Choosing local coordinates on $L$, let us identify $\partial_i$ with $\iota_*(\partial_i)$ and write 
 \begin{equation*}
  JH_J[\iota]=\mbox{tr}_g(\pi_T J\tnabla\pi^t_L)=g^{ij}\pi_TJ\tnabla_{\partial_i}\pi^t_L(\partial_j)=a^k \partial_k.
 \end{equation*}
We can compute the coefficents $a_k$ explicitly by noticing, in general, that if $v=a^k\partial_k$ then $g(v,\partial_l)=a^k g_{kl}$ thus $a^k=g^{kl}g(v,\partial_l)$. It follows that
\begin{align*}
 H_J[\iota]&=-g^{kl}g^{ij}g(\pi_T J\tnabla_{\partial_i}\pi^t_L (\partial_j), \partial_l) J\partial_k\\
 &=-g^{kl}g^{ij}\overline{g}(J\tnabla_{\partial_i}\pi^t_L (\partial_j), \partial_l) J\partial_k\\
 &=-g^{kl}g^{ij}\overline{g}(\partial_j, \pi_L\tnabla_{\partial_i}J\partial_l)J\partial_k\\
 &=g^{kl}g^{ij}\overline{g}(J\partial_j, \pi_J\tnabla_{\partial_i}\partial_l)J\partial_k.
\end{align*}
If we choose $\partial_i$ to be orthonormal in $x$, this expression simplifies to
\begin{equation*}
 H_J[\iota]=\overline{g}(J\partial_j,\pi_J\tnabla_{\partial_j}\partial_k)J\partial_k.
\end{equation*}
We now need to linearize this operator. We can identify any variation of $\iota$ as a vector field $Z$ and, in terms of our local coordinates, we can write $\tnabla$ as the standard differential plus lower order terms. Notice that $\pi_J$ is also a first order operator on $\iota$, so $H_J$ is quasi-linear; in particular, $\pi_J$ does not contribute to the second order terms of the linearization. Thus, up to lower order terms, the linearized operator is
\begin{equation*}
Z\mapsto\overline{g}(J\partial_j,\pi_J\frac{\partial^2 Z}{\partial_j\partial_k})J\partial_k.
\end{equation*}
This shows that the symbol with respect to $\zeta=\zeta_idx^i$ is
\begin{equation*}
 \sigma(H_J[\iota])_x(\zeta):Z\mapsto\overline{g}(J\partial_j,\pi_J Z)\zeta_j\zeta_k J\partial_k,
\end{equation*}
proving the claim.
\end{proof}

Proposition \ref{p:symbol} shows that $H_J$ is a particularly degenerate operator. However, Propositions \ref{xiJprop}--\ref{HJ-prop} show that $H_J+T_J$ satisfies the differential identity \eqref{eq:integrability}.
To understand this identity better, let us define the first order operator in two variables 
\begin{equation}\label{eq:Liota}
(\iota,Z)\mapsto L_\iota(Z):=\d(\iota^*\overline{\omega}(Z,\cdot)),
\end{equation}
which is linear in $Z$. For fixed $\iota$, the operator $Z\mapsto L_\iota(Z)$ has symbol
\begin{equation*}
 \sigma(Z\mapsto L_\iota(Z))_{|x}(\zeta):T_{\iota(x)}M\rightarrow \Lambda^2(T_x^*L),\ \ W\mapsto \zeta\wedge\iota^*\overline{\omega}(W,\cdot),
\end{equation*}
so its kernel is the ($n+1$)-dimensional space $J(\iota_*(T_xL))^\perp\oplus\langle J\iota_*\zeta^\#\rangle$.

Rewriting (\ref{eq:integrability}) as $2L_\iota(H_J[\iota]+T_J[\iota])=\iota^*\widetilde{P}$, we see that the composition of symbols on the left-hand side must vanish, and thus $$\sigma(Z\mapsto L_\iota(Z))\circ \sigma(\iota\mapsto H_J[\iota])=0.$$ We can interpret this as a constraint on the dimension of the image of the symbol of $H_J$. 
The key observation, however, is that $\langle J\iota_*\zeta^\#\rangle$ is precisely the positive eigenspace of the symbol of $H_J$.

We thus find ourselves in a situation very closely related to one introduced in \cite[$\S$5]{Hamilton}: equation (\ref{eq:integrability}) corresponds to Hamilton's ``integrability condition''. In order to apply that theory, however, it is necessary to ``linearize'' the setting of the problem, rephrasing the space of immersions into $M$ as a space of sections of a fixed vector bundle. We can then view the non-linear operator $H_J+T_J$ as a non-linear operator acting on these sections. We now review a standard way to achieve this. The bottom line will be that the two formulations are equivalent; indeed, we will ultimately work in terms of the original formulation so as to avoid the proliferation of pull-back operations.

\paragraph{The problem, reformulated.} Let $\overline{\exp}$ be the exponential map on $(M,\bar{g})$. Given a totally real immersion $\iota_0$, consider the diffeomorphism
\begin{equation*}
F:\mathcal{U}\subseteq TL\rightarrow \mathcal{V}\subseteq M,\ \ F(x,X):=\overline{\exp}_{|\iota_0(x)}(J\iota_{0*}(X))                                                                    
\end{equation*}
where $\mathcal{U}$ is an open neighbourhood of the zero section and $\mathcal{V}$ is an open neighbourhood of $\iota_0(L)$. Let $G:=F^{-1}$ be the inverse diffeomorphism. The properties of $\overline{\exp}$ imply that 
\begin{equation*}
 \frac{\partial F}{\partial X}_{|(x,0)}(\cdot)=J\iota_{0*}(\cdot):T_xL\rightarrow J\iota_{0*}(T_xL).
\end{equation*}
This implies that
\begin{equation*}
 G_{*|\iota_0}(H_J[\iota_0])=-\iota_{0*}^{-1}(JH_J[\iota_0]).
\end{equation*}
Set $X:=G(\iota)$, so that $\iota=F(X)$. Then
\begin{align*}
 \frac{\partial X}{\partial t}&=G_{*|\iota}\left(\frac{\partial\iota}{\partial t}\right)=G_{*|\iota}(H_J[\iota])=\widetilde{H}_J[X]\\
 X_{|t=0}&=G(\iota_0)=0,
\end{align*}
where we set $\widetilde{H}_J[X]:=G_{*|F(X)}(H_J[F(X)])$.

We now want to study the symbol of the operator $X\mapsto \widetilde{H}_J[X]$. Notice that $F$, respectively $G$, is defined pointwise, so it is of order zero in $X$, respectively $\iota$. This implies that 
\begin{equation*}
 \sigma(G_{*|\iota}(H_J[\iota]))=G_{*|\iota}\sigma(H_J[\iota]).
\end{equation*}
Since $G_{*}$ is an isomorphism,  the kernel of the left-hand side has dimension $2n-1$. In theory, $G_{*|\iota}$ might affect positivity in the remaining direction. However, when $\iota=\iota_0$ we can compute the symbol explicitly:
\begin{align*}
 \sigma(G_{*|\iota_0}(H_J[\iota_0]))_{|x}(\zeta)&=\sigma(-\iota_{0*}^{-1}(JH_J[\iota_0]))_{|x}(\zeta): T_xL\oplus T_xL\rightarrow T_xL\oplus T_xL,\\
 (Y_1,Y_2)&\mapsto \big(0,\overline{g}(J\iota_{0*}\zeta^\#, J\iota_{0*}Y_2)\zeta^\#\big)=g(\zeta^\#, Y_2)(0,\zeta^\#).
\end{align*}
This endomorphism has positive eigenvector with $(Y_1,Y_2)=(0,\zeta^\#)$, so it has positive trace. This is an open condition, so the same holds for $\sigma(G_{*|\iota}(H_J[\iota]))$, for any $\iota$ sufficiently $C^2$-close to $\iota_0$. Replacing $\iota$ with $F(X)$ gives the following.

\begin{lem}\label{l:symbol_tildeH}
 For any $X$ sufficiently $C^2$-small, the symbol of $X\mapsto \widetilde{H}_J[X]$ has a $(2n-1)$-dimensional kernel and one positive eigenvector.
\end{lem}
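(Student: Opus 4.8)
The plan is to exploit the factorization of the symbol through the zeroth-order isomorphism $G_*$, reducing the kernel computation to Proposition \ref{p:symbol}, and then to upgrade the positivity verified at $X=0$ to a full neighbourhood by an openness argument.

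First I would recall that, since $F$ (and hence $G=F^{-1}$) is defined pointwise via $\overline{\exp}$, it is an operator of order zero, so the symbol commutes with the pushforward: $\sigma(\widetilde{H}_J[X]) = G_{*|\iota}\,\sigma(H_J[\iota])$ with $\iota=F(X)$, exactly as noted above. By Proposition \ref{p:symbol} the endomorphism $\sigma(H_J[\iota])_x(\zeta)$ has a $(2n-1)$-dimensional kernel and is therefore of rank one. Composing with the linear isomorphism $G_{*|\iota}$ of the fibre preserves the rank, so $\sigma(\widetilde{H}_J[X])_x(\zeta)$ is again of rank one and its kernel is $(2n-1)$-dimensional. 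This settles the first assertion for every $X$ in the domain of $F$.

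Next I would pin down the single nonzero eigenvalue. Any rank-one endomorphism $A$ of the fibre has at most one nonzero eigenvalue, necessarily equal to $\tr A$ and carried by a one-dimensional eigenspace, while $A$ is nilpotent precisely when $\tr A=0$. Hence $A$ possesses a positive eigenvector if and only if $\tr A>0$, and the lemma reduces to showing $\tr\,\sigma(\widetilde{H}_J[X])_x(\zeta)>0$ for all nonzero $\zeta$ and all sufficiently $C^2$-small $X$. At $X=0$ the explicit computation carried out above yields $\sigma(\widetilde{H}_J[0])_x(\zeta):(Y_1,Y_2)\mapsto(0,g(\zeta^\#,Y_2)\zeta^\#)$, whose trace is $g(\zeta^\#,\zeta^\#)=|\zeta|^2>0$, with positive eigenvector $(0,\zeta^\#)$.

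Finally I would propagate this by continuity. As $H_J$ is a second-order operator depending smoothly on $\iota$ and $F$ is a fixed smooth diffeomorphism, the quantity $\tr\,\sigma(\widetilde{H}_J[X])_x(\zeta)$ depends continuously on $X$ in the $C^2$ topology; positivity being an open condition, it persists for $X$ in a $C^2$-neighbourhood of $0$, giving the claimed positive eigenvector. The hard part is really the second step: the isomorphism $G_{*|\iota}$ preserves rank but can in principle destroy positivity of the nonzero eigenvalue, so positivity cannot simply be read off from Proposition \ref{p:symbol} at a general $\iota$; it must be checked explicitly at $X=0$ and then transported by openness. The only remaining care needed is to make the $C^2$-continuity of the trace precise, which follows at once from the smooth dependence of all the ingredients on $\iota$.
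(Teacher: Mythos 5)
Your proposal is correct and follows essentially the same route as the paper: factor the symbol through the zeroth-order isomorphism $G_*$ to get the $(2n-1)$-dimensional kernel from Proposition \ref{p:symbol}, compute the symbol explicitly at $X=0$ to find the positive eigenvector $(0,\zeta^\#)$, and propagate positivity of the trace by openness in the $C^2$ topology. Your explicit remark that a rank-one endomorphism has a positive eigenvector if and only if its trace is positive is a useful clarification of why positivity of the trace is the right open condition, but it is the same argument the paper uses implicitly.
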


Up to here, $X$ could be any immersion $L\rightarrow \mathcal{U}$, but we now restrict to $X\in\Lambda^0(\mathcal{U})$, \textit{i.e.}~to sections of $TL$. Our main motivation for doing this is to apply \cite[Theorem 5.1]{Hamilton} to $H_J+T_J$; notice that it also corresponds to the idea of reducing degeneracies through gauge-fixing. In order for the flow to preserve sections, however, it will be necessary to project the operator onto the distribution in $T\mathcal{U}$ determined by the fibres of the vector bundle $TL$. 

We can also describe this space of sections in our original setting, given by maps into $M$. The map $F_*$ sends the distribution in $T\mathcal{U}$ to an integrable distribution $\mathcal{D}$, contained in $TM$. Sections of the vector bundle then correspond to the space of totally real  immersions
$$\mathcal{S}[\iota_0]:=\{\mbox{$\iota:L\rightarrow M$ such that $\iota_0(x)$, $\iota(x)$ belong to the same leaf of $\mathcal{D}$}\}.$$

In Hamilton's framework the next step would be to pull the integrability condition \eqref{eq:integrability} 
back to $\mathcal{U}$ and show it is satisfied by the projected operator. Given 
the above, however, it is  clear that we can equivalently continue to work in 
$\mathcal{V}$, applying Hamilton's result to our restricted space of 
immersions $\mathcal{S}[\iota_0]$. This will simplify some of the notation. 

\paragraph{Existence of Lagrangian solutions.}
The first problem is to define the projection of $H_J+T_J$ so as to preserve the integrability condition \eqref{eq:integrability}. We make use of the fact that $Z\mapsto L_{\iota}Z$ given in \eqref{eq:Liota} has a large kernel (reflected in its symbol), given by sections of $J(\iota_*(TL)^\perp)=(J\iota_*(TL))^{\perp}$. When $\iota=\iota_0$ this space is orthogonal to the distribution $\mathcal{D}$, so for immersions $C^1$-close to $\iota_0$ it is transverse to the distribution. Given any such $\iota$, we thus obtain a splitting 
$$T_{\iota(x)}M=\mathcal{D}_{\iota(x)}\oplus J(\iota_*(T_xL)^\perp).$$
Let $\pi$ denote the corresponding projection onto the second factor. Notice that the splitting, thus the projection, depends on first-order information in $\iota$. 

Set $K_J[\iota]:=\pi(H_J[\iota]+T_J[\iota])$. Then $H_J+T_J-K_J$ belongs to the distribution and continues to satisfy the integrability condition \eqref{eq:integrability}.  

We now show that this modification of the Maslov flow is well-posed.

\begin{thm}\label{thm:existence_projectedeq}
Let $(M,J,\bar\omega)$ be an almost K\"ahler manifold. Let $\iota_0:L\to M$ be a 
totally real immersion.
Then 
 $$\frac{\partial \iota_t}{\partial t}=H_J[\iota_t]+T_J[\iota_t]-K_J[\iota_t],\ \ \iota_t|_{t=0}=\iota_0$$
admits a unique short-time solution in $\mathcal{S}[\iota_0]$. 
 \end{thm}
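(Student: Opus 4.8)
The plan is to recognise the modified flow as an instance of Hamilton's short-time existence theorem for operators obeying an integrability condition \cite[Theorem 5.1]{Hamilton}, applied to the space $\mathcal{S}[\iota_0]$, which the reformulation above identifies with sections of $TL$. Recall that the role of $K_J[\iota]=\pi(H_J[\iota]+T_J[\iota])$ is exactly to make the velocity tangent to $\mathcal{S}[\iota_0]$: since $H_J+T_J-K_J$ lies in the distribution $\mathcal{D}$ by construction, the flow preserves leaves and hence defines a genuine evolution of sections. Granting this, two things must be checked: (i) that the modified operator still satisfies the integrability condition \eqref{eq:integrability}, and (ii) that the principal symbols of the flow operator and of the first-order operator $L_\iota$ from \eqref{eq:Liota} interlock as Hamilton's theorem demands.

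For (i), I would first note that $L_\iota$ annihilates $J(NL)$, where $NL=\iota_*(TL)^\perp$. Indeed, for $W\in NL$ and $V$ tangent to $L$ one has $\overline{\omega}(JW,\iota_*V)=-\overline{g}(W,\iota_*V)=0$, so $\iota^*\overline{\omega}(JW,\cdot)=0$ and thus $L_\iota(JW)=0$. Since $K_J$ takes values in $J(NL)=(J\iota_*(TL))^\perp$, we get $L_\iota(K_J)=0$ and therefore
$$L_\iota(H_J+T_J-K_J)=L_\iota(H_J+T_J)=\tfrac{1}{2}\iota^*\widetilde{P}$$
by Propositions \ref{xiJprop} and \ref{HJ-prop}. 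The right-hand side is first order in $\iota$, so the integrability condition \eqref{eq:integrability} survives the $K_J$-modification.

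For (ii), I would compute the two symbols on the reduced rank-$n$ bundle. The lower-order term $T_J$ does not affect the symbol, and at $\iota_0$ neither does $K_J$, since the image of $\sigma(H_J)$ already lies in $\mathcal{D}$. Hence, by Lemma \ref{l:symbol_tildeH}, after restriction to sections the flow symbol is $Y\mapsto g(\zeta^\#,Y)\,\zeta^\#$: weakly positive, parabolic along $\langle\zeta^\#\rangle$ and with kernel $\langle\zeta^\#\rangle^\perp$ of dimension $n-1$. Restricting the symbol of $Z\mapsto L_\iota(Z)$ to $\mathcal{D}\cong TL$ and using $\iota^*\overline{\omega}(J\iota_*Y,\cdot)=-g(Y,\cdot)$ gives $Y\mapsto-\zeta\wedge Y^\flat$, whose kernel is exactly $\langle\zeta^\#\rangle$ and which is injective on $\langle\zeta^\#\rangle^\perp$. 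Thus $\ker\sigma(\text{flow})=\langle\zeta^\#\rangle^\perp$ and $\ker\sigma(L_\iota)=\langle\zeta^\#\rangle$ meet only in $0$: every degenerate direction of the flow operator is detected by $L_\iota$, while the one parabolic direction is precisely where $L_\iota$ degenerates. This is the symbol interlocking underlying Hamilton's integrability mechanism. These are open conditions, so by Lemma \ref{l:symbol_tildeH} they persist for $\iota$ near $\iota_0$.

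With (i) and (ii) verified, \cite[Theorem 5.1]{Hamilton} applies and yields a unique short-time solution of the reformulated equation, hence a unique short-time $\iota_t\in\mathcal{S}[\iota_0]$. I expect the main obstacle to be not any of the symbol computations but the faithful matching of this geometric picture to the precise analytic hypotheses of Hamilton's abstract framework: one must verify that weak parabolicity in the single good direction, together with the first-order identity \eqref{eq:integrability} pinning down the remaining $n-1$ directions, supplies exactly the structure needed for the Nash--Moser iteration to close, and that the resulting uniqueness genuinely holds in the tame Fréchet category of sections of $TL$ rather than only modulo the reparametrization freedom already removed by passing to $\mathcal{S}[\iota_0]$.
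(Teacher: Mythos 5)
Your proposal is correct and follows essentially the same route as the paper: reformulate the flow as an equation on sections of $TL$ via the map $G$, observe that $L_\iota$ annihilates $K_J$ so the integrability condition \eqref{eq:integrability} is preserved, verify Hamilton's positivity criterion on $\mbox{Ker}\left(\sigma(L_\iota)(\zeta)\right)\cap\mathcal{D}$ (which at $\iota_0$ is the line $\langle J\iota_*\zeta^\#\rangle$, the positive eigendirection of $\sigma(H_J)$), and invoke \cite[Theorem 5.1]{Hamilton}. The only point where the paper is slightly more careful is the persistence of the condition for $\iota$ near $\iota_0$: rather than a bare appeal to openness, it notes that the dimension of $\mbox{Ker}\left(\sigma(L_\iota)(\zeta)\right)\cap\mathcal{D}$ can only decrease under perturbation yet is forced to remain at least one by the integrability condition, after which openness of positivity closes the argument.
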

\begin{proof}
 As already discussed, we linearize the setting ($t$-independently) for this problem by applying $G$: $\mathcal{S}[\iota_0]$ then corresponds to the sections of $TL$, setting us exactly in the correct framework for applying \cite[Theorem 5.1]{Hamilton}. We need only check that the positivity condition of that theorem is satisfied: in terms of $\mathcal{S}[\iota_0]$, this corresponds to proving that the symmetric endomorphism $\sigma(H_J[\iota]+T_J[\iota]-K_J[\iota])(\zeta)_{|\iota(x)}$ is positive for 
 $\iota$ near $\iota_0$ when restricted to 
$$\mbox{Ker}\left(\sigma(Z\mapsto L_\iota(Z))(\zeta)\right)\cap \mathcal{D}_{\iota(x)}.$$
When $\iota=\iota_0$
this space coincides with $\langle J\iota_*\zeta^\#\rangle$: it has dimension one and the positivity condition is fulfilled. For small perturbations of $\iota$
this dimension can only decrease. However, we know that it must always be at least $1$-dimensional  because $H_J+T_J-K_J$ satisfies the integrability condition. Furthermore, positivity is an open condition. This proves that  Hamilton's criterion holds, so we can apply \cite[Theorem 5.1]{Hamilton} to obtain the existence and uniqueness of the solution $\iota_t$ within the class $\mathcal{S}[\iota_0]$.
\end{proof}
\begin{cor}\label{Lag.MF.exist.cor}
  Assume $(M,J,\bar\omega)$ is almost K\"ahler and satisfies $\tilde{P}=2\lambda\bar{\omega}$ for some $\lambda\in\R$ (as defined in Section \ref{ss:SCF}) and that $\iota_0$ is Lagrangian. Then the Maslov flow
  $$\frac{\partial \iota_t}{\partial t}=H_J[\iota_t]+T_J[\iota_t],\ \ \iota_t|_{t=0}=\iota_0$$
 admits a unique short-time solution $\iota_t:L\to M$ in $\mathcal{S}[\iota_0]$, and it is Lagrangian.
 \end{cor}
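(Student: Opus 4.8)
The plan is to derive Corollary \ref{Lag.MF.exist.cor} from the already-proved Theorem \ref{thm:existence_projectedeq} by showing that, under the extra hypotheses $\widetilde{P}=2\lambda\overline{\omega}$ and $\iota_0$ Lagrangian, the projected (gauge-fixed) flow automatically keeps $\omega_t:=\iota_t^*\overline{\omega}$ equal to zero, and that on Lagrangians the correction term $K_J$ becomes purely tangential, so that at the level of submanifolds the projected flow agrees with the genuine Maslov flow. First I would invoke Theorem \ref{thm:existence_projectedeq}: since $(M,J,\overline{\omega})$ is almost K\"ahler and $\iota_0$ is a totally real (Lagrangian) immersion, there is a unique short-time solution $\iota_t\in\mathcal{S}[\iota_0]$ of $\partial_t\iota_t=H_J[\iota_t]+T_J[\iota_t]-K_J[\iota_t]$. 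The rest of the argument is an analysis of $\omega_t$ along this solution.

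Second, I would show $\omega_t\equiv 0$. Since the ambient structure is fixed and $\overline{\omega}$ is closed, Cartan's formula gives $\partial_t\omega_t=\iota_t^*\mathcal{L}_{V_t}\overline{\omega}=\d\big(\iota_t^*(V_t\lrcorner\overline{\omega})\big)=L_{\iota_t}(V_t)$, where $V_t=H_J+T_J-K_J$ is the velocity and $L_\iota$ is the operator (\ref{eq:Liota}). The key point is that $K_J$ drops out: by construction $K_J[\iota_t]$ takes values in $J\big((\iota_{t*}TL)^\perp\big)=\big(J\iota_{t*}TL\big)^\perp$, which is precisely the kernel of $Z\mapsto\iota_t^*(Z\lrcorner\overline{\omega})$, so $L_{\iota_t}(K_J)=0$. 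Using the integrability identity (\ref{eq:integrability}) (equivalently Propositions \ref{xiJprop}--\ref{HJ-prop}) together with $\widetilde{P}=2\lambda\overline{\omega}$, I then obtain
$$\partial_t\omega_t=L_{\iota_t}(H_J+T_J)=\tfrac{1}{2}\iota_t^*\widetilde{P}=\lambda\,\omega_t.$$
This is a linear ODE for the family of $2$-forms $\omega_t$ on the fixed manifold $L$, so $\omega_t=e^{\lambda t}\omega_0$; as $\omega_0=0$ we get $\omega_t\equiv 0$, i.e.\ $\iota_t$ is Lagrangian for as long as it exists.

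Third, I would identify this with the genuine Maslov flow. For a Lagrangian immersion $(\iota_{t*}TL)^\perp=J\iota_{t*}TL$, hence $J\big((\iota_{t*}TL)^\perp\big)=\iota_{t*}TL$, so the correction $K_J[\iota_t]=\pi(H_J+T_J)$ is a purely \emph{tangential} vector field, while $H_J+T_J\in J\iota_{t*}TL=(\iota_{t*}TL)^\perp$ is \emph{normal}. Consequently the normal part of the projected velocity equals $H_J+T_J$, so the image submanifolds $L_t=\iota_t(L)$ move with exactly the Maslov-flow velocity; equivalently, reparametrising $\iota_t$ by the flow of the tangential field $K_J$ converts it into an honest solution of $\partial_t\iota=H_J+T_J$. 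Uniqueness is inherited from Theorem \ref{thm:existence_projectedeq}, since any Lagrangian Maslov solution lying in the tubular neighbourhood can be gauge-fixed back into $\mathcal{S}[\iota_0]$.

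The analytically hard part is already packaged in Theorem \ref{thm:existence_projectedeq} (Hamilton's Nash--Moser scheme and the symbol computation of Proposition \ref{p:symbol}), so the remaining obstacle is conceptual rather than technical: one must carefully reconcile the gauge-fixed solution, which solves the \emph{projected} equation in $\mathcal{S}[\iota_0]$, with the honestly geometric Maslov flow. The subtlety is that $K_J$ need not vanish pointwise — it is only guaranteed to be tangential once $\omega_t=0$ — so the two equations coincide merely modulo reparametrisation; making the phrase ``solution in $\mathcal{S}[\iota_0]$'' precise requires interpreting the Maslov flow as a flow on submanifolds and checking that the open totally real and Lagrangian conditions, hence the gauge-fixing, persist on the whole short-time interval.
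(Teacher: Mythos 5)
Your proposal is correct and follows essentially the same route as the paper: invoke Theorem \ref{thm:existence_projectedeq}, use the integrability condition together with $\widetilde{P}=2\lambda\overline{\omega}$ and the fact that $K_J$ lies in the kernel of $Z\mapsto\iota^*(Z\lrcorner\overline{\omega})$ to get the ODE $\partial_t\omega_t=\lambda\omega_t$, conclude $\omega_t\equiv 0$, observe that $K_J$ is then tangential, and reparametrise by the flow of the corresponding vector field on $L$ to convert the projected solution into a genuine Maslov flow. The only difference is that you make explicit some points the paper leaves implicit (why $L_{\iota_t}(K_J)=0$, and the uniqueness discussion), which is a faithful elaboration rather than a different argument.
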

 
\begin{proof}
Let $\iota_t$ be the solution obtained in Theorem \ref{thm:existence_projectedeq}.  The integrability  condition, together with the hypothesis $\tilde{P}=2\lambda\bar{\omega}$, shows that $\frac{\partial}{\partial t}\left(\iota_t^*\bar{\omega}\right)=\lambda\iota_t^*\bar{\omega}$, so $\iota_t$ is Lagrangian at each time as $\iota_0$ is Lagrangian. In turn, this implies that $K_J[\iota_t]$ is a tangent vector field, \textit{i.e.}
$$K_J[\iota_t]=(\iota_t)_*(X_t),$$
 for a curve of vector fields $X_t$ on $L$. 
 Let $\phi_t$ be the curve of diffeomorphisms of $L$ obtained by integrating $X_t$ and set $j_t(x):=\iota_t(\phi_t(x))$. Then
 \begin{align*}
 \frac{\partial j_t}{\partial t}&=\frac{\partial \iota_t}{\partial t}_{|\phi_t}+(\iota_t)_*\left(\frac{\partial \phi_t}{\partial t}\right)=H_J[\iota_t]_{|\phi_t}+T_J[\iota_t]_{|\phi_t}-K_J[\iota_t]_{|\phi_t}+(\iota_t)*(X_t)\\
 &=H_J[\iota_t\circ\phi_t]+T_J[\iota_t\circ\phi_t]=H_J[j_t]+T_J[j_t],
 \end{align*}
 solving the equation.
 \end{proof}
\begin{remark}
A key example of almost K\"ahler manifolds with $\tilde{P}=2\lambda\bar{\omega}$ are solitons for symplectic curvature flow, so the previous sections suggest that Corollary \ref{Lag.MF.exist.cor} is yet another manifestion of the interaction between Maslov flow and SCF. It would thus be interesting to extend Theorem \ref{thm:existence_projectedeq} and Corollary \ref{Lag.MF.exist.cor} to the (modified) coupled flow MF+SCF. Specifically, suppose the structure $(\bar{g}_t,J_t,\bar{\omega}_t)$ on $M$ moves by symplectic curvature flow (which we know has short-time existence) with initial condition $(\bar{g},J,\bar{\omega})$. In this case we could define $F$, $G$, $\mathcal{D}$ and $\mathcal{S}[\iota_0]$ as before, with respect to the fixed metric $\bar{g}$: this data serves only to linearize the setting, thus does not need to depend on $t$. Since the totally real condition is open, for $t$ sufficiently small the maps in $\mathcal{S}[\iota_0]$ are also $J_t$-totally real and we can repeat our construction to 
obtain a varying family of splittings
$$T_{\iota(x)}M=\mathcal{D}_{\iota(x)}\oplus J_t(\iota_*(T_xL)^{\perp_t})$$
for $\iota$ near $\iota_0$, thus $t$-dependent projections $\pi_t$ onto the second factor.  We now define $K_J^t[\iota]:=\pi_t(H_J^t[\iota]+T_J^t[\iota])$, where $H_J^t$ and $T_J^t$ are computed using the structure $(\bar{g}_t,J_t,\bar{\omega}_t)$, obtaining a modified Maslov flow.

The situation is thus very similar to the case above, but the existence proof given there fails because now the integrability operator $L$ depends explicitly on $t$ through $\bar\omega_t$: this situation is more complicated than that considered in \cite{Hamilton}, because of the extra terms generated by the $t$-derivative.
\end{remark}

\begin{remark}
A short-time existence result similar to Corollary \ref{Lag.MF.exist.cor} was observed 
in \cite{SmoWang} (when applied to the Chern connection).  
There the authors define a \emph{normal} vector field $\hat{H}$ on any
 totally real submanifold, which agrees up to first order terms with the 
 mean curvature vector $H$. The $\hat{H}$-flow is thus weakly parabolic, and short-time 
 existence follows from standard arguments. A maximum principle method, as 
 in Section \ref{review}, shows that the $\hat{H}$-flow preserves the 
 Lagrangian condition if $\tilde{P}=2\lambda\bar{\omega}$.
We observed in Section \ref{ss:otherflows} that, on Lagrangians, $\hat{H}=H_J+T_J$ and thus the $\hat{H}$-flow coincides with the Maslov flow on Lagrangians. This does not, however, imply that the operators have the same symbol: indeed, we have shown that the symbols are very different. On the one hand, the $\hat{H}$-flow has a much easier existence theory. On the other, it has no special geometric properties on generic totally real submanifolds: it is neither a gradient flow (like $J$-MCF), nor does it couple well with any ambient flow (like Maslov flow).
\end{remark}

\paragraph{Comparison with MCF and uniqueness.}It is useful to stress the analogies and differences with the corresponding proof for MCF. Assume we want to show 
$$\frac{\partial\iota}{\partial t}=H[\iota]$$
admits a solution, which is unique. Let us first restrict to normal variations. We use the initial immersion $\iota_0$ and the exponential map to build a local diffeomorphism defined on (a neighbourhood of the pull-back of) the normal bundle
$$F:\mathcal{U}\subseteq \iota_0^*(\iota_{0*}(TL)^\perp)\rightarrow\mathcal{V}\subseteq M$$ 
and restrict our attention to immersions obtained as sections of $\mathcal{U}$. Notice that, as above, the corresponding distribution $\mathcal{D}$ in $TM$ will be normal to $\iota_*(TL)$ only when $\iota=\iota_0$, so to preserve such sections we must project the equation onto $\mathcal{D}$. This implies that, as a first step, we try to solve the perturbed equation
$$\frac{\partial\iota}{\partial t}=H[\iota]-\pi(H[\iota]).$$
Once again, defining the projection requires the choice of a splitting of $T_{\iota(x)}M$. In this case it is convenient to make the choice
$$T_{\iota(x)}M=\mathcal{D}_{\iota(x)}\oplus \iota_*(TL),$$
because (i) the projected equation is parabolic, so we immediately obtain existence and uniqueness within the class of sections, and (ii) for any initial data, we find ourselves in the same situation as in Corollary \ref{Lag.MF.exist.cor}: since the perturbation is tangential, we can recover a solution to MCF via reparametrization.

It is important to emphasise, however, that for MCF any choice of projection  generates a parabolic equation when restricted to the space of sections: we can thus  make the choice that best allows for returning to the original equation. For Maslov flow, the additional degeneracies make us use the integrability condition to obtain existence, thus forcing us to carefully choose a projection which preserves this condition. This is the main difference between the two flows.

The final step is to prove uniqueness within the wider class of all immersions. In 
this regard the two equations are very similar. The space of sections is locally in 
1:1 correspondence with the space of non-parametrized submanifolds, 
\textit{i.e.}~with the space of immersions modulo reparametrization via diffeomorphisms of $L$. The above uniqueness result within the space of sections implies uniqueness of the corresponding flow in the space of non-parametrized submanifolds. Any two solutions $\iota_t$, $\iota_t'$ to MCF or to the Maslov flow, given the same initial data, thus define the same image submanifold $L_t\subseteq M$ at each time: they differ only by a 1-parameter family of reparametrizations, \textit{i.e.}~$\iota'_t=\iota_t\circ\phi_t$. It follows that the corresponding time derivatives differ only by a tangential term. On the other hand, the equations prescribe a motion which is transverse to the tangent space: orthogonal in the case of MCF, in $J(TL)$ in the case of the Maslov flow. In both 
cases, this implies that such tangential terms must vanish, so $\phi_t\equiv Id$.

 \section{Maslov form in Calabi--Yau manifolds}\label{s:maslov_CY}
 
The class of Ricci-flat KE manifolds contains a special subclass: Calabi--Yau manifolds. In this setting there is a classical notion of Maslov form, which we review in this section and compare with the Maslov form introduced above. This leads to a new characterization of the critical points of the $J$-volume functional.
 
 \subsection{Classical Maslov form}\label{ss:classical_maslov}
 \paragraph{Lagrangian Grassmannian.}Consider the Grassmannian of all oriented $n$-planes in $\C^n$. The Lie group $\U(n)$ acts on it by rotation. Let $\R^n$ denote the $n$-plane spanned by the standard vectors $\partial_{x_1},\dots,\partial_{x_n}$, with the corresponding orientation. The Grassmannian $\text{Lag}^+$ of oriented Lagrangian planes can then be described as the orbit of $\R^n$ under this action. The subgroup preserving $\R^n$ is $\SO(n)$: this shows that $\text{Lag}^+$ can be identified with the homogeneous space $\U(n)/\SO(n)$. Consider the map
 \begin{equation}
  e^{i\theta}:\text{Lag}^+\simeq \U(n)/\SO(n)\rightarrow \mathcal{S}^1,\ \ \pi\simeq [U]\mapsto \mbox{det}_{\C}(U).
 \end{equation}
This defines the \textit{Lagrangian angle} $\theta$ of the oriented Lagrangian plane, up to multiples of  $2\pi$.

Now let $V$ be any $n$-dimensional Hermitian vector space. Choose an isomorphism $\phi:V\rightarrow \C^n$ which identifies corresponding structures. We can then use $\phi$ to identify the oriented Lagrangian Grassmannian of $V$ with $\U(n)/\SO(n)$. Notice that $\phi$ is well-defined only up to left multiplication by $\U(n)$, so the identification of Grassmannians is also well-defined only up to left multiplication. This shows that, in this context, the Lagrangian angle is not well-defined. 

To obtain a Lagrangian angle for planes in $V$, we first observe that $\det_{\C}$ can be identified with the $n$-form $\d z:=\d z^1\wedge\dots\wedge \d z^n$ 
on $\C^n$. Let us thus assume that $V$ is further endowed with a complex $n$-form $\Omega$, which is equal to $\phi^*\d z$. Then the space of isomorphisms identifying all given structures is well-defined up to left multiplication by $\SU(n)$, so the Lagrangian angle is now well-defined.

\paragraph{Totally real Grassmannian.}Analogous considerations allow us to identify the Grassmannian $\text{TR}^+$ of oriented totally real $n$-planes in $\C^n$ with the homogeneous space $\GL(n,\C)/\GL^+(n,\R)$. Recall the standard Polar Decomposition theorem: any $M$ in $\GL(n,\C)$ has a unique decomposition $M=PU$, where $P$ is positive self-adjoint and $U\in\U(n)$. There is an explicit formula: $P=\sqrt{MM^*}$, thus $U=(\sqrt{MM^*})^{-1}M$. Consider the composition 
$$\begin{array}{rcccl}
 e^{i\theta}:\text{TR}^+\simeq \GL(n,\C)/\GL^+(n,\R) & \!\!\!\rightarrow\!\!& \U(n)/\SO(n) &\!\!\!\rightarrow\!\!\! &\mathcal{S}^1\\
 \pi\simeq [M] &\!\!\!\mapsto\!\!\!& [U] &\!\!\!\mapsto\!\!\!& \mbox{det}_{\C}(U)=\frac{\mbox{det}_{\C}(M)}{|\mbox{det}_{\C}(M)|}.
\end{array}$$
Again, this implicitly defines an angle $\theta(\pi)$; if $\pi$ is Lagrangian, it coincides with the Lagrangian angle.

Once again, in a Hermitian vector space $V$ the identification of the oriented totally real Grassmannian with $\GL(n,\C)/\GL^+(n,\R)$ is well-defined only up to left multiplication by $\U(n)$. The projection map $\GL(n,\C)\rightarrow \U(n)$ defined by polar decomposition is equivariant with respect to this multiplication. This implies that, as long as $V$ is further endowed with a $n$-form $\Omega$ as above, we can define the angle of any oriented totally real $n$-plane $\pi$. We can also calculate this angle intrinsically, as follows:
\begin{align}\label{eq:angle_calc}
 e^{i\theta(\pi)} =\Omega(e_1,\dots, e_n) &=\Omega(v_1,\dots,v_n)/|\Omega(v_1,\dots,v_n)|\nonumber\\
 &=\Omega(v_1,\dots,v_n)/|v_1\wedge\dots\wedge v_n|_h,
\end{align}
where $e_1,\dots,e_n$ is a positive orthonormal basis for the Lagrangian plane obtained from $\pi$ via polar decomposition, whilst $v_1,\dots,v_n$ is any
 positive oriented basis of $\pi$.

\paragraph{Classical Maslov form.}Now consider $\C^n$ as a manifold. The oriented Lagrangian Grassmannian is a trivial fibre bundle over $\C^n$; we can identify it with $\U(n)/\SO(n)\times\C^n$. Let $\iota:L\rightarrow\C^n$ be a Lagrangian immersion. The \textit{Lagrangian angle} of $L$ is the function $\theta_L$ on $L$ defined by $\theta_L(x):=\theta(T_xL)$. Consider the corresponding map
\begin{equation}
 e^{i\theta_L}:L\rightarrow \mathcal{S}^1.
\end{equation}
The \textit{classical Maslov form} is then the $1$-form $\mu_L:=(e^{i\theta_L})^*\d\theta=\d\theta_L$. Notice that this is a well-defined closed form on $L$, even though the underlying angle is only well-defined up to multiples of $2\pi$.

\paragraph{Extensions of the classical Maslov form.}It is simple to extend the definition of the classical Maslov form in two ways. 

First, assume we have a totally real immersion $\iota:L\rightarrow\C^n$. We can then define an angle function as above, setting $\theta_L(x):=\theta(T_xL)$. The \textit{classical Maslov form} is then $\mu_L:=(e^{i\theta_L})^*\d\theta=\d\theta_L$.

Second, assume $M$ is an almost Hermitian manifold endowed with a global non-zero smooth $(n,0)$-form $\Omega$, normalized to have length 1. In particular this 
implies that $K_M$ is differentiably trivial. For each $p\in M$, $V:=T_pM$ is a Hermitian vector space endowed with a form $\Omega[p]$, isomorphic to $\C^n$ with 
its standard structures. In this case the Grassmannians of oriented totally real and Lagrangian planes are not trivial bundles over $M$, but they still have standard 
fibres $\GL(n,\C)/\GL^+(n,\R)$ and $\U(n)/\SO(n)$, respectively. 

Let $\iota:L\rightarrow M$ be a totally real immersion. We can define the angle function pointwise, as above. It will be smooth (though well-defined only up to multiples 
of $2\pi$) because the data on $M$ is smooth. We then obtain a form $\mu_L$ as above.

\subsection{Comparison in Calabi--Yau manifolds}\label{ss:comparison}
In Section \ref{ss:canonical_data} we defined a notion of Maslov form $\xi_J$
for a totally real submanifold, valid in great generality: there, $M$ was any almost complex manifold endowed only with the additional structure of a Hermitian metric and connection on $K_M$. In Section \ref{ss:classical_maslov} we reviewed the classical definition of the Maslov form $\mu_L$. We now want to compare these two definitions.

Before proceeding we should notice the following closely related facts.
\begin{itemize}
 \item There is a discrepancy between the contexts of the two definitions: the classical Maslov form requires an almost Hermitian structure on $M$ plus a choice of $\Omega$. It therefore requires the topological restriction $c_1(M)=0$. It does not however require a connection on $K_M$. To compare the two definitions, we will need to choose a common setting. 
\item In the classical setting the Maslov form is always closed, though usually it is not exact because in general there will be no way to resolve the fact that $\theta_L$ is well-defined only up to multiples of $2\pi$. It thus defines a cohomology class on $L$, known as the \textit{Maslov class}. Notice that the class of $\d\theta$ is an integral class in the cohomology of $\mathcal{S}^1$ (at least up to normalization). This implies that the Maslov class is also integral. 

In our setting Proposition \ref{xiJconn.prop} shows that in general $\xi_J$ is not closed, so there is no notion of Maslov class. In particular, integrating $\xi_J$ along a loop in $M$ depends on the specific curve, so it will not yield integral values. Proposition \ref{xiJconn.prop} shows however that $\xi_J$ will be closed if $c_1(M)=0$.
\end{itemize}

The previous comments show that we should concentrate on manifolds for which $c_1(M)=0$. The most important such class is the following.

\begin{dfn}
 A K\"ahler manifold $(M,\overline{g},J,\overline{\omega})$ is \textit{Calabi--Yau} (CY) if it is endowed with a global section $\Omega$ of $K_M$, parallel with respect to the Levi-Civita connection and normalized to have constant length $1$. 
 \end{dfn}
 
 The existence of a parallel tensor implies a reduction of the holonomy group of $(M,\overline{g})$. In this case, the holonomy group is contained in $\SU(n)$, so the metric is Ricci-flat. Furthermore $\Omega$ is holomorphic, so $K_M$ is holomorphically trivial. 

\begin{lem}\label{l:same_angle}
Let $M$ be an almost Hermitian manifold endowed with a global non-zero $(n,0)$-form $\Omega$, normalized to have length 1. Let $\iota:L\rightarrow M$ be a totally real submanifold with angle function $\theta_L$. Let $\Omega_J$ denote the canonical section of $K_M[\iota]$, as in Section \ref{ss:canonical_data}. Then
 \begin{equation*}
  \Omega=e^{i\theta_L}\Omega_J.
 \end{equation*}
\end{lem}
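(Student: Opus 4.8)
The plan is to exploit that $\Omega$ and $\Omega_L$ are both sections of the complex line bundle $K_M|_L$, and to pin down the phase relating them by evaluating on a single well-chosen frame. Fix $p\in L$ and a positively oriented basis $v_1,\dots,v_n$ of $T_pL$. Since $L$ is totally real, $T_pL\oplus J(T_pL)=T_pM$, so $v_1,\dots,v_n$ is simultaneously a complex basis of $(T_pM,J)$. Consequently any $(n,0)$-form, being complex-multilinear and alternating on $(T_pM,J)$, is determined by its value on the $n$-tuple $(v_1,\dots,v_n)$. As $\Omega[p]$ and $\Omega_L[p]$ both lie in the one-dimensional space $K_M(p)$, it therefore suffices to compare $\Omega(v_1,\dots,v_n)$ with $\Omega_L(v_1,\dots,v_n)$.

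Next I would compute each side. For $\Omega_L$, recall from Section \ref{ss:canonical_data} that $\Omega_L[p]=\sigma[\iota_*(T_pL)]=\dfrac{v_1^*\wedge\dots\wedge v_n^*}{|v_1^*\wedge\dots\wedge v_n^*|_h}$, where $v_j^*$ is the complexified dual basis determined by $v_j^*(v_k)=\delta_{jk}$ and $v_j^*(Jv_k)=i\delta_{jk}$, so that each $v_j^*$ is $\C$-linear. Hence $(v_1^*\wedge\dots\wedge v_n^*)(v_1,\dots,v_n)=\det\big(v_j^*(v_k)\big)=1$, giving $\Omega_L(v_1,\dots,v_n)=|v_1^*\wedge\dots\wedge v_n^*|_h^{-1}$. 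For $\Omega$, the intrinsic angle formula \eqref{eq:angle_calc} gives directly $\Omega(v_1,\dots,v_n)=e^{i\theta_L(p)}\,|v_1\wedge\dots\wedge v_n|_h$.

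The remaining ingredient, which I expect to be the delicate point requiring careful convention-matching, is the norm duality $|v_1^*\wedge\dots\wedge v_n^*|_h\cdot|v_1\wedge\dots\wedge v_n|_h=1$. This says that on the one-dimensional space $\Lambda^n_\C(T_pM)$, equipped with the $h$-induced metric, the form $v_1^*\wedge\dots\wedge v_n^*$ is the metric dual of the vector $v_1\wedge\dots\wedge v_n$ up to the canonical pairing value $1$, so the two norms are reciprocal. I would verify it by reducing to a positive orthonormal basis $e_1,\dots,e_n$ of $\pi$ via some $A\in\GL^+(n,\R)$: under this change $|v_1\wedge\dots\wedge v_n|_h$ scales by $\det A$ while $|v_1^*\wedge\dots\wedge v_n^*|_h$ scales by $(\det A)^{-1}$ (exactly as in the basis-independence check for $\sigma$ in Section \ref{ss:canonical_data}), so the product is basis-independent. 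For the orthonormal frame I would then invoke \eqref{h.mod.eq}, namely $|e_1^*\wedge\dots\wedge e_n^*|_h=({\det}_\C h_{ij})^{-1/2}$, together with $|e_1\wedge\dots\wedge e_n|_h^2={\det}_\C h(e_i,e_j)={\det}_\C h_{ij}$, whose product is $1$.

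Combining these computations yields $\Omega_L(v_1,\dots,v_n)=|v_1\wedge\dots\wedge v_n|_h$, and hence $\Omega(v_1,\dots,v_n)=e^{i\theta_L}\,\Omega_L(v_1,\dots,v_n)$. Since an $(n,0)$-form is determined by its value on the complex frame $(v_1,\dots,v_n)$, this forces $\Omega[p]=e^{i\theta_L(p)}\,\Omega_L[p]$ in $K_M(p)$ for every $p$, i.e.\ $\Omega=e^{i\theta_L}\Omega_L$ as sections over $L$, as claimed.
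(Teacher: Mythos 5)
Your proof is correct and follows essentially the same route as the paper's: compare the two $(n,0)$-forms on an oriented basis of $T_pL$, use the normalization defining $\Omega_L$, invoke \eqref{eq:angle_calc}, and conclude via the reciprocity $|v_1\wedge\dots\wedge v_n|_h\cdot|v_1^*\wedge\dots\wedge v_n^*|_h=1$. The only difference is that you supply a verification of that norm duality (via the $\det A$ scaling argument and \eqref{h.mod.eq}), which the paper simply asserts.
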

 \begin{proof}
 Choose $x\in L$ and let $v_1,\dots,v_n$ be an oriented basis of $T_xL$. It suffices to prove that 
 $$\Omega(v_1,\dots,v_n)=e^{i\theta_L}\Omega_J(v_1,\dots,v_n).$$
 Recall that $\Omega_J=\frac{v_1^*\wedge\dots\wedge v_n^*}{|v_1^*\wedge\dots\wedge v_n^*|_h}$.
Since
 $$|v_1\wedge\dots\wedge v_n|_h\cdot|v_1^*\wedge\dots\wedge v_n^*|_h=1,$$
the result then follows from (\ref{eq:angle_calc}).
 \end{proof}
\begin{remark}
 Since both $\Omega$ and $\Omega_J$ have length 1, they are related by some angle function. The above lemma shows that this angle function is precisely $\theta_L$.
\end{remark}

\begin{prop}
 Let $M$ be a CY manifold and $\iota:L\rightarrow M$ be totally real.
 Then $\xi_J=-\mu_L$.
\end{prop}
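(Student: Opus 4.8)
The plan is to differentiate the pointwise identity of Lemma \ref{l:same_angle}, exploiting the fact that in a Calabi--Yau manifold the natural unitary connection is forced to be the Levi--Civita connection and that $\Omega$ is parallel with respect to it.

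First I would record the three ingredients. Since $M$ is K\"ahler, the canonical unitary connection $\tnabla$ used to define $\xi_J$ is $\overline{\nabla}$ (cf.~Section \ref{ss:special}); since $M$ is Calabi--Yau, $\overline{\nabla}\Omega=0$, so the pulled-back form satisfies $\tnabla_X\Omega=0$ for every $X$ tangent to $L$. By the definition of the Maslov form in Section \ref{ss:canonical_data}, the canonical section obeys $\tnabla_X\Omega_L=i\xi_J(X)\Omega_L$. Finally, Lemma \ref{l:same_angle} gives $\Omega=e^{i\theta_L}\Omega_L$ along $L$, and by definition the classical Maslov form is $\mu_L=\d\theta_L$.

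Next I would differentiate $\Omega=e^{i\theta_L}\Omega_L$ along a tangent vector $X$. Treating $e^{i\theta_L}$ as a smooth $\mathcal{S}^1$-valued function (so that its multivaluedness never enters, only its well-defined logarithmic derivative $X(e^{i\theta_L})=i\mu_L(X)e^{i\theta_L}$), the Leibniz rule gives
\begin{equation*}
\tnabla_X\Omega=i\mu_L(X)e^{i\theta_L}\Omega_L+e^{i\theta_L}\tnabla_X\Omega_L=i\big(\mu_L(X)+\xi_J(X)\big)e^{i\theta_L}\Omega_L.
\end{equation*}
Since the left-hand side vanishes by parallelism of $\Omega$, and $e^{i\theta_L}\Omega_L\neq 0$, I conclude $\mu_L+\xi_J=0$, i.e.~$\xi_J=-\mu_L$.

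The only delicate point---and the place where an error is most likely---is the bookkeeping of signs and conventions: one must use the defining relation $A=i\xi_J$, $\tnabla\Omega_L=A\otimes\Omega_L$ of Section \ref{ss:canonical_data}, together with the logarithmic-derivative identity $\d(e^{i\theta_L})=i\,\mu_L\,e^{i\theta_L}$ where $\mu_L=(e^{i\theta_L})^*\d\theta$. These together produce the minus sign. No genuine analytic difficulty arises: the statement is an immediate consequence of the parallelism of $\Omega$ once the identity of Lemma \ref{l:same_angle} and the connection formula for $\Omega_L$ are in hand.
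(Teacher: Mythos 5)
Your proof is correct and is essentially identical to the paper's: both differentiate the identity $\Omega=e^{i\theta_L}\Omega_L$ from Lemma \ref{l:same_angle} using $\overline{\nabla}\Omega=0$ and the defining relation $\tnabla\Omega_L=i\xi_J\otimes\Omega_L$, then cancel the nonvanishing factor $e^{i\theta_L}\Omega_L$. Your attention to the sign conventions is well placed (the paper's own formulas in Sections \ref{ss:canonical_data} and \ref{s:hermitian} are not perfectly consistent on the sign of $A=i\xi_J$), but the argument itself matches the paper's.
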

\begin{proof}
 Since $\Omega$ is parallel, Lemma \ref{l:same_angle} shows that
 $$0=\overline{\nabla}\Omega=\d (e^{i\theta_L})\otimes\Omega_J+e^{i\theta_L}\overline{\nabla}\Omega_J.$$
 Thus
 $$i\xi_J \otimes\Omega_J=\overline{\nabla}\Omega_J=-\frac{\d (e^{i\theta_L})}{e^{i\theta_L}}\otimes\Omega_J=-\d(\log e^{i\theta_L})\otimes\Omega_J=-i\d\theta_L \otimes\Omega_J.$$
 Since $\mu_L=\d\theta_L$, the result follows.
\end{proof}

Using Proposition \ref{HJ-prop} leads to the following, interesting, characterization.

\begin{cor}\label{cor:minimal_angle}
 Let $M$ be a CY manifold and $\iota:L\rightarrow M$ be totally real. Then $H_J=J\nabla \theta_L$, so $L$ is a critical point of the $J$-volume functional if and only if $\theta_L$ is constant.
\end{cor}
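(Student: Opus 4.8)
The plan is to chain together three facts already in hand: the formula of Proposition~\ref{HJ-prop} relating $H_J$ to the Maslov form $\xi_J$, the vanishing of the torsion correction terms in the K\"ahler (hence CY) setting, and the identification $\xi_J=-\mu_L=-\d\theta_L$ from the immediately preceding proposition.

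First I would note that a CY manifold is K\"ahler, so the canonical unitary connection $\tnabla$ coincides with the torsion-free Levi-Civita connection $\overline{\nabla}$; that is, $\widetilde{T}=0$. By the defining formulae \eq{TJ.eq} and \eq{SJ.eq} this forces $T_J=0$ and $S_J=0$. Proposition~\ref{HJ-prop} then collapses to $\xi_J^\sharp=JH_J$, where $\xi_J^\sharp$ denotes the vector field $g$-dual to $\xi_J$.

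Next I would apply the preceding proposition, $\xi_J=-\mu_L=-\d\theta_L$, and dualize with respect to the induced metric $g$ on $L$ to obtain $\xi_J^\sharp=-\nabla\theta_L$. Combining with the previous step gives $JH_J=-\nabla\theta_L$; applying $J$ and using $J^2=-\mathrm{Id}$ yields the claimed identity $H_J=J\nabla\theta_L$. For the criticality statement I would recall that, in the K\"ahler case, the first-variation formula of Proposition~\ref{first.var.prop} identifies the negative gradient of $\Vol_J$ with $H_J+S_J=H_J$, so $L$ is a critical point precisely when $H_J=0$. Since $J$ is an isomorphism, $H_J=J\nabla\theta_L=0$ if and only if $\nabla\theta_L=0$, i.e.~$\theta_L$ is locally constant (constant on each connected component of $L$).

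I do not anticipate any serious obstacle: the corollary is essentially bookkeeping once Proposition~\ref{HJ-prop}, the torsion-freeness of CY manifolds, and the identity $\xi_J=-\d\theta_L$ are in place. The only points needing mild care are the sign conventions together with the correct use of $J^2=-\mathrm{Id}$ when passing from $JH_J=-\nabla\theta_L$ to $H_J=J\nabla\theta_L$, and the interpretation of ``$\theta_L$ constant'': although $\theta_L$ is a priori defined only modulo $2\pi$, its differential $\d\theta_L$, and hence the condition $\nabla\theta_L=0$, is globally well-defined.
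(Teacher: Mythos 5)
Your proposal is correct and follows exactly the route the paper intends: the paper gives no explicit proof, merely saying that Proposition \ref{HJ-prop} (with $T_J=S_J=0$ since the Chern connection is the torsion-free Levi-Civita connection in the K\"ahler/CY case) combined with $\xi_J=-\d\theta_L$ yields the identity, and criticality then reduces to $H_J=0$ via the first-variation formula. Your sign bookkeeping ($JH_J=-\nabla\theta_L\Rightarrow H_J=J\nabla\theta_L$) and your remark that $\d\theta_L$ is globally well-defined even though $\theta_L$ is only defined modulo $2\pi$ are both accurate.
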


\section{The Calabi--Yau calibration}\label{s:calibration}

Recall the standard setting of calibrated geometry. We start with a Riemannian manifold $(M,\overline{g})$. A differential $k$-form 
$\alpha$ on $M$ is a \textit{calibration} if:
\begin{itemize}
 \item it is closed, \textit{i.e.}~$\d\alpha=0$;
 \item it is bounded by the Riemannian volume in the following sense. Let $\text{Gr}^+(k,M)$ denote the Grassmannian bundle of oriented $k$-planes in $M$. We ask that, for any $\pi\in \text{Gr}^+(k,M)$,
\begin{equation}\label{comass.eq}
\alpha_{|\pi}\leq \vol_{g}[\pi],
\end{equation}
where $\vol_{g}[\pi]$ denotes the induced volume form.
 \end{itemize}
 It is simple to check that $-\alpha$ is also a calibration.
 
A $k$-dimensional oriented submanifold $\iota:L\rightarrow M$ is \textit{calibrated}  (by $\alpha$) if it achieves the equality: $\iota^*\alpha\equiv \vol_g$. In this case a simple computation (cf.~Lemma \ref{STRmin.lem} below) shows that $L$ is volume-minimizing in its oriented homology class. Notice that the same submanifold with the opposite orientation is then calibrated by $-\alpha$.

\medskip

Calabi--Yau manifolds $(M,\overline{g},J,\overline{\omega},\Omega)$ are a well-known example. Since $\Omega$ is parallel, $\Ree(\Omega)$ is closed.  Since $|\Omega|_h\equiv 1$, we have  
$|\Omega(e_1,\dots,e_n)|\leq 1$ for orthonormal vectors $e_1,\ldots,e_n$. Hence $|\Ree(\Omega)(e_1,\dots,e_n)|\leq 1$, so
\begin{equation}\label{eq:CY_ineq}
 \Ree(\Omega)_{|\pi}\leq \mbox{vol}_{g}[\pi].
\end{equation}
The equality condition $|\Omega(e_1,\dots,e_n)|=1$ is particularly interesting: one can check that it is equivalent to the condition that the plane $\pi$ generated by these vectors is Lagrangian, thus providing a new characterization of Lagrangian planes. 
Splitting $\Omega$ into real and imaginary parts, it follows that if equality holds in \eq{eq:CY_ineq} then 
$\pi$ must be Lagrangian and $\Imm(\Omega)_{|\pi}\equiv 0$. The converse also holds, 
thus characterizing the submanifolds $\iota:L\rightarrow M$ calibrated by $\pm\Ree(\Omega)$ as those for which 
$\iota^*\overline{\omega}\equiv 0$ and $\iota^*\Imm(\Omega)\equiv 0$. 

More generally, for any fixed $\theta$, $e^{i\theta}\Omega$ has the same properties as $\Omega$ 
so we get an $\mathcal{S}^1$-family of calibrations on $M$. The submanifolds calibrated by any $\Ree(e^{i\theta}\Omega)$ 
are called \textit{special Lagrangian} (SL). We can characterize them as follows.
\begin{lem}\label{l:equiv_SL}
 Let $M$ be a CY manifold and $\iota:L\rightarrow M$ be an immersion with $L$ connected. The following are equivalent characterizations of the SL condition:
 \begin{itemize}
  \item[(a)] $\iota^*\Ree(e^{i\theta}\Omega)\equiv \vol_g$ (for some $\theta$);
  \item[(b)] $\iota^*\overline{\omega}\equiv 0$ and $\iota^*\Imm(e^{i\theta}\Omega)\equiv 0$ (for some $\theta$);
  \item[(c)] $\iota^*\overline{\omega}\equiv 0$ and the Lagrangian angle $\theta_L$ is constant;
  \item[(d)] $L$ is minimal Lagrangian.
 \end{itemize}
\end{lem}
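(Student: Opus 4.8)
The plan is to prove the four conditions equivalent as three separate bidirectional equivalences, (a)$\Leftrightarrow$(b), (b)$\Leftrightarrow$(c) and (c)$\Leftrightarrow$(d), since each draws on a different ingredient established earlier: the equality analysis of the calibration inequality \eq{eq:CY_ineq}, the angle identity of Lemma \ref{l:same_angle}, and the pair consisting of Corollary \ref{cor:minimal_angle} and Proposition \ref{prop:same_crit_pts}.

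For (a)$\Leftrightarrow$(b) I would apply the equality discussion preceding the lemma to the calibration $\Ree(e^{i\theta}\Omega)$, which enjoys the same properties as $\Ree(\Omega)$. Pointwise equality in \eq{eq:CY_ineq} for $e^{i\theta}\Omega$ forces each tangent plane to be Lagrangian and $\iota^*\Imm(e^{i\theta}\Omega)$ to vanish, giving (b) with the same $\theta$; conversely, if $L$ is Lagrangian and $\iota^*\Imm(e^{i\theta}\Omega)\equiv 0$, then $\iota^*(e^{i\theta}\Omega)$ is a nowhere-vanishing real $n$-form of modulus $\vol_g$, hence equals $\pm\vol_g$, and replacing $\theta$ by $\theta+\pi$ if necessary recovers (a).

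For (b)$\Leftrightarrow$(c) the key input is Lemma \ref{l:same_angle}, which upon pulling back and using $\iota^*\Omega_L=\vol_J$ gives $\iota^*\Omega=e^{i\theta_L}\vol_J$. Under the standing Lagrangian hypothesis, Lemma \ref{Lag.Jvol.lem} yields $\vol_J=\vol_g$, so
$$\iota^*\Imm(e^{i\theta}\Omega)=\sin(\theta+\theta_L)\,\vol_g.$$
This vanishes precisely when $\sin(\theta+\theta_L)\equiv 0$, i.e.\ when $\theta+\theta_L$ takes values in the discrete set $\pi\Z$; since $L$ is connected and $\theta_L$ is continuous, this is equivalent to constancy of $\theta_L$, with the constant absorbed into the choice of $\theta$.

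Finally, for (c)$\Leftrightarrow$(d) I would combine two earlier results. Because $M$ is K\"ahler and $L$ is Lagrangian, Proposition \ref{prop:same_crit_pts} (with $S_J=0$) gives $H_J=H$, the ordinary mean curvature vector, while Corollary \ref{cor:minimal_angle} gives $H_J=J\nabla\theta_L$. Since $J$ is invertible and $L$ is connected, $\theta_L$ is constant $\Leftrightarrow$ $H_J=0$ $\Leftrightarrow$ $H=0$; together with the Lagrangian condition this is exactly minimality of $L$. The main obstacle is not any single computation but the bookkeeping in (b)$\Leftrightarrow$(c): one must keep the ambient constant $\theta$ and the intrinsic angle $\theta_L$ additive, and genuinely invoke connectedness of $L$ to upgrade the pointwise condition $\sin(\theta+\theta_L)\equiv 0$ to constancy of $\theta_L$ rather than mere confinement to a discrete set modulo $\pi$. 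One should also note that, since a Calabi--Yau metric is Ricci-flat, Proposition \ref{min.Lag.lim.prop} is unavailable here and the identification $H_J=H$ must come from Proposition \ref{prop:same_crit_pts}; with these identities in hand, all remaining steps are routine assembly.
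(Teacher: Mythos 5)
Your proof is correct and follows essentially the same route as the paper, whose own proof simply records that (a)--(c) follow from the definitions (i.e.\ the equality analysis of \eq{eq:CY_ineq} together with Lemma \ref{l:same_angle}) and that the equivalence with (d) follows from Corollary \ref{cor:minimal_angle}. Your only genuine addition is to make explicit the identification $H_J=H$ for Lagrangians in the K\"ahler setting via Proposition \ref{prop:same_crit_pts} (correctly noting that Proposition \ref{min.Lag.lim.prop} is unavailable in the Ricci-flat case), a step the paper leaves implicit.
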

\begin{proof}
The SL property is defined by (a). The equivalence of (a)-(c) follows from the definitions. 
The equivalence with (d) follows from Corollary \ref{cor:minimal_angle}.
\end{proof}

\subsection{{\boldmath $J$}-volume and the CY calibration}\label{ss:CY_calibration}
Recall that in Section \ref{ss:canonical_data} we associated to any oriented totally real $n$-plane $\pi$ the $n$-form $\vol_J[\pi]$. We can use it to strengthen (\ref{eq:CY_ineq}), thus decoupling the two conditions in Lemma \ref{l:equiv_SL}(b), as follows.

\begin{lem}\label{Omega.ineq.lem}
Let $M$ be an almost Hermitian manifold endowed with a global non-zero $(n,0)$-form $\Omega$, normalized to have length 1. Fix $\pi\in \text{\emph{TR}}^+$. Then 
$$\Ree(\Omega)_{|\pi}\leq \vol_J[\pi]\leq \vol_g[\pi].$$
Furthermore, we have that equality holds
\begin{itemize}
\item in the first relation if and only if $\Imm(\Omega)_{|\pi}=0$ and $\Ree(\Omega)_{|\pi}>0$;
\item and in the second relation if and only if $\pi$ is Lagrangian.
\end{itemize}
\end{lem}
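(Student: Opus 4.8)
The plan is to reduce both inequalities to scalar statements by writing all three $n$-forms on $\pi$ as multiples of a single strictly positive volume form, and then to extract the equality cases from elementary trigonometry together with the sharp bound $\rho_J\leq 1$.

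The starting point is the observation that \eq{eq:angle_calc} is purely a statement about one totally real plane: for any positively oriented basis $v_1,\dots,v_n$ of $\pi$ it reads $\Omega(v_1,\dots,v_n)=e^{i\theta(\pi)}\,|v_1\w\dots\w v_n|_h$. On the other hand $\vol_J[\pi](v_1,\dots,v_n)=\sigma[\pi]_{|\pi}(v_1,\dots,v_n)=|v_1\w\dots\w v_n|_h>0$, using $(v_1^*\w\dots\w v_n^*)(v_1,\dots,v_n)=1$ and the identity $|v_1^*\w\dots\w v_n^*|_h\cdot|v_1\w\dots\w v_n|_h=1$ recalled in the proof of Lemma \ref{l:same_angle}. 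Combining these gives the key identity $\Omega_{|\pi}=e^{i\theta(\pi)}\,\vol_J[\pi]$, and taking real and imaginary parts yields
$$\Ree(\Omega)_{|\pi}=\cos\theta(\pi)\,\vol_J[\pi],\qquad \Imm(\Omega)_{|\pi}=\sin\theta(\pi)\,\vol_J[\pi],$$
where $\vol_J[\pi]$ is a strictly positive $n$-form on $\pi$ (strict positivity being exactly $\rho_J(\pi)>0$, which holds because $\pi$ is totally real).

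For the first inequality, since $\cos\theta(\pi)\leq 1$ and $\vol_J[\pi]>0$, I immediately obtain $\Ree(\Omega)_{|\pi}\leq\vol_J[\pi]$. Equality forces $\cos\theta(\pi)=1$, hence $\sin\theta(\pi)=0$, so $\Imm(\Omega)_{|\pi}=0$, and also $\Ree(\Omega)_{|\pi}=\vol_J[\pi]>0$. Conversely, $\Imm(\Omega)_{|\pi}=0$ gives $\sin\theta(\pi)=0$, so $\theta(\pi)\in\{0,\pi\}$; the extra hypothesis $\Ree(\Omega)_{|\pi}>0$ rules out $\theta(\pi)=\pi$ (where $\cos=-1$), leaving $\theta(\pi)=0$ and hence equality. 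This is the only step requiring care, and it is the main (minor) obstacle: the sign condition $\Ree(\Omega)_{|\pi}>0$ is precisely what distinguishes $\theta=0$ from $\theta=\pi$, both of which annihilate the imaginary part.

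For the second inequality I would simply recall the factorization $\vol_J=\rho_J\,\vol_g$ together with the bound $\rho_J(\pi)\leq 1$ and its equality case, both established in the discussion preceding Lemma \ref{Lag.Jvol.lem} (equivalently, from \eq{h.mod.eq} and the identification ${\det}_{\C}h_{ij}=\vol_{\overline g}(e_1,\dots,e_n,Je_1,\dots,Je_n)$). Since $\vol_g[\pi]>0$, this gives $\vol_J[\pi]\leq\vol_g[\pi]$, with equality if and only if $\rho_J(\pi)=1$, i.e. if and only if $\pi$ is Lagrangian. No new ideas are needed here; the content of the lemma lies entirely in the clean identity $\Omega_{|\pi}=e^{i\theta(\pi)}\vol_J[\pi]$ and in bookkeeping the two equality conditions separately.
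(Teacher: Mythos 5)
Your proposal is correct and follows essentially the same route as the paper: both arguments rest on the identity $\Omega_{|\pi}=e^{i\theta(\pi)}\vol_J[\pi]$ (Lemma \ref{l:same_angle} / \eq{eq:angle_calc}) for the first inequality and on the factorization $\vol_J=\rho_J\vol_g$ with $\rho_J\leq 1$ (equality iff Lagrangian) for the second. Your explicit $\cos\theta$/$\sin\theta$ bookkeeping merely spells out the equality-case analysis that the paper's proof leaves to the reader after noting $|\Ree(\Omega)(v_1,\dots,v_n)|\leq|\Omega(v_1,\dots,v_n)|=|\vol_J(v_1,\dots,v_n)|$.
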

\begin{proof}
Choose a positively oriented basis $v_1,\dots,v_n$ of $\pi$. We need to prove that 
$$\Ree(\Omega)(v_1,\dots,v_n)\leq \vol_J[\pi](v_1,\dots,v_n)\leq \vol_g[\pi](v_1,\dots,v_n).$$
This statement concerns numbers obtained by evaluating $n$-forms. Recall that $\vol_J$ is simply the restriction of the canonical section $\Omega_J$ and, cf.~Lemma \ref{l:same_angle}, that $\Omega=e^{i\theta_L}\Omega_J$. Thus the absolute values satisfy:
\begin{equation*}
|\Ree(\Omega)(v_1,\dots,v_n)|\leq |\Omega(v_1,\dots,v_n)|=
|\vol_J(v_1,\dots,v_n)|.
\end{equation*}
The result now follows.
\end{proof}

\begin{remark}
Notice that the inequalities in Lemma \ref{Omega.ineq.lem} hold trivially for oriented $n$-planes $\pi\notin \text{TR}^+$, \text{i.e.}~partially complex $n$-planes,
since $\Ree(\Omega)_{|\pi}=\vol_J[\pi]=0$.
\end{remark}

Let us apply Lemma \ref{Omega.ineq.lem} when $M$ is CY. We then find that 
$$\d\Ree(\Omega)=0,\ \ \Ree(\Omega)_{|\pi}\leq \vol_J[\pi].$$
We will say that $\Ree(\Omega)$ is a \textit{calibration} on $M$ \textit{tamed} by $\vol_J$.  As before $\Ree(e^{i\theta}\Omega)$ also gives a calibration on $M$ 
tamed by $\vol_J$ for any constant $\theta$.

\begin{remark}
The standard definition of calibration uses $\vol_g$, thus a Riemannian structure on $M$. It is interesting to notice that $\vol_J$ is defined using only complex data: $J$, $\Omega$ and $h$ on $K_M$. It follows that this notion of calibration does not need a Riemannian structure on $M$.
\end{remark}

We will say that an oriented totally real submanifold $L$ is $J$-\textit{calibrated} if $\iota^*\Ree(\Omega)=\vol_J$. We say that it is \textit{special totally real} (STR) if it is $J$-calibrated by $\Ree(e^{i\theta}\Omega)$, for some $\theta$.

Using these definitions we have the following analogue of Lemma \ref{l:equiv_SL}.

\begin{lem}\label{l:equiv_STR}
 Let $M$ be a CY manifold and $\iota:L\rightarrow M$ be an immersion with $L$ connected. The following are equivalent characterizations of the STR condition:
 \begin{itemize}
  \item[(a)] $\iota^*\Ree(e^{i\theta}\Omega)\equiv \vol_J$ (for some $\theta$);
  \item[(b)] $\iota^*\Imm(e^{i\theta}\Omega)\equiv 0$ (for some $\theta$);
  \item[(c)] the angle $\theta_L$ is constant;
  \item[(d)] $L$ is a critical point for the $J$-volume.
 \end{itemize}
\end{lem}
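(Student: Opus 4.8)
The plan is to follow the template of Lemma~\ref{l:equiv_SL}, reducing each of the four conditions to the constancy of the angle function $\theta_L$ and then invoking Corollary~\ref{cor:minimal_angle}. The one computational input I would record first is a pull-back formula. By Lemma~\ref{l:same_angle} we have $\Omega=e^{i\theta_L}\Omega_L$ along $L$, and since the restriction of the canonical section $\Omega_L$ to $T_pL$ is by construction the real-valued form $\vol_J$, pulling back gives
\begin{equation*}
\iota^*\Omega=e^{i\theta_L}\,\vol_J .
\end{equation*}
Taking real and imaginary parts then yields $\iota^*\Ree(e^{i\theta}\Omega)=\cos(\theta+\theta_L)\,\vol_J$ and $\iota^*\Imm(e^{i\theta}\Omega)=\sin(\theta+\theta_L)\,\vol_J$.

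The structural fact that makes the argument run is that on a totally real submanifold $\vol_J=\rho_J\,\vol_g$ with $\rho_J>0$ (cf.\ the discussion preceding Lemma~\ref{Lag.Jvol.lem}), so $\vol_J$ is a nowhere-vanishing volume form and I may divide the two displayed identities through by it. Condition~(a) then reads $\cos(\theta+\theta_L)\equiv1$, equivalently $e^{i(\theta+\theta_L)}\equiv1$, so that $e^{i\theta_L}\equiv e^{-i\theta}$ is constant; condition~(b) reads $\sin(\theta+\theta_L)\equiv0$, equivalently $e^{i(\theta+\theta_L)}\equiv\pm1$, and since this continuous $\mathcal{S}^1$-valued function is then locally constant, connectedness of $L$ forces it to be globally constant, again making $e^{i\theta_L}$ constant. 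Either way $\theta_L$ is constant, giving (a)$\Rightarrow$(c) and (b)$\Rightarrow$(c); conversely, if $\theta_L$ is constant I simply take $\theta=-\theta_L$, which makes $\cos(\theta+\theta_L)=1$ and $\sin(\theta+\theta_L)=0$ simultaneously, recovering both (a) and (b). This establishes (a)$\Leftrightarrow$(b)$\Leftrightarrow$(c). Finally, (c)$\Leftrightarrow$(d) is exactly Corollary~\ref{cor:minimal_angle}, which identifies the critical points of $\Vol_J$ among totally real submanifolds as those with constant angle.

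There is no genuine analytic obstacle here: the weight of the statement is carried by the earlier results, Lemma~\ref{l:same_angle} and Corollary~\ref{cor:minimal_angle}, together with the positivity of $\rho_J$. The only point demanding care is the interplay between orientation and the existential phase $\theta$: a priori (b) is weaker than (a), since it also permits $\iota^*\Ree(e^{i\theta}\Omega)=-\vol_J$ rather than $+\vol_J$. The equivalence survives precisely because $\theta$ is quantified existentially, so an unwanted sign is absorbed by shifting $\theta$ by $\pi$; it is also exactly here that the connectedness hypothesis on $L$ is needed, to rule out $e^{i(\theta+\theta_L)}$ jumping between $+1$ and $-1$ on different components. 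I would make this bookkeeping explicit, mirroring the corresponding discussion in Lemma~\ref{l:equiv_SL}. For completeness I would also remark, using Lemma~\ref{Omega.ineq.lem}, that each $\Ree(e^{i\theta}\Omega)$ is a calibration tamed by $\vol_J$, so that the STR submanifolds are precisely those attaining equality, consistent with the interpretation of STR as $J$-calibrated.
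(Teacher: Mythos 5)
Your proof is correct and follows essentially the same route as the paper: the whole lemma rests on Lemma~\ref{l:same_angle} (giving $\iota^*(e^{i\theta}\Omega)=e^{i(\theta+\theta_L)}\vol_J$, with $\vol_J$ nowhere vanishing on a totally real immersion) together with Corollary~\ref{cor:minimal_angle} for (c)$\Leftrightarrow$(d). The only organizational difference is that the paper chains (a)$\Leftrightarrow$(b) through the equality case of the calibration inequality in Lemma~\ref{Omega.ineq.lem}, whereas you reduce (a) and (b) independently to (c); your explicit handling of the sign ambiguity and the use of connectedness in (b)$\Rightarrow$(c) is a welcome clarification of a point the paper leaves implicit.
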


\begin{proof}
Condition (a) is the definition of STR.  The equivalence of (a) and (b) is a consequence of Lemma \ref{Omega.ineq.lem}.  Lemma 
\ref{l:same_angle} implies the equivalence of (b) and (c).  The equivalence of (c) and (d) is Corollary \ref{cor:minimal_angle}.
\end{proof}

\begin{remark}
STR submanifolds were introduced in the special case of $\C^n$ in \cite{Bor}. Examples are given therein of STR submanifolds which are not SL.
\end{remark}

As usual in the context of calibrations, our interest in this class of submanifolds lies in the following calculation. 

\begin{lem}\label{STRmin.lem}
Compact STR submanifolds minimize the $J$-volume in their homology class. 
Furthermore, if $L$ is compact STR and $\Vol_J(L')=\Vol_J(L)$ for some other $L'\in [L]$, then $L'$ is also STR.
\end{lem}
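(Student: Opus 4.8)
The plan is to run the classical Harvey--Lawson calibration argument, systematically replacing the Riemannian volume by the $J$-volume and the comass bound by the taming inequality of Lemma \ref{Omega.ineq.lem}. First I would normalize the phase: since $L$ is STR it is $J$-calibrated by $\Ree(e^{i\theta_0}\Omega)$ for some constant $\theta_0$, and replacing $\Omega$ by $e^{i\theta_0}\Omega$ leaves both the Calabi--Yau structure and the inequality of Lemma \ref{Omega.ineq.lem} unchanged. I may therefore assume $\iota^*\Ree(\Omega)=\vol_J$ on $L$. Recall also that $\Ree(\Omega)$ is closed, since $\Omega$ is parallel.

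With this normalization the minimization is immediate. For any compact oriented $L'$ representing the class $[L]$ I would write
$$\Vol_J(L)=\int_L\vol_J=\int_L\Ree(\Omega)=\int_{L'}\Ree(\Omega)\le\int_{L'}\vol_J=\Vol_J(L').$$
Here the first equality is the $J$-calibration of $L$, the third is Stokes' theorem for the closed form $\Ree(\Omega)$ on a chain bounding $L-L'$ (using compactness and $[L]=[L']$), and the inequality is the pointwise taming bound $(\iota')^*\Ree(\Omega)\le\vol_J$ of Lemma \ref{Omega.ineq.lem}, integrated over $L'$. By the remark following Lemma \ref{Omega.ineq.lem} this bound persists at partially complex points, where both sides vanish, so the estimate does not require $L'$ to be totally real.

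For the equality statement, take $L'$ totally real (a competitor in $\mathcal{T}$) with $\Vol_J(L')=\Vol_J(L)$. Then the displayed chain collapses to equalities, so in particular
$$\int_{L'}\big(\vol_J-\Ree(\Omega)\big)=0.$$
By Lemma \ref{Omega.ineq.lem} the integrand is a nonnegative multiple of the volume form of $L'$; having zero integral, it vanishes identically, whence $(\iota')^*\Ree(\Omega)=\vol_J$ at every point. This is exactly the $J$-calibration condition for $L'$, which by Lemma \ref{l:equiv_STR} forces the angle $\theta_{L'}$ to be constant; hence $L'$ is STR.

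The argument is essentially routine, so the points needing attention are bookkeeping rather than genuine obstacles. The first is to pin down the category in which the homology is taken (compact oriented submanifolds, or integral currents) so that $L-L'$ bounds and the closed form $\Ree(\Omega)$ integrates equally over homologous cycles; this is the only step where compactness is essential. The second, in the equality case, concerns the status of $L'$: the pointwise identity $(\iota')^*\Ree(\Omega)=\vol_J$ holds trivially wherever $L'$ is partially complex, so to conclude that $L'$ is STR in the literal sense one uses that $L'$ is totally real, on whose locus the identity pins the totally-real angle and Lemma \ref{l:equiv_STR} applies.
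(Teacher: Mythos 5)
Your proof is correct and follows essentially the same route as the paper: the standard calibration argument with $\vol_J$ in place of $\vol_g$, using the taming inequality of Lemma \ref{Omega.ineq.lem} and Stokes' theorem for the closed form $\Ree(e^{i\theta}\Omega)$. The paper's proof is just the displayed chain of (in)equalities with "the result follows"; your spelled-out treatment of the equality case and of the status of partially complex competitors fills in exactly what that phrase elides.
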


\begin{proof}
Assume $L$ is compact and STR.   Then, for any compact $L'$ homologous to $L$,
\begin{equation*}
 \int_L\vol_J=\int_L\Ree(e^{i\theta}\Omega)=\int_{L'}\Ree(e^{i\theta}\Omega)\leq\int_{L'}\vol_J.
\end{equation*}
The result follows.
\end{proof}

\begin{remark}
One can also define STR submanifolds in the weaker setting of almost Hermitian manifolds endowed with a global non-zero $(n,0)$-form $\Omega$, normalized to have length 1 and such that $\Ree(\Omega)$ is closed. However, since STR submanifolds have $\Imm(e^{i\theta}\Omega)|_L=0$ we see that there is an obstruction to the local existence of STR submanifolds if $\d\Imm(\Omega)\neq 0$.  
Hence, it is most natural to study STR submanifolds in the situation where $\Omega$ is closed, which forces $J$ to be integrable and $\Omega$ to be 
holomorphic.  Hence, $M$ must be Calabi--Yau.
\end{remark}

Lemma \ref{STRmin.lem} states that STR submanifolds  minimize $\Vol_J$.  However, partially complex submanifolds have $\Vol_J=0$ so we deduce the following.  

\begin{prop}\label{dichom.prop}
Let $\iota:L\rightarrow M$ be a compact totally real submanifold in a Calabi--Yau manifold $M$.  Then the following statements are mutually exclusive:
\begin{itemize}
\item[(a)] $[L]$ contains an STR submanifold;
\item[(b)] $[L]$ contains a partially complex submanifold.
\end{itemize} 
\end{prop}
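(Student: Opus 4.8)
The plan is to argue by contradiction, exploiting the homological invariance of the closed calibrating form $\Ree(e^{i\theta}\Omega)$. Suppose, contrary to the claim, that $[L]$ contained both a compact STR submanifold $L_1$, calibrated by $\Ree(e^{i\theta}\Omega)$ for some fixed $\theta$, and a compact partially complex submanifold $L_2$. Since $M$ is Calabi--Yau, $\Omega$ is parallel and hence $\Ree(e^{i\theta}\Omega)$ is closed; as $L_1$ and $L_2$ both represent the class $[L]$, Stokes' theorem gives
$$\int_{L_1}\Ree(e^{i\theta}\Omega)=\int_{L_2}\Ree(e^{i\theta}\Omega).$$

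I would then evaluate each side separately. On the left, the STR condition (Lemma \ref{l:equiv_STR}(a)) means precisely $\iota^*\Ree(e^{i\theta}\Omega)=\vol_J$ on $L_1$, so the left-hand side equals $\Vol_J(L_1)$. Since an STR submanifold is in particular totally real, $\vol_J$ is a strictly positive volume form on $L_1$, whence $\Vol_J(L_1)>0$. On the right, each tangent plane $\pi$ of $L_2$ is partially complex, hence contains a complex line $\{X,JX\}$; evaluating the $(n,0)$-form $\Omega$ on a basis of $\pi$ beginning with such a pair, and using that $\Omega$ is alternating and complex linear, forces $\Omega_{|\pi}=0$. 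Thus $\Ree(e^{i\theta}\Omega)$ restricts to zero on $L_2$ and the right-hand side vanishes. Comparing the two sides gives $0<\Vol_J(L_1)=0$, a contradiction, so (a) and (b) cannot hold simultaneously.

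The computation is short once these ingredients are assembled; the one step I regard as the crux is the vanishing $\Omega_{|\pi}=0$ on partially complex planes. This is precisely the linear-algebra fact underlying the canonical-bundle characterization of Section \ref{ss:canonical_data}: a plane fails to be totally real exactly when some nonzero element of $K_M$ restricts to zero on it, and for a partially complex plane this holds for \emph{every} $(n,0)$-form, in particular for $\Omega$. I would record it explicitly by noting that if $\pi$ contains $\{X,JX\}$ then, expanding $\Omega_{|\pi}$ on a basis starting with $X,JX$, complex linearity in the second slot gives $\Omega(X,JX,\dots)=i\,\Omega(X,X,\dots)=0$. With this in hand the orientation conventions of Lemma \ref{Omega.ineq.lem} play no role, since we only need the integrand on $L_2$ to vanish identically.
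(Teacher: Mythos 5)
Your argument is correct and is essentially the one the paper gives: the paper deduces the proposition from Lemma \ref{STRmin.lem} together with the fact that partially complex submanifolds have vanishing $J$-volume, and then explicitly notes the alternative phrasing you use, namely that $\Omega$ (hence $\Ree(e^{i\theta}\Omega)$) restricts to zero on partially complex planes, so the Stokes comparison $\Vol_J(L_1)=\int_{L_1}\Ree(e^{i\theta}\Omega)=\int_{L_2}\Ree(e^{i\theta}\Omega)=0$ contradicts positivity of $\vol_J$ on a totally real submanifold. Your linear-algebra justification of $\Omega_{|\pi}=0$ on partially complex planes is exactly the canonical-bundle characterization from Section \ref{ss:canonical_data}, so nothing is missing.
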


Notice that the above follows alternatively from the fact that, on a partially complex submanifold, $\Omega$ and thus $\Ree(e^{i\theta}\Omega)$ vanishes. 

The following corollary refers to a particular subclass of CY manifolds: those which are hyperk\"ahler, \textit{i.e.}~have holonomy contained in $\Sp(n)\subset \SU(2n)$.

\begin{cor}
\begin{itemize}
\item[(a)] Let $L$ be a compact special Lagrangian in a Calabi--Yau manifold.  Then $L$ is not homologous to
a partially complex submanifold.
\item[(b)] Let $L$ be a compact complex submanifold of (complex) dimension $n$ in a hyperk\"ahler $4n$-manifold.  Then $L$ is not 
homologous to a special Lagrangian.
\end{itemize}
\end{cor}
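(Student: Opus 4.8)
The plan is to derive both statements directly from the dichotomy established in Proposition \ref{dichom.prop}, which asserts that in a Calabi--Yau manifold a single homology class cannot simultaneously contain an STR submanifold and a partially complex one. Each part then reduces to a single identification: in (a), that a special Lagrangian is in particular STR; in (b), that a complex submanifold is in particular partially complex. Once these are in place, both conclusions follow formally.

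For part (a), I would first observe that the special Lagrangian condition is stronger than the STR condition. Comparing Lemma \ref{l:equiv_SL} with Lemma \ref{l:equiv_STR}, the SL characterization $\iota^*\overline{\omega}\equiv 0$ together with $\iota^*\Imm(e^{i\theta}\Omega)\equiv 0$ immediately yields condition (b) of Lemma \ref{l:equiv_STR}, so every special Lagrangian is STR. Thus $[L]$ contains an STR representative, and Proposition \ref{dichom.prop} rules out any partially complex representative; in particular $L$ is not homologous to a partially complex submanifold.

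For part (b), the first step is to fix the Calabi--Yau structure to which Proposition \ref{dichom.prop} will be applied. A hyperk\"ahler $4n$-manifold has holonomy contained in $\Sp(n)\subseteq\SU(2n)$, so endowing it with one of its complex structures $I$ makes it Calabi--Yau of complex dimension $2n$, with a parallel holomorphic volume form $\Omega$ (obtained, for instance, as a top power of the holomorphic symplectic form $\overline{\omega}_J+i\overline{\omega}_K$). In this description a half-dimensional submanifold has real dimension $2n$. The submanifold $L$, being complex of complex dimension $n$ with respect to $I$, has real dimension exactly $2n$, and each of its tangent spaces is $I$-invariant, hence contains a complex line $\{X,IX\}$. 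Therefore $L$ is partially complex in the sense of Section \ref{s:totally_real}. Proposition \ref{dichom.prop} then forbids $[L]$ from containing any STR submanifold, and since special Lagrangians are STR (as noted in part (a)), no representative of $[L]$ can be special Lagrangian.

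The computations here are essentially bookkeeping, and I do not anticipate a genuine obstacle. The one point requiring care is the dimension count in the hyperk\"ahler setting: one must check that a complex submanifold of complex dimension $n$ is indeed a \emph{half}-dimensional (hence partially complex) submanifold of the $4n$-manifold viewed as a complex $2n$-fold, and that the special Lagrangians under consideration are taken with respect to the same $I$-induced Calabi--Yau structure rather than some other complex structure of the hyperk\"ahler family. With these identifications made explicit, both conclusions are immediate consequences of Proposition \ref{dichom.prop}.
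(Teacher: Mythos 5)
Your proposal is correct and follows exactly the route the paper intends: the corollary is stated without proof as an immediate consequence of Proposition \ref{dichom.prop}, via the two identifications you make explicit (special Lagrangian $\Rightarrow$ STR by comparing Lemmas \ref{l:equiv_SL} and \ref{l:equiv_STR}, and a complex $n$-dimensional submanifold of a hyperk\"ahler $4n$-manifold is a half-dimensional partially complex submanifold for the chosen complex structure). Your remark about fixing the same complex structure $I$ for both the complex submanifold and the special Lagrangian condition is a worthwhile clarification that the paper leaves implicit.
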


The first of these two statements has an interesting consequence.

\begin{prop}\label{prop:unstable}
Let $L$ be a compact oriented Lagrangian in a Calabi--Yau manifold $M$.  If $[L]$ contains a partially complex submanifold, then Lagrangian 
mean curvature flow starting at $L$ cannot converge to a special Lagrangian.
\end{prop}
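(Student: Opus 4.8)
The plan is to argue by contradiction, reducing the claim to the homological dichotomy of Proposition \ref{dichom.prop}. So suppose that $[L]$ contains a partially complex submanifold and, seeking a contradiction, that Lagrangian mean curvature flow $\iota_t$ with $\iota_0=\iota$ converges to a special Lagrangian $L_\infty$. Since $M$ is Calabi--Yau, hence K\"ahler--Einstein with $\lambda=0$, the first part of Theorem \ref{main.LMCF.thm} ensures that each $L_t$ is Lagrangian, so ``Lagrangian mean curvature flow'' is indeed well-defined here; in particular $L=L_0$ is a compact totally real submanifold, which is the setting required to apply Proposition \ref{dichom.prop} to the class $[L]$.

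First I would record that every special Lagrangian is STR. By Lemma \ref{l:equiv_SL} an SL submanifold is Lagrangian with constant Lagrangian angle, and by Lemma \ref{l:equiv_STR} constancy of $\theta_L$ is exactly the STR condition; equivalently, on a Lagrangian one has $\vol_J=\vol_g$ by Lemma \ref{Lag.Jvol.lem}, so the SL calibration identity $\iota^*\Ree(e^{i\theta}\Omega)\equiv\vol_g$ coincides with the $J$-calibration identity $\iota^*\Ree(e^{i\theta}\Omega)\equiv\vol_J$ defining STR. Hence the limit $L_\infty$ is STR.

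Next I would check that $L_\infty$ represents the class $[L]$. The maps $\iota_t$ are all homotopic to $\iota_0$, so $\iota_{t*}=\iota_{0*}$ on homology and $[\iota_t(L)]=[L]$ for every finite $t$; provided the convergence $\iota_t\to\iota_\infty$ is fine enough to preserve the homology class, which is implicit in the requirement that the limit be a smooth special Lagrangian submanifold, this passes to the limit to give $[L_\infty]=[L]$. Therefore $[L]$ contains the STR submanifold $L_\infty$, i.e.~alternative (a) of Proposition \ref{dichom.prop} holds, while alternative (b) holds by hypothesis. This contradicts the mutual exclusivity of (a) and (b) in Proposition \ref{dichom.prop}, and the proof is complete.

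The one genuinely delicate point is the passage to the limit: one needs the homology class to survive, which is why the statement is phrased in terms of convergence to an honest smooth special Lagrangian rather than to a weak (varifold or current) limit that might carry a different class. Everything else is bookkeeping on top of Proposition \ref{dichom.prop}, whose proof already encodes the essential geometry via the $J$-calibration inequality of Lemma \ref{Omega.ineq.lem} and the vanishing of $\Ree(e^{i\theta}\Omega)$ on partially complex planes. In this sense the present proposition is a clean corollary of the $J$-volume calibration developed above.
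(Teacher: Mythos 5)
Your argument is correct and is exactly the intended one: the paper states this proposition as an immediate consequence of part (a) of the preceding corollary (a compact special Lagrangian is not homologous to a partially complex submanifold), which is itself the dichotomy of Proposition \ref{dichom.prop} applied via the fact that special Lagrangians are STR. Your write-up simply makes explicit the two steps the paper leaves implicit — that SL implies STR and that the homology class survives the limit — so it matches the paper's approach.
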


\subsection{Possible developments}\label{ss:CYdevelopments}

Let $\iota:L\rightarrow M$ be a Lagrangian submanifold in a CY manifold $M$. Clearly, $[\iota^*\overline{\omega}]=0\in H^2(L;\R)$. Let us also assume  $[\iota^*\Imm\Omega]=0\in H^n(L;\R)$. Ideally, under MCF such a Lagrangian will converge to an SL submanifold $\iota_\infty:L\rightarrow M$. 

In practice however this will generally not happen, both because of the development of singularities and because of possible further obstructions to the existence of an SL in the given homology class. The first of these issues might be solved using Lagrangian surgery near the singularity to create a new smooth  Lagrangian from which to restart the flow: notice that this could however change the topology of $L$. The second issue is currently still mysterious, and is conjectured to be related to some notion of ``stability'' of the given homology class. 

The simplest example of such  a stability condition is a consequence of Lemma \ref{l:equiv_SL}(a), which implies the need for yet another initial homological assumption:  $\int_
L\Ree\Omega>0$. A more elaborate notion of stability, motivated by Mirror Symmetry, appears in \cite{Thomas}; in \cite{ThomasYau} this is conjectured to be related to the long time existence of Lagrangian MCF, with convergence to an SL submanifold. Further notions of stability for Lagrangians and relations with Lagrangian MCF are discussed in \cite{Joy}.

It is interesting to speculate whether the Maslov flow (or equivalently $J$-mean curvature flow) of totally real submanifolds can be useful towards this programme. The following idea indicates a possible link between Lagrangian MCF and the notion of stability for SLs, the Maslov flow and STR submanifolds, and the work of Donaldson \cite{Don}.

Assume the initial Lagrangian $\iota$ develops a singularity under MCF. Rather than using surgery to restart the flow we could try to bypass this problem by replacing the initial condition $\iota$ with a perturbed immersion $\iota':L\rightarrow M$. This immersion would be totally real, and under the Maslov flow would ideally converge to an STR. Of course in general the Maslov flow might also develop singularities; however, one might at least hope that any ``non-essential'' singularity, arising from a bad choice of $\iota'$ rather than from intrinsic issues and thus not requiring a topology change, might be avoided via a generic choice of $\iota'$. 

The key point here would be the fact that, by relaxing the initial geometric assumptions on the immersion from Lagrangian to totally real, we would gain access to a much larger class of ``generic'' initial data. Notice that in this context our initial assumption $\int_L\Ree\Omega>0$ can be seen as a manifestation of Lemma \ref{l:equiv_STR} 
and Proposition \ref{dichom.prop}. In this process we would hope that the evolution of $\iota'$ does not stray too far from the evolution of $\iota$: the fact that the tensor $\omega$ is preserved under the flow may be some indication of this.

Let us thus assume that $\iota'$ has converged to an STR immersion $\iota''$. We now need a second geometric flow evolving $\iota''$ towards an SL. It turns out that such a flow exists: it is discussed in \cite{Don}. The notion of STR does not appear there but fits in nicely. We summarize the idea as follows.
\begin{itemize}
 \item The space of immersions $\mathcal{M}:=\{\iota:L\rightarrow M\}$ can be viewed as an infinite-dimensional manifold; its tangent space at $\iota$ is $T_\iota\mathcal{M}:=\Lambda^0(\iota^*TM)$, and thus inherits a complex structure from $M$. The integrability of $J$ on $M$ implies $\mathcal{M}$ is formally a holomorphic manifold.
 \item A priori $\mathcal{M}$ has no symplectic structure. Any choice of volume form $\sigma$ on $L$ will however induce a symplectic form $\overline{\sigma}$ on $\mathcal{M}$ defined as follows:
 $$\overline\sigma[\iota](X,Y):=\int_L\iota^*\overline{\omega}(X,Y)\sigma.$$
 This form is formally closed and compatible with the complex structure, so now $\mathcal{M}$ is K\"ahler.
 \item Choose a complex volume form $\Omega_0$ on $L$. Consider the subset $$\mathcal{M}_{\Omega_0}:=\{\iota:L\rightarrow M: \iota^*\Omega=\Omega_0\}.$$ A vector $X\in T_\iota\mathcal{M}$ is tangent to $\mathcal{M}_{\Omega_0}$ if and only if $\iota^*\mathcal{L}_X\Omega=0$; this is equivalent to $\iota^*\mathcal{L}_{JX}\Omega=0$, showing that $\mathcal{M}_{\Omega_0}$ is a complex submanifold of $\mathcal{M}$, thus K\"ahler. In particular, if $\Omega_0=\sigma$ is a real form then $\iota^*\Imm\Omega=0$ so $\mathcal{M}_{\sigma}$ is a subset of the space of STR immersions defined by the pointwise volume constraint $\iota^*\Ree(\Omega)=\sigma$. 
 \item As above, $\mathcal{M}_\sigma$ is K\"ahler. There is a natural action of the space of volume-preserving diffeomorphisms $G:=\Diff(L;\sigma)$, given by reparametrization of the immersions. Donaldson shows that this action is Hamiltonian, thus endowed with a moment map $\mu:\mathcal{M}_\sigma\rightarrow Lie(G)^*$. The zero set of this moment map is the space of Lagrangian submanifolds in $\mathcal{M}_\sigma$, thus SLs. Donaldson suggests that one might use the negative gradient flow of $|\mu|^2$ to locate SLs. This frames the existence problem of SLs into a standard setting for stability conditions: we refer to \cite{Don} for details.
\end{itemize}
Our idea is now clear: given the first limiting STR immersion $\iota''$, we choose $\sigma$ to be the induced $J$-volume form on $L$. This defines our space $\mathcal{M}_\sigma$ and the corresponding moment map; we can run the gradient flow of $|\mu|^2$, with initial data $\iota''$, to find a SL. This will live in the initially given homology class.

It is currently not clear if replacing Lagrangian MCF with this combination of Maslov flow and moment map flow really represents a technical improvement; however, it does serve to indicate a circle of ideas and possible relationships.

\paragraph{Moduli spaces.} It is well-known, cf.~\cite{Pacini} for a comprehensive review, that SL submanifolds generate smooth finite-dimensional moduli spaces. This is a direct consequence of the ellipticity of the coupled conditions  $\iota^*\overline{\omega}\equiv 0$, $\iota^*\Imm(e^{i\theta}\Omega)\equiv 0$. The condition defining STR submanifolds is not elliptic, so moduli spaces will not be finite-dimensional. The fact that the Maslov flow preserves $\omega=\iota^*\overline{\omega}$ indicates that it may be interesting to couple the STR condition with other conditions on $\omega$ and to study the corresponding moduli spaces. 

It is not known if SL moduli spaces are connected within a given homology class. 
It may be that the convexity property of the $J$-volume, and the existence of ``larger'' moduli spaces of STR submanifolds, will play a role in this 
direction. In particular, notice the following construction. 

Suppose we start with a special Lagrangian $\iota:L\rightarrow M$ 
and perturb it slightly to become totally real $\iota':L\rightarrow M$, but not
 Lagrangian. The stability of critical points makes us expect that the flow will exist for all time and converge to a smooth submanifold. 
 However, Proposition \ref{limit.prop} shows that this submanifold will not be Lagrangian; it will also not be partially complex 
 by Proposition \ref{dichom.prop}. We thus expect that $L'$ will flow 
to an STR submanifold.  Applying this construction to a family $\iota'_t$ of initial submanifolds converging to $\iota$, we construct a limiting family of 
STR submanifolds converging to $\iota$. This suggests that any SL submanifold should arise as a limit of a family of STR submanifolds.

\paragraph{Graphs.} Suppose we have symplectic manifolds $(M_1,\omega_1)$ and $(M_2,\omega_2)$ and we define
 $(M,\omega)$ to be the symplectic manifold with $M=M_1\times M_2$ and $\omega=\pi_1^*\omega_1-\pi_2^*\omega_2$, where $\pi_j:M\to M_j$ are the obvious projections.  Then, as is well-known, 
 the graph of a map $F:(M_1,\omega_1)\to (M_2,\omega_2)$ in $(M,\omega)$ is Lagrangian if and only if $F^*\omega_2=\omega_1$. This observation is used in \cite{MedosWang} to study the problem of deforming symplectomorphisms via Lagrangian mean curvature flow.
 
The totally real analogue of this situation is to consider almost complex manifolds $(M_1,J_1)$ and $(M_2,J_2)$ and define $(M,J)$ to 
be the almost complex manifold with $M=M_1\times M_2$ and 
$J=(J_1,-J_2)$.  If we have a map $F:(M_1,J_1)\to (M_2,J_2)$ then its graph is totally real if and only if whenever
$$J(X,F_*(X))=(Y,F_*(Y))$$
for tangent vectors $X,Y$ on $M_1$ then $X=Y=0$.  This equation becomes
$$J_1(X)=Y\quad\text{and}\quad -J_2\circ F_*(X)=F_*(Y)=F_*\circ J_1(X),$$
and so the graph is totally real if and only if 
$$J_2\circ F_*+F_*\circ J_1$$
is injective.  This is clearly an open condition, as one would expect.  The Maslov flow and $J$-mean curvature flow thus give tools for 
studying this space of maps.

In the case when $F:\C^n\to\C^n$ (or even from $\R^n$ to itself), we see that its graph is STR, and thus a critical point for the flow, if and only if 
$$\Imm \,\text{det}_{\C}(I+iF_*)=0\quad\text{and}\quad \Ree \,\text{det}_{\C}(I+iF_*)>0.$$
These are the same equations as for special Lagrangian graphs, except here we remove the condition that $F$ is given by the gradient of a scalar function.

\bibliographystyle{amsplain}
\bibliography{CFCLagr}

\end{document}